\newcommand{\Px}{ \mathbb{P} }
\newcommand{\Ex}{ \mathbb{E} }
\def\esssup_#1{\underset{#1}{\mathrm{ess\,sup\, }}}
\def\essinf_#1{\underset{#1}{\mathrm{ess\,inf\, }}}
\def\argmax_#1{\underset{#1}{\mathrm{arg\,max\, }}}
\def\argmin_#1{\underset{#1}{\mathrm{arg\,min\, }}}
\newcommand{\Fx}{\mathbb{F} }
\newcommand{\F}{\mathcal{F}}
\newcommand{\R}{\mathds{R}}
\newcommand{\C}{{\tt{C}}}
\newtheorem{theorem}{Theorem}[section]
\newtheorem{definition}{Definition}[section]
\numberwithin{equation}{section}
\newtheorem{proposition}[theorem]{Proposition}
\newtheorem{remark}[theorem]{Remark}
\newtheorem{lemma}[theorem]{Lemma}
\newtheorem{corollary}[theorem]{Corollary}
\definecolor{Red}{rgb}{1.00, 0.00, 0.00}
\definecolor{DRed}{rgb}{0.5, 0.00, 0.00}
\definecolor{Blue}{rgb}{0.00, 0.00, 1.00}
\definecolor{Green}{rgb}{0.0, 0.4, 0.0}
\title{ Mean Field Game of Optimal Tracking Portfolio}
\author{Lijun Bo \thanks{Email: lijunbo@ustc.edu.cn, School of Mathematics and Statistics, Xidian University, Xi'an, 710126, China.}
\and
Yijie Huang \thanks{Email: yijie.huang@polyu.edu.hk, Department of Applied Mathematics, The Hong Kong Polytechnic University, Kowloon, Hong Kong, China.}
\and
Xiang Yu \thanks{Email: xiang.yu@polyu.edu.hk, Department of Applied Mathematics, The Hong Kong Polytechnic University, Kowloon, Hong Kong, China.}
}
\date{\vspace{-0.5in}}
\begin{document}
\maketitle

\begin{abstract}
This paper studies the mean field game (MFG) problem arising from a large population competition in fund management, featuring a new type of relative performance via the benchmark tracking.  In the $n$-player model, each agent aims to minimize the expected largest shortfall of the wealth with reference to the benchmark process, which is modeled by a linear combination of the population's average wealth process and a market index process. With a continuum of agents, we formulate the MFG problem with a reflected state process. We establish the existence of the mean field equilibrium (MFE) using the  partial differential equation (PDE) approach. Firstly, by applying the dual transform, the best response control of the representative agent can be characterized in analytical form in terms of a dual reflected diffusion process. As a novel contribution, we verify the consistency condition of the MFE in separated domains with the help of the duality relationship and properties of the dual process. Moreover, based on the MFE, we construct an approximate Nash equilibrium for the $n$-player game when the number $n$ is sufficiently large.\\
\ \\
\textbf{Keywords}: Mean field game, relative tracking portfolio, mean field equilibrium, dual reflected diffusion process, fixed point     
\end{abstract}
\vspace{0.1in}

\section{Introduction}\label{sec:introduction}
Portfolio  management with benchmark tracking has been an active research topic in quantitative finance, which involves constructing a portfolio that closely tracks a chosen benchmark.  The goal is usually to minimize the tracking error (for instance, the variance or downside variance relative to the index value or return). Some related studies along this direction can be found in \cite{Browne9,Browne99,Browne00}, \cite{Gaivoronski05},  \cite{YaoZZ06}, \cite{Strub18}, \cite{NLFC}, among others.   Recently,  \cite{BoLiaoYu21} proposed a tracking portfolio formulation, termed ``relaxed benchmark tracking", in which an agent is allowed to use (fictitious) capital injection to outperform a benchmark process and aims to minimize the cost of the capital injection. The fictitious injected capital can also be regarded as a risk measure of the expected largest shortfall with reference to the benchmark. Later, this relaxed benchmark tracking formulation has been further generalized to the Merton's consumption problem under different types of benchmark processes in \cite{BHY24a,BHY23,BHY24b} and to the incomplete market model using the continuous-time reinforcement learning approach in \cite{BHY25}.

 In the present paper, we study the relaxed tracking portfolio mean field game (MFG) problem. In particular, we consider a system of $n$ competitive agents who are concerned with their performance relative to the population's average wealth.  When the number of agents goes to infinity, we explore the corresponding MFG problem that is more tractable. Introduced independently by  \cite{Huang06} and \cite{Lasry07}, MFGs have emerged as a significant research area in recent years, serving as a powerful tool for studying  large population games. The existing literature on $n$-player and mean field portfolio games with relative performance mostly consider the utilities  defined by comparing the agent's own wealth to the population's average wealth.  To name a few,
\cite{Espinosa15} established the existence and uniqueness of a Nash equilibrium for an $n$-agent game under general utility functions and portfolio constraints. \cite{Lacker19} discussed both $n$-player and MFG problems of optimal portfolio under competition and constant relative risk aversion (CRRA) or constant absolute risk aversion (CARA) relative performance criteria, deriving explicit solutions. Their work was extended by   \cite{Lacker20}  to include consumption,  and by \cite{FuZhou23} to models with random return rates and volatility. Applying the master equation approach,  \cite{Souganidis24} analyzed a MFG problem with relative performance, deriving the MFG solution and using it to approximate the $n$-player game equilibrium. \cite{BWY24a} examined the equilibrium consumption under linear and multiplicative external habit formation. \cite{BWY24b} considered a relative performance MFG problem with contagious jump risk.

In contrast, we consider a new type of relative performance using the expected largest shortfall with reference to a benchmark that incorporates the population's average wealth and an exogenous market index process. Each agent needs to strategically choose an investment strategy to minimize this new risk measure. See also some related concepts of risk measures such as the conventional expected shortfall with respect to a terminal random variable in Pham \cite{Pham02} and the intra-horizon expected shortfall in Farkas et al. \cite{Farkas21}, and references therein. After some equivalent transformations, we need to solve a type of MFGs with state reflection at zero. The primary interest in  reflected state processes stems from their ability to model hard constraints and boundary behaviors, which are ubiquitous in economic, financial, and physical systems. Unlike soft penalties that can be violated at a cost, reflection imposes a strict, binding constraint that the state cannot cross. Mathematically, this is represented by a reflection term that instantaneously ``pushes'' the state back into the admissible domain whenever it reaches the boundary. Stochastic control of reflected diffusions, stemming from running maximum processes or singular control, has garnered increasing attention in recent research.  To name a few,  \cite{Han16} concerned the optimal pricing barrier control in a regulated financial market system modelled by a reflected diffusion process.  \cite{WeerasingheaZhu16} formulated a Brownian inventory control problem with capacity expansion as an equivalent stochastic control problem with reflection, solvable through analysis of the corresponding Hamilton-Jacobi-Bellman (HJB)  equation.  \cite{Ferrari19} investigated a class of singular control problems for diffusions reflected at zero.  However, the research on MFGs with reflected diffusion process remains relatively unexplored. \cite{Bayraktar2019} formulated a MFG problem with drift-controlled diffusion process with reflections for large symmetric queuing systems in heavy traffic with strategic servers. By using the master equation approach,  \cite{Ricciardi2023} handled the convergence of Nash equilibrium in an $n$-player differential game towards the optimal strategies in the MFG for drift control with reflection. Recently, \cite{BWY25} explored a MFG problem with state-control joint law dependence and the reflected state process, in which the existence of MFEs is established by developing the compactification method and the connections between the MFG and the $n$-player game problems are also discussed.  

In this work, we solve the MFG problem with state reflections by using the analytical approach. In the first step, we fix a deterministic differentiable function that represents the average wealth, and study the optimization problem of the representative agent. We reformulate the control problem with largest shortfall by introducing  an auxiliary controlled state process with reflection. By means of the dual transform, probabilistic representation and stochastic flow analysis, the existence of classical solution to the HJB equation is obtained and the verification theorem on the best response control is proved.  As shown in Theorem \ref{thm:verification}, we emphasize that the optimal portfolio process exhibits path dependence on both the wealth process $V=(V_t)_{t\in[0,T]}$ and the benchmark process $Z=(Z_t)_{t\in[0,T]}$. This inherent path dependence makes the direct analysis based on the original state process $V=(V_t)_{t\in[0,T]}$ analytically intractable. Consequently, the auxiliary reflected state process $X=(X_t)_{t\in[0,T]}$ becomes indispensable, as it facilitates the representation of optimal strategies in feedback form (see Remark \ref{rem:XV}).

Then, we aim to  verify the consistency condition and find an MFE in the second step based on the characterization of the value function and feedback function of the best response control derived in the previous step. It is sufficient to show the existence of the fixed point for the mapping that stems from the consistency condition.  A fundamental challenge is the piecewise expression. The optimal strategy is not in a unified form but it takes different expression across different regions (outperforming vs. underperforming), and the boundaries of the regions also depend on the population's state distribution. This complexity is caused by the interplay of maximum process (largest shortfall), the benchmark tracking, and mean field interactions, transforming the consistency condition into a non-standard fixed point problem. In response, we introduce the reflected dual process, which helps to provide a probability representation of best response portfolio strategy and establish the existence and uniqueness of the fixed point of the dual variable. Accordingly, we return to the primal state variable by using the dual relationship and find the MFE in piecewise form by constructing the fixed point. We split the domain into two regions depending on the model parameters, the initial level of the wealth and benchmark processes and the time horizon: on the outperforming region, we derive the explicit expression of the optimal investment strategy; on the underperforming region, not explicit but calculable representations of the optimal investment strategy is provided. Finally, with the aid of MFE, we construct an approximate Nash equilibrium for the game when the number $n$  is sufficiently large. The flowchart in Figure \ref{fig:chart} outlines the roadmap of our proposed approach, which utilizes the auxiliary dual process.

We summarize the key contributions as threefold. First, we generalize the single-agent framework of \cite{BoLiaoYu21} by formulating a MFG problem that introduces a novel relative performance criterion based on the population’s average wealth process. Second, we solve a new type of MFG problem with state reflections analytically and provide a characterization of one MFE in this new context. Third, we develop a convex dual method based on reflected state processes that enables the rigorous proof of the MFE’s consistency condition through the  analysis of the reflected dual process. We significantly extend the dual method initiated in \cite{BoLiaoYu21} and \cite{BHY24a}, which employed the Legendre-Fenchel transform and stochastic flow analysis to study solution regularity. Subsequent developments in \cite{BHY24b} and \cite{BHY23} introduced reflected dual processes to handle problems with general utility functions and drawdown constraints. In the present work, we advance this methodology further by characterizing the best-response control through a reflected dual process in a mean field game setting.

The remainder of the paper is organized as follows. Section \ref{sec:model} introduces the finite-agent and MFG problems of the optimal tracking portfolio. Section \ref{sec:auxiliary} proposes an auxiliary state process with reflection and derive the associated HJB equation with Neumann boundary condition for the best response control problem, in which the best response portfolio process of the representative agent can be obtained in analytical form in terms of a reflected dual process. Section \ref{sec:MFG}  verifies the consistency condition and establishes the existence of the MFE with the aid of the reflected dual process. Section \ref{sec:numerical} presents some quantitative properties and numerical examples of the MFE. Section \ref{sec:approximate} constructs an approximate Nash equilibrium for the game with a sufficiently large but finite number $n$ of agents. Section \ref{sec:conclusion} summarizes the main results and discusses future research directions. The proofs of main results in previous sections are collected in Section \ref{sec:proofs}.

\begin{figure}
\begin{center}
\begin{tikzpicture}[
    node distance=0.6cm,
    box/.style={draw, rectangle, minimum width=6cm, minimum height=3cm, align=center}
]
    \node[box] (def) {{\bf MFE}:\\$\Ex\big[V_t^{*,f}\big]=\mathrm{v}+\int_0^t f(s)ds$\\[0.4em] (see  Definition \ref{def:MFG})};
    \node[box, right=of def] (xt) {$K_t=\lambda \overline{V}_t+(1-\lambda )Z_t-V_t$\\ $L_t=0\vee \sup_{s\leq t}K_s$\\[0.4em] {\bf Auxiliary process}:\\ $X_t:=L_t-K_t$ \\(see Equation \eqref{state-X})};
    \node[box, below=of xt] (ft) {{\bf Dual process}:\\ $Y_t=u_x(t,X_t^*,Z_t)$\\[0.4em] $dY_t= Y_t(\rho dt-\frac{\mu}{\sigma} dW_t)-dL_t^Y$\\(see Lemma \ref{lem:dual-process})};
    \node[box, left=of ft] (int) {$R_t=-\ln(Y_t)$\\ {\bf Fixed point}:  \\ $f(t)$ for {\bf dual variable} $(r,z)$\\ (see Lemma \ref{lem:fixpoint-r})};

    \draw[-Latex] (def) -- (xt);
    \draw[-Latex] (xt) -- (ft);
    \draw[-Latex] (ft) -- (int);
    \draw[-Latex] (int) -- (def)
     node[midway, right] { Theorem \ref{thm:MF-equilibrium}};
\end{tikzpicture}
\caption{The road map of solving the MFG problem.}\label{fig:chart}
\end{center}
\end{figure}

\section{Finite-Agent and MFG Problem}\label{sec:model}

Let $(\Omega,\F,\Fx,\Px)$ be a filtered probability space with the filtration $\Fx=(\F_t)_{t\in[0,T]}$ satisfying the usual conditions (i.e., $\F_0$ contains all $\Px$-null sets and $\Fx$ is right continuous).  Consider a financial market consisting of $n$ agents and $n$ risky assets, in which the $i$-th agent invests her wealth in the $i$-th risky asset whose price dynamics is given by 
\begin{equation}\label{eq:stock}
  \frac{d S_t^i}{S_{t}^i} = \mu^{i} dt +\sigma^{i} dW_t^i,\quad t\in [0,T].
\end{equation}
Here the return rate $\mu^{i}>0$, the volatility  $\sigma^{i}>0$ and $(W^1,\ldots,W^n)^{\top}=(W_t^1,\ldots,W_t^n)^{\top}_{t\in[0,T]}$ is an $n$-dimensional Brownian motion, specified to each individual risky asset, standing for the idiosyncratic noise. It is assumed that the riskless interest rate $r=0$, which amounts to the change of num\'{e}raire. From this point onwards, all processes including the wealth process and the benchmark process are defined after the change of num\'{e}raire. At time $t\geq0$, let $\theta_t^i$ be the amount of wealth that agent $i$ allocates in the asset $S^i$. Then, the self-financing wealth process of agent $i$ satisfies the following controlled stochastic differential equation (SDE):
 \begin{align}\label{eq:Vt}
dV_t^i=\theta^i_t \mu^{i} dt+\theta^i_t\sigma^{i} dW_t^i,\quad t\in [0,T].
\end{align}

We now introduce the relaxed benchmark tracking problem based on the expected largest shortfall by allowing the wealth process to fall below the given benchmark process. The benchmark process is modeled as a linear combination of the average wealth process $\overline{V}_t^n=\frac{1}{n}\sum_{i=1}^n V^i_t$  and the market index level $Z_t^i$ that agent $i$ is concerned. The interaction of $n$ agents occurs through the average wealth process $\overline{V}^n_t$ which depicts the relative performance concern by agent $i$ with respect to other peers. Herein, the market index process  $Z^i=(Z_t^i)_{t \in [0,T]}$ for $i=1,\ldots,n$ is described as the following geometric Brownian motion (GBM) that $Z_0^i=z\in\R_+:=[0,\infty)$ and
\begin{align}\label{eq:Zt-n}
d Z_t^i=\mu_Z^{i} Z_t^i d t+\sigma_Z^{i} Z_t^i d W_t^{i},
\end{align}
where the return rate $\mu_Z^{i}>0$ and the volatility $\sigma_Z^{i}>0$ satisfying $\frac{\mu_Z^{i}}{\sigma_Z^{i}}>\frac{\mu^{i}}{\sigma^{i}}$. This condition ensures that the market index’s Sharpe ratio is sufficiently high. Economically, this means that the fund manager in our model restricts attention to benchmark processes that exhibit strong market performance. In practice, fund managers naturally prefer benchmarks with high Sharpe ratios, making this assumption economically plausible.

Given the benchmark process $Z^{i}=(Z_t^i)_{t\in[0,T]}$, denote by $A^i=(A_t^i)_{t\in[0,T]}$ the largest shortfall (up to time $t$) when the wealth falls below the benchmark, that is
\begin{align*}
A^{i}_t =0\vee \sup_{s\leq t}\left(\lambda^i \overline{V}_s +(1-\lambda^i)Z_s^i-V_{s}^i\right), ~~\forall t\in [0,T],
\end{align*}
where $\lambda^{i} \in[0,1]$  represents the competition weight of agent $i$ towards her relative performance concern. Then,  $\Ex[A_0^i+\int_0^{T} e^{-\rho t}dA^i_t]$  defines a risk measure on the expected largest shortfall with reference to the benchmark. The agent needs to strategically choose the portfolio control to minimize this risk measure on tracking.

\begin{remark}
 Formally, the model parameters of agent $i$ in the $n$-agent game are given by $(\mu^{i,n}, \sigma^{i,n}, \mu_Z^{i,n}, \sigma_Z^{i,n}, \lambda^{i,n})$. To simplify notation, we will denote these parameters as $(\mu^{i}, \sigma^{i}, \mu_Z^{i}, \sigma_Z^{i}, \lambda^{i})$, omitting the dependence on $n$.
\end{remark}

Our choice of a linear combination of average population wealth and a market index is motivated by its economic interpretation as capturing a relative performance concern that is both social (peer-driven) and market-based. Specifically, the average population wealth component $\lambda^{i}{\overline{V}}_t^n$ captures the agent’s preference to outperform their peer group. For instance,  \cite{Espinosa15} examined optimal investment under relative wealth concerns, where agents maximize a convex combination of their own wealth and its deviation from the average wealth of peers. The market index component $(1-\lambda^{i})Z_t^i$ such as the S\&P 500, NASDAQ, or Dow Jones represents a standard financial benchmark and a common performance target for fund managers. By integrating both components above, our benchmark enables agents to evaluate performance against both direct competitors and the market--a structure aligning with the practical behavior.

Mathematically speaking, for $i=1,\ldots,n$, agent $i$ now aims to minimize the expected largest shortfall at the terminal time $T$, that is, for $(\mathrm{v},z)\in\R_+^2$,
\begin{align}\label{eq:n-player}
 {\rm w}^i(\mathrm{v},z):=\text{$\inf_{\theta^i\in\mathbb{U}} \Ex\left[ A_0^i+\int_0^{T} e^{-\rho t}dA^i_t\Big|V^i_0=\mathrm{v},Z_0^i=z\right]$}
\end{align}
where $\rho>0$ is the discount factor. The admissible control set $\mathbb{U}$ is defined as the set of processes $\theta=(\theta_t)_{t\in[0,T]}$ such that $\theta$ is an $\Fx$-adapted real-valued process satisfying $\Ex[\int_0^T |\theta_t|^2dt]<\infty$. Here, for technical convenience, we assume that the initial wealth of each agent and the initial level of market index process are the same constants $(\mathrm{v},z)\in \R_+^2$.

In the setting of a continuum of agents, we formulate the MFG problem in the limit as $n\rightarrow\infty$. To do it, denote by $\overline{V}=(\overline{V}_t)_{t\in[0,T]}$ the average wealth process of the mean field population.  We assume that, as $n \to \infty$, $(\mu^{i}, \sigma^{i}, \mu_Z^{i}, \sigma_Z^{i}, \lambda^{i})$ converge uniformly to a common limit $(\mu, \sigma, \mu_Z, \sigma_Z, \lambda)\in(0,\infty)^4\times[0,1]$ satisfying  $\frac{\mu_Z}{\sigma_Z}>\frac{\mu}{\sigma}$. When $n\rightarrow\infty$, we expect that $t\mapsto\overline{V}_t$ is a deterministic differentiable function with the form $d\overline{V}_t=f(t)dt$ for some positive { $f\in\C([0,T])$ and $\overline{V}_0=\mathrm{v}\in\R_+$. Here,  $\C([0,T])$ denotes the set of real-valued continuous functions on $[0,T]$. Then, our MFG problem is to find $(\theta^*,A^*)=(\theta_t^*,A_t^*)_{t\in[0,T]}\in\mathbb{U}$ such that the best response satisfies the so-called consistency condition. 

For a given function $t\mapsto f(t)$ (or $t\mapsto\overline{V}_t$), we first consider the stochastic control problem for a representative agent that  
\begin{align}\label{eq:mfg}
 {\rm w}(\mathrm{v},z):=\inf_{\theta\in \mathbb{U}} \Ex\left[ A_0+ \int_0^{T} e^{-\rho t}dA_t\Big|V_0=\mathrm{v},Z_0=z \right].
\end{align}
Here the wealth process $V=(V_t)_{t\in[0,T]}$ of the representative agent satisfies that $V_0=\mathrm{v}\in\R_+$ and 
\begin{align}\label{eq:Vt-MFG}
 dV_t=\theta_t \mu dt+\theta_t\sigma dW_t,~~t\in (0,T].
\end{align}
The market index process $Z=(Z_t)_{t \in [0,T]}$ of the representative agent is given by $Z_0=z\in\R_+$ and 
\begin{align}\label{eq:Zt-MFG}
d Z_t=\mu_Z Z_t d t+\sigma_Z Z_t d W_t,~~t\in(0,T].
\end{align}
The largest shortfall process $A=(A_t)_{t\in[0,T]}$ of the representative agent  is given by
\begin{align*}
A_t =0\vee \sup_{s\leq t}\left(\lambda f(s) +(1-\lambda)Z_s-V_{s}\right), \quad \forall t\in [0,T],
\end{align*}

We next give the definition of the MFE for our MFG problem:
\begin{definition}[MFE]\label{def:MFG}
{\it For a given function $f\in\C([0,T])$, let $\theta^{*,f}=(\theta_t^{*,f})_{t\in[0,T]}\in \mathbb{U}$ be the best response solution to the stochastic control problem \eqref{eq:mfg} for the representative agent. The triplet $(f,\theta^{*,f})$ is called a MFE if  the strategy $\theta^{*,f}$ is the best response to  \eqref{eq:mfg} such that $\Ex[V_t^{*,f}]=\mathrm{v}+\int_0^t f(s)ds$ for all $t\in[0,T]$, where $V^{*,f}=(V_t^{*,f})_{t\in[0,T]}$ is the wealth process \eqref{eq:Vt-MFG} under the portfolio strategy $\theta^{*,f}$.} 
\end{definition}

Based on the above definition, finding an MFE is to solve the following two-step problem:
\begin{itemize} 
\item {\bf Step 1.} For a given deterministic positive function $f\in\C([0,T])$, we solve the stochastic control problem \eqref{eq:mfg} for a representative agent against the fixed environment.
The best response strategy for the representative agent is denoted by $\theta^{*,f},\in \mathbb{U}$ and $V^{*,f}=(V_t^{*,f})_{t\in[0,T]}$ stands for the wealth process in \eqref{eq:Vt-MFG} under the portfolio strategy $\theta^{*,f}=(\theta_t^{*,f})_{t\in[0,T]}$.

\item {\bf Step 2.} We derive an MFE by using the consistency condition, which is equivalent to finding a fixed point $f^*\in\C([0,T])$ taking values on $\R_+$ to the fixed point problem $\Ex[V_t^{*,f}]=\mathrm{v}+\int_0^t f(s)ds$ for all $t\in[0,T]$. Then, the expected MFE is given by $(f^*,\theta^{*,f^*})=(f^*(t),\theta_t^{*,f^*})_{t\in[0,T]}$.
\end{itemize}

\section{Equivalent Problem Formulation for Best Response Control}\label{sec:auxiliary}

To tackle the best response control of the representative agent in problem \eqref{eq:mfg}, we introduce a new controlled state process to replace the original state process. Let $K_t:=\lambda \overline{V}_t+(1-\lambda )Z_t-V_t$ and $L_t:=0\vee \sup_{s\leq t}K_s$ for $t\in[0,T]$. Then, we define a new state process with reflection that $X_t:=L_t-K_t$, and hence 
\begin{align}\label{state-X}
dX_t&=(\theta_t \mu -\lambda f(t)-(1-\lambda)\mu_Z Z_t)dt\\
&\quad+(\theta_t\sigma-(1-\lambda)\sigma_Z Z_t )dW_t+dL_t,~~t\in (0,T]\nonumber
\end{align}
with $X_0=x=((1-\lambda)(\mathrm{v}-z))^+\in\R_+$. In particular, the process $L=(L_t)_{t\in[0,T]}$ is referred to as the local time of $X$, it increases at time $t$ if and only if $X_t=0$, i.e., $L_t=D_t$. We will change the notation from $L_t$ to $L_t^X$ from this point onwards to emphasize its dependence on the new state process $X$ given by \eqref{state-X}.

We then need to solve the following equivalent stochastic control problem, for $(t,x,z)\in[0,T]\times\R_+^2$, 
\begin{align}\label{eq_mfg-2}
&u(t,x,z):=\sup_{\theta\in \mathbb{U}_t} \Ex\left[  -\int_t^{T} e^{-\rho (s-t)}dL_s^X\big|X_t=x,Z_t=z \right],
\end{align}
where the admissible control set $\mathbb{U}_t$ is specified as the set of $\Fx$-adapted real-valued control processes $\theta=(\theta_s)_{s\in[t,T]}$ satisfying $\Ex[\int_t^T |\theta_s|^2ds]<\infty$. Obviously, we have $\mathbb{U}=\mathbb{U}_0$ and the following equivalence between problems \eqref{eq:mfg} and \eqref{eq_mfg-2}:
\begin{align*}
{\rm w}(x,z)=
\begin{cases}
\displaystyle -u(0,(1-\lambda)(\mathrm{v}-z),z),&\text{if}~\mathrm{v}\geq z,\\[0.3em]
\displaystyle -u(0,0,z)+(1-\lambda)(z-\mathrm{v}),&\text{if}~\mathrm{v}< z.
\end{cases}
\end{align*}
It is straightforward to derive the next result:
\begin{lemma}\label{lem:propoertyu}
{\it Let $(t,z)\in[0,T]\times \R_+$ be fixed. Then, the value function $x\mapsto u(t,x,z)$ given by \eqref{eq_mfg-2} is non-decreasing. Moreover, for all $(t,x_1,x_2,z)\in[0,T]\times \R_+^3$, we have
\begin{align}\label{eq:lipvx}
\left|u(t,x_1,z)-u(t,x_2,z)\right|\leq  |x_1-x_2|.
\end{align}}
\end{lemma}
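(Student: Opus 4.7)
\textbf{Proof plan for Lemma \ref{lem:propoertyu}.}

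The plan is to exploit the fact that, once a control $\theta \in \mathbb{U}_t^{\rm r}$ and the path of $Z$ starting from $Z_t=z$ are fixed, the drift and diffusion coefficients of the state equation \eqref{state-X} do not depend on $x$. Writing
\[
M_s^{t,\theta}:=\int_t^s\!\bigl(\theta_u\mu-\lambda f(u)-(1-\lambda)\mu_Z Z_u\bigr)du+\int_t^s\!\bigl(\theta_u\sigma-(1-\lambda)\sigma_Z Z_u\bigr)dW_u,
\]
the (unreflected) skeleton $x+M_\cdot^{t,\theta}$ is common to all initial data, so by the Skorokhod reflection formula applied to \eqref{state-X},
\[
\ell_s^{x}:=L_s^{X^{x}}-L_t^{X^{x}}=\Bigl(-x-\inf_{u\in[t,s]}M_u^{t,\theta}\Bigr)^{+},\qquad s\in[t,T].
\]
From this explicit representation two pointwise properties follow immediately, uniformly in $\theta$ and in $\omega$: (i) if $x_1\le x_2$ then $\ell_s^{x_1}\ge \ell_s^{x_2}\ge 0$; (ii) $|\ell_s^{x_1}-\ell_s^{x_2}|\le |x_1-x_2|$, since $y\mapsto(y)^+$ is $1$-Lipschitz.

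The second ingredient is an integration-by-parts identity for the discounted local time. Because $\ell_\cdot^x$ is nondecreasing and right-continuous with $\ell_t^x=0$, Fubini gives
\[
\int_t^{T}e^{-\rho(s-t)}\,d\ell_s^{x}=e^{-\rho(T-t)}\ell_T^{x}+\rho\int_t^{T}e^{-\rho(s-t)}\ell_s^{x}\,ds,
\]
which is \emph{monotone and $1$-Lipschitz} in the path $\ell_\cdot^x$ (in the sense that replacing $\ell^x$ by a larger, respectively $\varepsilon$-perturbed, nondecreasing path produces a larger, respectively at most $\varepsilon$-perturbed, value, using that $e^{-\rho(T-t)}+\rho\int_t^T e^{-\rho(s-t)}ds=1$).

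Combining (i) with the monotonicity yields $\Ex[-\int_t^T e^{-\rho(s-t)}dL_s^{X^{x_1}}]\le \Ex[-\int_t^T e^{-\rho(s-t)}dL_s^{X^{x_2}}]$ for every $\theta\in \mathbb{U}_t^{\rm r}$; taking the supremum over $\theta$ on both sides delivers the monotonicity $u(t,x_1,z)\le u(t,x_2,z)$. Combining (ii) with the $1$-Lipschitz bound on the functional gives, for every $\theta$,
\[
\Bigl|\Ex\Bigl[-\!\!\int_t^{T}\!e^{-\rho(s-t)}dL_s^{X^{x_1}}\Bigr]-\Ex\Bigl[-\!\!\int_t^{T}\!e^{-\rho(s-t)}dL_s^{X^{x_2}}\Bigr]\Bigr|\le |x_1-x_2|,
\]
and a standard $\sup$-difference argument then yields \eqref{eq:lipvx}. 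The only step that requires any care is the justification of the Skorokhod formula in the present setting, but since the coefficients of \eqref{state-X} are $\Fx$-adapted and independent of $X$, this is the classical one-dimensional Skorokhod map, and no obstacle arises.
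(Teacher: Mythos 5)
Your proof is correct, and it is almost certainly the "straightforward" argument the authors had in mind when they stated the lemma without proof. The three ingredients you identify are exactly right: since the admissible controls $\theta\in\mathbb{U}_t^{\rm r}$ are merely $\Fx$-adapted processes (not feedback maps of $X$), the coefficients of \eqref{state-X} are genuinely independent of $X$, so for a fixed $\theta$ the one-dimensional Skorokhod map applies pathwise and yields the closed form $\ell^x_s=\bigl(-x-\inf_{u\in[t,s]}M^{t,\theta}_u\bigr)^+$, which is nonincreasing and $1$-Lipschitz in $x$, uniformly in $(\theta,\omega,s)$.

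The one point worth stressing — and you do handle it — is that pointwise comparison of the paths $\ell^{x_1}_\cdot$ and $\ell^{x_2}_\cdot$ does \emph{not} immediately transfer to the Stieltjes integral $\int e^{-\rho(s-t)}\,d\ell_s$, because $d\ell^{x_1}$ need not dominate $d\ell^{x_2}$ as a measure even when $\ell^{x_1}_s\ge\ell^{x_2}_s$ for all $s$. Your integration-by-parts identity
\[
\int_t^{T}e^{-\rho(s-t)}\,d\ell_s=e^{-\rho(T-t)}\ell_T+\rho\int_t^{T}e^{-\rho(s-t)}\ell_s\,ds,
\]
valid because $\ell_t=0$, turns the Stieltjes integral into a positive linear functional of the pointwise values of $\ell$, from which both monotonicity and the $1$-Lipschitz bound follow (the normalizing identity $e^{-\rho(T-t)}+\rho\int_t^T e^{-\rho(s-t)}\,ds=1$ gives constant exactly $1$). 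Combined with the standard estimate $|\sup_\theta a_\theta-\sup_\theta b_\theta|\le\sup_\theta|a_\theta-b_\theta|$, the argument is complete. No gaps.
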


Denote by $\mathcal{D}_T:=[0,T)\times \R_+^2$ and $\overline{\mathcal{D}}_T:=[0,T]\times \R_+^2$.  In what follows, we first assume that the value function $u\in{\C}^{1,2}(\mathcal{D}_T)\cap {\C}\left(\overline{\mathcal{D}}_T\right)$ and $u_{xx} < 0 $ on $\mathcal{D}_T$, which will be verified later. Here, $\C^{1,2}(\mathcal{D}_T)$ denotes the set of real-valued functions on $\mathcal{D}_T$ that are continuously differentiable in $t$ and twice continuously differentiable in $(x,z)$. By the heuristic dynamic programming argument, the HJB equation satisfied by $u$ is given by
\begin{align}\label{HJB}
\begin{cases}
\displaystyle u_t+\sup_{\theta\in\R}\left[u_x\mu\theta+\frac{1}{2}u_{xx}(\sigma\theta-(1-\lambda)\sigma_Zz)^2+u_{xz}(\sigma\theta-(1-\lambda)\sigma_Zz)\sigma_Z z \right] \\[0.6em]
\displaystyle \qquad\qquad-((1-\lambda)\mu_Zz+\lambda f(t)) u_x+\mu_Z zu_z+\frac{1}{2}\sigma_Z^2z^2u_{zz}=\rho u,\quad \forall(t,x,z)\in\mathcal{D}_T,\\[0.6em]
\displaystyle u_x(t,0,z)=1,\quad \forall(t,z)\in[0,T)\times \R_+,\\[0.6em]
\displaystyle u(T,x,z)=0,\quad \forall(x,z)\in\R_+^2.
\end{cases}
\end{align}
The Neumann boundary condition $u_x(t,0,z)=1$ stems from the fact that $L_t^X$ increases if and only if $X_t=0$. 
 By taking the first order condition over the control,  we get, $\forall(t,x,z)\in\mathcal{D}_T$,
\begin{align}\label{HJB-nonlinear}
&u_t-\frac{\mu^2}{2\sigma^2}\frac{u_x^2}{u_{xx}}-\frac{\mu\sigma_Z z}{\sigma}\frac{u_xu_{xz}}{u_{xx}}+\frac{1}{2}\sigma_Z^2z^2\left(u_{zz}-\frac{u_{xz}^2}{u_{xx}}\right)\\
 &\quad+\mu_Z zu_z\left(\frac{(1-\lambda)(\mu_Z\sigma-\mu\sigma_Z)}{\sigma}z+\lambda f(t)\right)u_x =\rho u.\nonumber
\end{align}
The HJB equation \eqref{HJB-nonlinear} is analytically intractable due to its nonlinear terms involving quotients of derivatives. To overcome this, we employ a dual transformation to linearize the problem, converting the original equation into a linear parabolic PDE. The resulting linear structure then enables a probabilistic solution representation via the Feynman-Kac formula, facilitating the study of the regularity using the probabilistic representation.

We first apply the dual transform to linearize the original HJB equation \eqref{HJB-nonlinear}. As a direct result of Lemma~\ref{lem:propoertyu},  $|u_x(t,x,z)|= u_x(t,x,z)\leq 1$ for all $(t,x,z)\in\overline{\mathcal{D}}_T$. Then, we may apply Legendre-Fenchel transform of the solution $u$  with respect to $x$ that, for all $(t,y,z)\in[0,T]\times (0,1]\times\R_+$,
\begin{align}\label{eq:LFT-u}
\hat{u}(t,y,z):=\sup_{x>0}\{u(t,x,z)-xy\}. 
\end{align}
Consequently $u(t,x,z)=\inf_{y\in(0,1]}\{\hat{u}(t,y,z)+xy\}$ for $(t,x,z)\in\overline{\mathcal{D}}_T$. Define $x^*(t,y,z)=u_x(t,\cdot,z)^{-1}(y)$ with $y\mapsto u_x(t,\cdot,z)^{-1}(y)$ being the inverse function of $x\mapsto u_x(t,x,z)$. Thus, $x^*=x^*(t,y,z)$ satisfies the equation $u_x(t,x^*,z)=y$ for $(t,z)\in[0,T]\times \R_+$. 
In view of the dual relation, we get the following dual PDE of \eqref{HJB-nonlinear} that
\begin{align}\label{eq:dual-u}
\begin{cases}
\displaystyle \hat{u}_t+\frac{\mu^2}{2\sigma^2}y^2\hat{u}_{yy}-\frac{\mu\sigma_Z }{\sigma}zy\hat{u}_{yz}+\frac{1}{2}\sigma_Z^2z^2\hat{u}_{zz}+\rho y\hat{u}_y+\mu_Zz\hat{u}_z\\[0.9em]
\displaystyle \quad-\left(\frac{(1-\lambda)(\mu_Z\sigma-\mu\sigma_Z)}{\sigma}z+\lambda f(t)\right)y=\rho \hat{u},\quad \forall(t,y,z)\in[0,T)\times (0,1)\times\R_+,\\[0.8em]
\displaystyle \hat{u}_y(t,1,z)=0,\quad \forall(t,z)\in[0,T)\times \R_+,\\[0.6em]
\displaystyle \hat{u}(T,y,z)=0,\quad \forall(y,z)\in(0,1]\times\R_+.
\end{cases}
\end{align}
For convenience, we further consider the transform  
 $v(t,r,z)=e^{-\rho t}\hat{u}\left(t,e^{-r},z\right)$ for $(t,r,z)\in\overline{\mathcal{D}}_T$. Then,  we arrive at
\begin{align}\label{eq:HJB-v}
\begin{cases}
\displaystyle v_t+\frac{\mu^2}{2\sigma^2}v_{rr}+\frac{\mu\sigma_Z }{\sigma}zv_{rz}+\frac{1}{2}\sigma_Z^2z^2v_{zz}+\left(\frac{\mu^2}{2\sigma^2}-\rho\right) v_r+\mu_Z zu_z\\[0.8em]
\displaystyle\qquad\qquad-\left((1-\lambda)\eta z+\lambda f(t)\right)e^{-\rho t-r}=0,\quad \forall(t,r,z)\in[0,T)\times (0,+\infty)\times\R_+,\\[0.8em]
\displaystyle v_r(t,0,z)=0,\quad \forall(t,z)\in[0,T)\times\R_+,\\[0.6em]
\displaystyle v(T,r,z)=0,\quad \forall(r,z)\in\R_+^2,
\end{cases}
\end{align}
with the parameter $\eta:=\frac{\mu_Z\sigma-\mu\sigma_Z}{\sigma}>0$.

We next study the existence and uniqueness of classical solutions to problem \eqref{eq:HJB-v} using the probabilistic representation. Toward this end, let us define, for $(t, r,z) \in \overline{\mathcal{D}}_T$, 
\begin{align}\label{eq:v}
v(t,r,z)&:=-\lambda\Ex\left[ \int_t^Te^{-\rho s-R^{t,r}_s}f(s) ds\right]-(1-\lambda)\eta\Ex\left[ \int_t^Te^{-\rho s-R^{t,r}_s}Z_s^{t,z} ds\right],
\end{align}
where, for $(t,r,z)\in  [0,T]\times\R_+^2$, the process $(Z_s^{t,z})_{s\in [t,T]}$  is given by \eqref{eq:Zt-MFG} with $Z_t^{t,z}=z$, and the process $(R_s^{t,r})_{s\in [t,T]}$ is  a reflected Brownian motion defined by, for $s\in[t,T]$,
\begin{align}\label{eq:R}
R_s^{t,r}&:=r+\int_t^s \left(\frac{\mu^2}{2\sigma^2}-\rho\right)du+\frac{\mu}{\sigma}\int_t^s dW_u+\int_t^s dL_u^{t,r}\geq 0.
\end{align}
Here, $(L_s^{t, r})_{s\in [t,T]}$ is a continuous and non-decreasing process that increases only on $\{s \in[t, T] ;~R_s^{t, r}=0\}$ with $L_t^{t, r}=0$. 

The well-posedness of problem \eqref{eq:dual-u} is then provided in the next result.
\begin{proposition}\label{thm:dual-u}
{\it Let $f\in {\C}([0,T])$ satisfy $f(t)>0$ for all $t\in[0,T]$. Then, the function  $\hat{u}(t,y,z)$ defined  by $ \hat{u}\left(t,y,z\right)=e^{\rho t}v(t,-\ln y,z)$ for $(t,y,z)\in[0,T]\times (0,1]\times \R_+$,
is the unique classical solution of the dual equation \eqref{eq:dual-u} satisfying $|\hat{u}(t,y,z)|\leq C(1+z)$ for some $C>0$. Moreover, $\hat{u}_y \in\C^{1,2,2}([0,T]\times (0,1]\times\R_+)$, and  for any $(t,z) \in[0, T)\times\R_+ $, the solution $(0,1] \ni y \mapsto \hat{u}(t,y,z)$ is strictly convex.}
\end{proposition}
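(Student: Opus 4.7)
The plan is to establish the result by a Feynman--Kac argument: show that the candidate $v$ defined in \eqref{eq:v} is a classical $\C^{1,2,2}$ solution of the linear parabolic PDE \eqref{eq:HJB-v} with Neumann boundary condition at $r=0$ and zero terminal value, and then transfer the conclusions to $\hat u$ via the smooth change of variables $\hat u(t,y,z)=e^{\rho t}v(t,-\ln y,z)$ on $(0,1]$. The growth bound $|\hat u(t,y,z)|\le C(1+z)$ is immediate from $e^{-R_s^{t,r}}\le 1$, continuity of $f$, and the GBM moment $\Ex[Z_s^{t,z}]=ze^{\mu_Z(s-t)}\le ze^{\mu_Z T}$ applied directly to \eqref{eq:v}.

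For interior regularity $v\in\C^{1,2,2}$, I would exploit the explicit Skorokhod representation $R_s^{t,r}=r+A_s^t+L_s^{t,r}$ with $A_s^t:=(\mu^2/(2\sigma^2)-\rho)(s-t)+(\mu/\sigma)(W_s-W_t)$ and $L_s^{t,r}=(-r-\min_{u\in[t,s]}A_u^t)_+$, which yields the pathwise derivative $\partial_r R_s^{t,r}=\idc_{\{R_u^{t,r}>0,\,\forall u\in[t,s]\}}$; smoothness in $z$ is immediate since $Z^{t,z}$ is linear in $z$. Applying Itô's formula to $v(s,R_s^{t,r},Z_s^{t,z})$ on $[t,T]$ brings out the joint infinitesimal generator
\[
\mathcal{L}g=\frac{\mu^2}{2\sigma^2}g_{rr}+\frac{\mu\sigma_Z}{\sigma}zg_{rz}+\frac{1}{2}\sigma_Z^2 z^2 g_{zz}+\Bigl(\frac{\mu^2}{2\sigma^2}-\rho\Bigr)g_r+\mu_Z z g_z,
\]
where the cross term $g_{rz}$ arises because $R^{t,r}$ and $Z^{t,z}$ share the common driver $W$, together with a boundary contribution $\int_t^T v_r(s,0,Z_s^{t,z})\,dL_s^{t,r}$. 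The Neumann condition $v_r(t,0,z)=0$ follows from the pathwise formula for $\partial_r R_s^{t,r}$ evaluated at $r=0$, where the indicator vanishes almost surely since the process starts at the boundary. With the local-time term annihilated, matching the expectation of the Itô expansion against \eqref{eq:v} and using the arbitrariness of $(t,r,z)$ forces $(\partial_t+\mathcal{L})v=((1-\lambda)\eta z+\lambda f(t))e^{-\rho t-r}$, and the terminal condition $v(T,\cdot,\cdot)=0$ is immediate.

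Uniqueness of the classical solution to \eqref{eq:dual-u} is obtained by running the same Itô computation for any other $\tilde v$ satisfying \eqref{eq:HJB-v}, recovering \eqref{eq:v} and hence $\tilde v\equiv v$. The extra regularity $\hat u_y\in\C^{1,2,2}$ requires one additional $r$-derivative on $v$, obtained by differentiating \eqref{eq:v} once more using smoothness of the joint density of $(A_s^t,\min_{u\le s}A_u^t)$. Finally, strict convexity of $y\mapsto \hat u(t,y,z)$ reduces through the identity $\hat u_{yy}=e^{\rho t}(v_{rr}+v_r)/y^2$ to the positivity $v_{rr}+v_r>0$, which follows from two differentiations of \eqref{eq:v} combined with strict positivity of $f$ and of the density of the first hitting time of $0$ by $R^{t,r}$. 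The main technical obstacle I anticipate is obtaining \emph{uniform} regularity up to the reflecting boundary, since $\partial_r R_s^{t,r}$ is a discontinuous indicator so naive pathwise differentiation beyond the first derivative produces distributions and one must carefully exchange differentiation and expectation. An alternative cleaner route is to invoke classical parabolic theory for linear PDEs with oblique/Neumann boundary conditions to construct a smooth solution of \eqref{eq:HJB-v} directly and then identify it with \eqref{eq:v} via Itô plus the uniqueness argument above.
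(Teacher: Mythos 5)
Your proposal follows essentially the same route as the paper: a probabilistic Feynman--Kac representation for $v$, pathwise Skorokhod differentiation of the reflected process to get $v_r$ and the Neumann boundary condition, a density argument for the second $r$-derivative, and the identity $\hat u_{yy}=e^{\rho t}(v_{rr}+v_r)/y^2$ combined with $f>0$ and $\eta>0$ for strict convexity. The technical point you flag — that $\partial_r R_s^{t,r}$ is an indicator so higher derivatives must be taken inside the expectation against an explicit kernel — is precisely what the paper's Lemma~\ref{lem:derivative-v} resolves via the kernel $\phi$ of \eqref{eq:phi} (citing prior work for the computation), after which Proposition~\ref{thm:dual-u} is a short consequence.
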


We next present the verification theorem on the best response control of the representative agent. 
\begin{theorem}[Verification Theorem]\label{thm:verification}
{\it Let $f\in {\C}([0,T])$ satisfy $f(t)> 0$ for all $t\in[0,T]$. Let us introduce the set ${\cal O}_T$ defined by
\begin{align}\label{eq:O}
{\cal O}_T:=\left\{(t,x,z)\in\overline{{\cal D}}_T;~x<x_0(t,z)\right\},
\end{align}
where $x_0(t,z)$ for $(t,z)\in[0,T]\times \R_+$ is defined by
 \begin{align}\label{eq:x0}
 x_0(t,z)&:=\lambda \int_t^{T} f(s)ds+(1-\lambda)\left(e^{\eta (T-t)}-1\right)z.
 \end{align}
Then, we have 
\begin{itemize}
\item[{\rm(i)}] Consider the function $u(t,x,z)$ given  by
\begin{align}\label{eq:sol-u}
&u(t,x,z):=\begin{cases}
\displaystyle \inf_{y\in(0,1]}\{\hat{u}(t,y,z)+yx\},\quad \text{if}~ (t,x,z)\in\mathcal{O}_T,\\[0.8em]
\displaystyle \qquad\qquad0,\quad \text{if}~ (t,x,z)\in \mathcal{O}_T^{c} \cap \overline{\mathcal{D}}_T.
\end{cases}
\end{align}
Consequently, $u\in \C^{1,2}(\mathcal{D}_T)\cap \C(\overline{\mathcal{D}}_T)$ and it is a classical solution of the primal HJB equation \eqref{HJB}.

\item[{\rm(ii)}] Introduce the feedback control function as follows:
\begin{align}\label{eq:optimal-theta}
&\theta^*(t,x,z):=
\begin{cases}
\displaystyle -\frac{\mu}{\sigma^2}\frac{u_x(t,x,z)}{u_{xx}(t,x,z)}-\frac{\sigma_Z}{\sigma}\frac{zu_{xz}(t,x,z)}{u_{xx}(t,x,z)}\\[1em]
\displaystyle\hfill+\frac{(1-\lambda)\sigma_Z}{\sigma}z,\quad\text{if}~ (t,x,z)\in\mathcal{O}_T,\\[1em]
\displaystyle \frac{\mu}{\sigma^2}\lim_{y\downarrow 0} y\hat{u}_{yy}(t,y,z)-\frac{\sigma_Z}{\sigma} \lim_{y\downarrow 0}z \hat{u}_{yz}(t,y,z)\\[1em]
\displaystyle \hfill+\frac{(1-\lambda)\sigma_Z}{\sigma}z,\quad \text{if}~ (t,x,z)\in \mathcal{O}_T^c \cap \overline{\mathcal{D}}_T.
\end{cases}
\end{align}
For $(t,x)\in[0,T]\times \R_+$, consider the controlled state process $X^*=(X^*_s)_{s\in[t,T]}$ that obeys the following reflected SDE:
\begin{align}\label{eq:optimal-SDE}
 X^*_s&=x+\int_t^s\mu \theta^*(\ell,X_{\ell}^*,Z_{\ell}) d\ell+\int_t^s \sigma \theta^*(\ell,X_{\ell}^*,Z_{\ell}) dW_\ell-\int_t^s \lambda f(\ell)d\ell+L_s^{X^*}.
\end{align}
Define $\theta^*_s:=\theta^*(s,X^*_s,Z_s)$ for $s\in[t,T]$. Then, the strategy  $\theta^*=(\theta^*_s)_{s\in [t,T]}\in\mathbb{U}^r_t$ is a best response control of the representative agent. 
\end{itemize}}
\end{theorem}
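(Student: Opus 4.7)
The plan is to derive part~(i) from the duality with the function $\hat{u}$ constructed in Proposition~\ref{thm:dual-u} and then obtain part~(ii) via a standard verification argument based on It\^o's formula and the HJB equation~\eqref{HJB}.

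For part~(i), inside $\mathcal{O}_T$ the strict convexity of $y\mapsto\hat{u}(t,y,z)$ from Proposition~\ref{thm:dual-u} produces a unique interior minimizer $y^*=y^*(t,x,z)\in(0,1]$ of $\hat{u}(t,y,z)+xy$, determined by the first-order condition $\hat{u}_y(t,y^*,z)+x=0$. The envelope theorem gives $u_x=y^*$ and implicit differentiation of this condition yields
\[
u_{xx}=-\frac{1}{\hat{u}_{yy}},\qquad u_{xz}=-\frac{\hat{u}_{yz}}{\hat{u}_{yy}},\qquad u_{zz}=\hat{u}_{zz}-\frac{\hat{u}_{yz}^2}{\hat{u}_{yy}},\qquad u_t=\hat{u}_t,
\]
all evaluated at $(t,y^*(t,x,z),z)$. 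Substituting these identities into the reduced primal equation~\eqref{HJB-nonlinear} collapses it term by term into the dual equation~\eqref{eq:dual-u} evaluated at $y=y^*$; since $\hat{u}$ solves the latter by Proposition~\ref{thm:dual-u}, $u$ solves the former on $\mathcal{O}_T$. The Neumann condition $u_x(t,0,z)=1$ follows because $\hat{u}_y(t,1,z)=0$ forces $y^*=1$ at $x=0$, and the terminal condition is immediate from $\hat{u}(T,\cdot,\cdot)=0$. Outside $\mathcal{O}_T$, the choice $u\equiv 0$ trivially satisfies the HJB and the associated boundary data.

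The key technical step in part~(i) is continuous and smooth pasting across the free boundary $\{x=x_0(t,z)\}$. Using the probabilistic representation~\eqref{eq:v} and dominated convergence on the reflected process $R^{t,r}$ as $r\to\infty$, I would show $\hat{u}(t,0^+,z)=0$ and $-\hat{u}_y(t,0^+,z)=x_0(t,z)$ with $x_0$ given by~\eqref{eq:x0}. This forces $y^*\downarrow 0$ as $x\uparrow x_0(t,z)$ and $u(t,x,z)\to 0$, matching the outer value; higher-order matching of $u_{xx},u_{xz},u_{zz}$ at the boundary reduces to growth rates of $\hat{u}_{yy}$ and $\hat{u}_{yz}$ at $y=0^+$, which are accessible from the $\C^{1,2,2}$-regularity of $\hat{u}_y$ guaranteed by Proposition~\ref{thm:dual-u} together with the explicit SDE~\eqref{eq:R} for $R^{t,r}$.

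For part~(ii), I would apply It\^o's formula to $e^{-\rho(s-t)}u(s,X_s,Z_s)$ for a generic admissible $\theta\in\mathbb{U}^r_t$ and its reflected state~$X$ from~\eqref{state-X}. By the HJB, the drift is $\leq 0$ pointwise with equality at $\theta=\theta^*$ determined by~\eqref{eq:optimal-theta}, and the local-time term integrates as $\int_t^T e^{-\rho(s-t)}u_x(s,0,Z_s)\,dL^X_s=\int_t^T e^{-\rho(s-t)}\,dL^X_s$ thanks to the Neumann condition and the support of $dL^X$. Taking expectations and using $u(T,\cdot,\cdot)=0$ yields
\[
u(t,x,z)\ \geq\ -\Ex\!\left[\int_t^T e^{-\rho(s-t)}\,dL^X_s\,\Big|\,X_t=x,Z_t=z\right],
\]
with equality for $\theta^*$. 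The main obstacle here is the admissibility of $\theta^*$: the Lipschitz regularity of $\theta^*(t,\cdot,\cdot)$ on $\mathcal{O}_T$ is inherited from the $\C^{1,2,2}$-regularity of $\hat{u}$, while continuous extension across $\partial\mathcal{O}_T$ follows from the explicit limits $\lim_{y\downarrow 0}y\hat{u}_{yy}(t,y,z)$ and $\lim_{y\downarrow 0}\hat{u}_{yz}(t,y,z)$ obtained from~\eqref{eq:v} via the dynamics of $R^{t,r}$. Well-posedness of the reflected SDE~\eqref{eq:optimal-SDE} then follows from the Skorokhod problem with Lipschitz coefficients, and the uniform bound $|u_x|\leq 1$ from Lemma~\ref{lem:propoertyu}, together with standard moment estimates on~$X^*$ and $Z$, guarantees that the stochastic integral is a true martingale, closing the verification.
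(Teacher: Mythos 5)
Your proposal follows the same two-part strategy the paper uses: part~(i) via Legendre duality with the dual solution $\hat{u}$ from Proposition~\ref{thm:dual-u}, and part~(ii) via the standard verification argument with It\^o's formula, the Neumann boundary condition handling the local-time integrand, and a limit argument to pass from local to global. This is essentially the paper's route; the paper actually delegates the bulk of part~(i) by reference to Theorem~5.1 of \cite{BoLiaoYu21}, so your more explicit envelope-theorem/implicit-differentiation calculation (giving $u_x = y^*$, $u_{xx} = -1/\hat{u}_{yy}$, etc.) fills in detail the paper only cites.

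Where your sketch is thinner than the paper's is precisely the piece the paper treats as the substantive content of part~(ii): the explicit evaluation of the two limits in the second line of~\eqref{eq:optimal-theta}. The paper proves $\lim_{y\downarrow 0} z\hat{u}_{yz}(t,y,z) = -(1-\lambda)\bigl(e^{\eta(T-t)}-1\bigr)z$ via the probabilistic representation of $v_{rz}$, and then proves $\lim_{y\downarrow 0} y\hat{u}_{yy}(t,y,z)=0$ via a genuine estimate (its~\eqref{eq:estimate-theta-3}--\eqref{eq:estimate-theta-5}), using the explicit kernel $\phi$ and showing that $e^{r}(v_{rr}+v_r)(t,r,z)$ decays faster than $e^{-r}$ as $r\to\infty$. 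You assert these limits are ``accessible from the $\C^{1,2,2}$-regularity of $\hat{u}_y$ \ldots\ together with the explicit SDE for $R^{t,r}$,'' but regularity of $\hat{u}_y$ alone does not produce the decay rate; it is the quadratic exponent in $\phi$ that forces $y\hat{u}_{yy}\to 0$, and this estimate has to be carried out. Without it, you cannot conclude that the feedback~\eqref{eq:optimal-theta} is well-defined and finite on $\mathcal{O}_T^c$, which is what makes $\theta^*$ admissible and matches the closed form~\eqref{optimal-portfolio-Oc}. Similarly, your claim that ``higher-order matching of $u_{xx}, u_{xz}, u_{zz}$ at the boundary reduces to growth rates of $\hat{u}_{yy}$ and $\hat{u}_{yz}$'' is correct as a reduction but leaves the matching itself unverified; this is exactly the nontrivial content of the smooth-pasting argument the paper offloads to its reference.

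One smaller discrepancy in part~(ii): you close the verification by arguing the stochastic integral is a true martingale via moment bounds. The paper instead localizes with the stopping times $\tau_n := (T-1/n)\wedge\inf\{s:X_s\wedge Z_s\geq n\}$, passes to the limit with dominated and monotone convergence, and only then uses the bound $|u(t,x,z)|\leq C(1+z)$ inferred from Proposition~\ref{thm:dual-u}. Both routes are acceptable, but yours silently assumes integrability that must first be justified from the linear growth of $\theta^*$ in $z$, which itself rests on the limit computations noted above. So the logical order in your proposal should be reversed: establish the limits, then local Lipschitz continuity and linear growth of $\theta^*$, then well-posedness of~\eqref{eq:optimal-SDE}, and only then the martingale (or localization) step.
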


We refer to $\mathcal{O}_T$ as the underperforming region and $\mathcal{O}_T^{c} \cap \overline{\mathcal{D}}_T$ as the outperforming region. In the underperforming region, even under the optimal portfolio strategy, the agent's wealth falls below the benchmark process. In contrast, in the outperforming region,  the agent can adopt a portfolio strategy such that the resulting wealth process can always outperform the benchmark. Moreover, Theorem~\ref{thm:verification} yields that the next corollary on the semi-explicit form of the optimal portfolio (feedback) function given by
\begin{corollary}\label{coro:optimal-portfolio}
{\it The optimal feedback function $\theta^*(t,x,z)$ given by \eqref{eq:optimal-theta} is always non-negative and  admits the following semi-explicit expression: on $\mathcal{O}_T$,
\begin{align}\label{optimal-portfolio-O}
\theta^*(t,x,z)&=\frac{\lambda \mu}{\sigma^2} \int_{t}^{T}\int_{-\infty}^{r} e^{-\rho (s-t)+\ell}\phi(s-t,\ell,r)f(s)d\ell ds\nonumber\\
&\quad+\frac{(1-\lambda)\eta \mu}{\sigma^2} z\int_{t}^{T}\int_{-\infty}^{r} \exp\left(-(\rho-\kappa)(s-t)+\left(1-\frac{\sigma\sigma_Z}{\mu}\right)\ell\right) \phi(s-t,\ell,r)d\ell ds\nonumber\\
&\quad+\frac{(1-\lambda)\eta\sigma_Z}{\sigma}  \Ex\left[ \int_t^{\tau_{r}^t \wedge T}e^{-\rho (s-t)-R^{t,r}_s+r}Z_s^{t,z} ds\right]+\frac{(1-\lambda)\sigma_Z}{\sigma}z,
\end{align} 
while on $\mathcal{O}_T^c \cap \overline{\mathcal{D}}_T$,
\begin{align}\label{optimal-portfolio-Oc}
\theta^*(t,x,z)&=\frac{(1-\lambda)\sigma_Z}{\sigma}e^{\eta (T-t)}z,
\end{align}
where $r=r(t,x,z):=-\ln y(t,x,z)$ and $x\to y(t,x,z)$ is the inverse function of $y\to-\hat{u}_y(t,y,z)$ for $(t,x,z)\in {\cal O}_T$. The function $\phi(s,x,y)$ is defined by 
\begin{align}\label{eq:phi}
\phi(s,x,y)&:= \frac{2(2 y-x)}{\tilde{\sigma}^2\sqrt{2\tilde{\sigma}^2 \pi s^3}}\exp \left(\frac{\tilde{\mu}}{\tilde{\sigma}} x-\frac{1}{2} \tilde{\mu}^2 s-\frac{(2 y-x)^2}{2\tilde{\sigma}^2 s}\right)
\end{align}
with parameters $\tilde{\mu}:=\frac{\mu}{2\sigma}-\frac{\sigma}{\mu}\rho$ and $\tilde{\sigma}:=-\frac{\mu}{\sigma}$.}
\end{corollary}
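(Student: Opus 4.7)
The plan is to derive (4.11) and (4.12) by substituting the duality relations into the feedback formula (4.9) and then unwinding the probabilistic representation (3.11) of $v$. On $\mathcal{O}_T$, implicit differentiation of the relation $x = -\hat u_y(t,y,z)$ at the minimizer $y = y(t,x,z)$ yields $u_x = y$, $u_{xx} = -1/\hat u_{yy}$, and $u_{xz} = -\hat u_{yz}/\hat u_{yy}$, which turns (4.9) into
$\theta^*(t,x,z) = (\mu/\sigma^2)\, y\, \hat u_{yy}(t,y,z) - (\sigma_Z/\sigma)\, z\, \hat u_{yz}(t,y,z) + (1-\lambda)(\sigma_Z/\sigma)\, z$.
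Using $\hat u(t,y,z) = e^{\rho t} v(t,-\ln y,z)$ together with $1/y = e^r$, the chain rule gives $y\hat u_{yy} = e^{\rho t + r}(v_{rr} + v_r)$ and $z\hat u_{yz} = -z\, e^{\rho t + r}\, v_{rz}$, converting the feedback into derivatives of $v$.

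Next I would compute $v_r, v_{rr}, v_{rz}$ by differentiating (3.11) under the expectation. The critical observation is the Skorokhod decomposition: for $s < \tau_r^t$ we have $R_s^{t,r} = r + \hat\mu(s-t) + \hat\sigma(W_s - W_t)$ (with $\hat\mu = \mu^2/(2\sigma^2) - \rho$ and $\hat\sigma = \mu/\sigma$), so $\partial_r R_s^{t,r} = 1$, whereas for $s \geq \tau_r^t$ the strong Markov property forces $\partial_r R_s^{t,r} = 0$. Iterating the differentiation and expressing the surviving integrands through the joint density of $(R_s^{t,r}, \{\tau_r^t > s\})$---obtained from the classical reflection-principle joint density of (position, running minimum) of Brownian motion, followed by a Girsanov shift that absorbs $\hat\mu$ (and, in the $Z$-integrand, an additional drift coming from $\sigma_Z$ that produces the factor $\exp((1-\sigma\sigma_Z/\mu)\ell)$)---yields precisely the integrals against $\phi(s-t,\ell,r)$ in (4.11). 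The stopped-expectation third term arises as the boundary contribution at $\tau_r^t$ when the second differentiation is performed on the $Z$-integrand. For the region $\mathcal{O}_T^c$, the condition $x \geq x_0(t,z)$ corresponds to $y \downarrow 0$, i.e.\ $r \to \infty$, in which case $\tau_r^t > T$ almost surely and reflection disappears; a routine Gaussian computation on the non-reflected process then delivers $\lim_{y\downarrow 0} y\hat u_{yy}(t,y,z) = 0$ and $\lim_{y\downarrow 0} z\hat u_{yz}(t,y,z) = -(1-\lambda)(e^{\eta(T-t)} - 1)z$, which substituted back produce (4.12). Non-negativity of (4.11) is inherited termwise: $\phi(s-t,\ell,r) > 0$ on $\{\ell < r\}$ since $2r - \ell > r > 0$ there, $f > 0$ by assumption, and the stopped expectation is non-negative by construction; (4.12) is manifestly non-negative.

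The main technical obstacle will be rigorously justifying the two successive $r$-differentiations under the expectation in the presence of reflection (especially for $v_{rr}$, where the $r$-dependence of the local time $L^{t,r}$ enters through the indicator $\mathbf{1}_{\{\tau_r^t > s\}}$ and contributes a boundary term at $\tau_r^t$), and cleanly identifying $\phi$ in the algebraic form (4.13). Concretely, the $s^{-3/2}$ structure and the factor $2(2r - \ell)\exp(-(2r - \ell)^2/(2\tilde\sigma^2 s))$ must be traced back to the joint density of position and running minimum for standard Brownian motion starting at $r$, while $\exp(\tilde\mu\ell/\tilde\sigma - \tilde\mu^2 s/2)$ arises from the Girsanov change of measure removing $\hat\mu$. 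Together with the integrability estimates required to legitimize the differentiation under the expectation, this identification is the principal analytic step underpinning the corollary.
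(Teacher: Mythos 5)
Your proposal is correct and follows essentially the same route as the paper: pass to the dual via $x=-\hat u_y$, use the chain rule with $\hat u(t,y,z)=e^{\rho t}v(t,-\ln y,z)$ to turn the feedback into $\frac{\mu}{\sigma^2}e^{\rho t+r}(v_{rr}+v_r)+\frac{\sigma_Z}{\sigma}e^{\rho t+r}zv_{rz}+\frac{(1-\lambda)\sigma_Z}{\sigma}z$, and then invoke the probabilistic expressions of $v_r$, $v_{rr}$, $v_{rz}$ (the paper's Lemma \ref{lem:derivative-v}), together with the limits $\lim_{y\downarrow0}y\hat u_{yy}=0$ and $\lim_{y\downarrow0}z\hat u_{yz}=-(1-\lambda)(e^{\eta(T-t)}-1)z$ for the non-injection region. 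One small misattribution: the stopped-expectation term $\frac{(1-\lambda)\eta\sigma_Z}{\sigma}\Ex[\int_t^{\tau_r^t\wedge T}e^{-\rho(s-t)-R_s^{t,r}+r}Z_s^{t,z}ds]$ is not a boundary term of the second $r$-differentiation; rather, in the combination $v_{rr}+v_r$ the stopped expectations cancel exactly, leaving only the $\phi$-integrals, and the third term is precisely $\frac{\sigma_Z}{\sigma}e^{\rho t+r}zv_{rz}$ where $v_{rz}=h_r$ is a single $r$-derivative of the $Z$-part of $v$ — an identification that your chain-rule set-up already gives, so the final answer is unaffected.
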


\begin{remark}\label{rem:XV}
 In fact, the state processes of the primal control problem \eqref{eq:mfg} and the auxiliary control problem \eqref{eq_mfg-2} satisfy the following relationship:
\begin{align}\label{eq:state}
X_t &= V_t - \lambda \overline{V}_t - (1-\lambda)Z_t + \sup_{s\leq t} \left(-V_s + \lambda \overline{V}_s + (1-\lambda)Z_s\right)^+,~~\forall t \in [0,T]. 
\end{align}
Hence, the auxiliary state process $X=(X_t)_{t \in [0,T]}$ can be constructed from the state-benchmark processes $(V,Z)=(V_t, Z_t)_{t \in [0,T]}$. This reveals that the optimal control $\theta^*=(\theta_t^*)_{t \in[0,T]}$ in Theorem~\ref{thm:verification} and Corollary~\ref{coro:optimal-portfolio} exhibits a path-dependent structure in terms of the wealth process $V$ and the benchmark process $Z$. Such path-dependent feature makes the direct PDE analysis intractable. This observation underscores the main advantage of working with the auxiliary state process $X$: it significantly simplifies the problem and enables us to derive the optimal control $\theta^*$ in feedback form in terms of $X$.
\end{remark}

\section{Mean Field Equilibrium}\label{sec:MFG}

This section verifies the consistency condition provided in Definition \ref{def:MFG} for the deterministic function $t\mapsto f(t)$ that
\begin{align}\label{eq:consistency1}
\mathrm{v}+\int_0^t f(s)ds=\mathbb{E}\left[V_t^{\theta^{*,f}}\right], \quad \forall t\in [0,T],
\end{align}
and establishes the existence of a positive function $f^*\in\C([0,T])$ satisfying \eqref{eq:consistency1}. Consequently, the MFE is given by $(f^*,\theta^{*,f^*})=(f^*(t),\theta_t^{*,f^*})_{t\in[0,T]}$ characterized in Definition~\ref{def:MFG}. On the other hand, it follows from \eqref{eq:Vt-MFG} that $V_t^{\theta^{*,f}}=\mathrm{v}+\int_0^t \theta_s^{*,f} \mu ds+\int_0^t \theta_s^{*,f}\sigma dW_s$ for $t\in[0,T]$. Then, taking the expectation on both sides of this equation, we have
\begin{align}\label{eq:consistency2}
\Ex\left[V_t^{\theta^{*,f}}\right]=\mathrm{v}+\int_0^t \Ex\left[\mu\theta_s^{*,f}\right]ds,\quad \forall t\in[0,T].
\end{align}
Thus, in view of \eqref{eq:consistency1} and \eqref{eq:consistency2}, to verify the consistency condition, it suffices to find a positive function $f\in\C([0,T])$ such that
\begin{align}\label{eq:consistency-f}
f(t)= \mu\Ex\left[\theta_t^{*,f}\right],\quad\forall t\in[0,T],
\end{align}
where $\theta^{*,f}=(\theta_t^{*,f})_{t\in[0,T]}$ is the optimal (feedback) portfolio strategy provided in Theorem \ref{thm:verification} with $f\in\C([0,T])$. Toward this end, we proceed with the following three steps:
\begin{itemize} 
\item {\bf Step 1.}\quad Given a positive function $f\in\C([0,T])$, we introduce the so-called reflected dual process and characterize the optimal (feedback) strategy $\theta^{*,f}$ in view of the reflected dual process.
\item {\bf Step 2.}\quad We prove that, start from the dual variable, there exists some positive function $f\in\C([0,T])$ satisfying the consistency condition \eqref{eq:consistency-f}.

\item {\bf Step 3.}\quad We establish the continuous map from the dual variable to the primal state variable, which helps us to find a positive fixed point $f^*\in\C([0,T])$ satisfying the consistency condition \eqref{eq:consistency-f} for primal state variables $(x,z)\in\R_+^2$ . 
\end{itemize}

Next, given a positive function $f\in\C([0,T])$, we will show that the partial derivative $u_x\in \C^{1,2}({\cal O}_T)$ with $u\in \C^{1,2}(\mathcal{D}_T)\cap \C(\overline{\mathcal{D}}_T)$ being given by \eqref{eq:sol-u} and then introduce the reflected dual process in Lemma~\ref{lem:dual-process} below. We first have
\begin{lemma}\label{lem:u_x}
{\it The partial derivative of value function $u$ given by \eqref{eq:sol-u} with respect to $x$ satisfies that $u_x\in \C^{1,2}({\cal O}_T)$.}
\end{lemma}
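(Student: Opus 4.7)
The plan is to identify $u_x$ with the unique minimizer in the Legendre--Fenchel representation \eqref{eq:sol-u}, and then transfer the $\C^{1,2,2}$ regularity of $\hat{u}_y$ established in Proposition~\ref{thm:dual-u} to that minimizer by the implicit function theorem.

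First I would argue that on $\mathcal{O}_T$, the strict convexity of $y\mapsto \hat{u}(t,y,z)$ (Proposition~\ref{thm:dual-u}) guarantees that the infimum in
\begin{equation*}
u(t,x,z)=\inf_{y\in(0,1]}\{\hat{u}(t,y,z)+xy\}
\end{equation*}
is attained at a unique $y^{*}=y(t,x,z)\in(0,1]$, characterized by the first-order condition $-\hat{u}_y(t,y^{*},z)=x$ (with $y^{*}=1$ when $x=0$ thanks to the Neumann condition $\hat{u}_y(t,1,z)=0$). Differentiating the identity $u(t,x,z)=\hat{u}(t,y^{*},z)+xy^{*}$ in $x$ and cancelling the $y^{*}_x$-terms via the first-order condition (an envelope argument) yields $u_x(t,x,z)=y(t,x,z)$. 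Hence the claim is reduced to establishing $y(\cdot)\in\C^{1,2,2}(\mathcal{O}_T)$.

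Next I would set $\Phi(t,y,z;x):=-\hat{u}_y(t,y,z)-x$, which is $\C^{1,2,2}$ in $(t,y,z)$ by Proposition~\ref{thm:dual-u}, and note that strict convexity gives $\partial_y\Phi=-\hat{u}_{yy}(t,y,z)<0$ throughout the relevant range. The classical implicit function theorem then yields $y(t,x,z)$ with the same class of smoothness as $\hat{u}_y$ around each point of $\mathcal{O}_T$. To confirm the regularity counts I would just differentiate the identity $-\hat{u}_y(t,y(t,x,z),z)=x$: one gets $y_x=1/\hat{u}_{yy}$, $y_z=-\hat{u}_{yz}/\hat{u}_{yy}$, $y_t=-\hat{u}_{ty}/\hat{u}_{yy}$, and then $y_{xx}, y_{xz}, y_{zz}$ in terms of $\hat{u}_{yyy}, \hat{u}_{yyz}, \hat{u}_{yzz}$ divided by powers of $\hat{u}_{yy}$, each of which is continuous on $\mathcal{O}_T$ by $\hat{u}_y\in\C^{1,2,2}$.

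The main delicate point, and what I would flag as the principal obstacle, is ensuring the implicit function theorem applies uniformly on $\mathcal{O}_T$, in particular near its upper boundary where $x\uparrow x_0(t,z)$ (i.e.\ $y\downarrow 0^+$) and near $x=0$ (i.e.\ $y=1$). For the former one must verify that $\hat{u}_{yy}$ stays strictly positive and locally bounded away from zero on compact subsets of $\mathcal{O}_T$, which follows by differentiating the probabilistic representation \eqref{eq:v} (through the reflected process $R^{t,r}$) and using the strict convexity assertion of Proposition~\ref{thm:dual-u}. For the latter one uses that $\hat{u}_y \in\C^{1,2,2}$ up to $y=1$ together with the Neumann boundary condition $\hat{u}_y(t,1,z)=0$, so that the correspondence $x\in[0,x_0(t,z))\leftrightarrow y\in(0,1]$ is a $\C^{1,2,2}$-diffeomorphism. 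Once these uniformity/boundary issues are handled, combining them with the envelope identity $u_x=y$ delivers $u_x\in\C^{1,2,2}(\mathcal{O}_T)$.
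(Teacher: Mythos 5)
Your argument is correct and takes essentially the same route as the paper, which simply notes that Lemma~\ref{lem:derivative-v} (giving $v_r\in\C^{1,2,2}$, hence $\hat{u}_y\in\C^{1,2,2}$ and $\hat{u}_{yy}>0$) together with the Legendre--Fenchel representation in Theorem~\ref{thm:verification} yields $u_x=y(t,x,z)$ and its regularity by inversion. One small remark: since $\C^{1,2,2}(\mathcal{O}_T)$ is a local property and $\hat{u}_{yy}>0$ pointwise on the relevant range, the uniformity near the boundary that you flag as the principal obstacle is not actually required.
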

Consequently, we have
\begin{lemma}\label{lem:dual-process}
{\it Recall the process $X^*=(X_t^*)_{t\in[0,T]}$ given by \eqref{eq:optimal-SDE}. Introduce the process $Y=(Y_t)_{t\in [0,T]}$ by $Y_t:=u_x(t,X_t^*,Z_t)$ for all $t\in[0,T)$ and $Y_T:=\lim_{t\uparrow T}Y_t$ with $(X_0^*,Z_0)=(x,z)\in\R_+^2$. Define the stopping time $\tau_0:=\inf\{t\in[0,T];~X_t^*\geq x_0(t,Z_t)\}\wedge T$ ($\inf\varnothing=+\infty$). Then 
\begin{itemize}
\item[{\rm(i)}] if $(0,x,z)\in {\cal O}_T$, then $\tau_0=T$, $\Px$-a.s.. If $(0,x,z)\in {\cal O}_T^c\cap \overline{\mathcal{D}}_T$, then $\tau_0=0$, $\Px$-a.s., and for any $t\in[0,T]$, $(t,X_t^*,Z_t)\in  {\cal O}_T^c\cap \overline{\mathcal{D}}_T$, $\Px$-a.s.;
\item[{\rm (ii)}] for $(0,x,z)\in {\cal O}_T$, the process $Y$ taking values on $(0,1]$ satisfies the following reflected SDE:
\begin{align}\label{eq:Yt}
dY_t= \rho Y_tdt-\frac{\mu}{\sigma}Y_tdW_t-dL_t^Y,~~t\in(0,T],
\end{align}
where the process $L=(L_t^Y)_{t\in[0,T]}$ is a continuous and non-decreasing process (with $L_t^{Y}=0$) that increases on the set $\{t\geq 0;~Y_t=1\}$ only.
\end{itemize}}
\end{lemma}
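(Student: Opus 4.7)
The plan is to verify (i) and (ii) in an order that avoids any circularity between them. I first handle the $\mathcal{O}_T^c$ half of (i) by a direct Itô comparison of $X^*$ against the deterministic curve $x_0(\cdot,Z_\cdot)$; then carry out (ii) on the stopped interval $[0,\tau_0)$ where $u_x$ is smooth by Lemma \ref{lem:u_x}; and finally use the resulting dual SDE to conclude $\tau_0=T$ for the $\mathcal{O}_T$ half of (i), which in turn makes the SDE of (ii) valid throughout $[0,T)$.

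For $(0,x,z)\in\mathcal{O}_T^c\cap\overline{\mathcal{D}}_T$, by definition $x\ge x_0(0,z)$, so $\tau_0=0$. Plugging the explicit feedback \eqref{optimal-portfolio-Oc} into \eqref{state-X} gives drift $(1-\lambda)\sigma_Z\mu/\sigma\cdot e^{\eta(T-t)}Z_t-\lambda f(t)-(1-\lambda)\mu_Z Z_t$ and diffusion $(1-\lambda)\sigma_Z(e^{\eta(T-t)}-1)Z_t$ for $X^*$. Applying Itô to $x_0(t,Z_t)$ from \eqref{eq:x0} and using the identity $\eta=(\mu_Z\sigma-\mu\sigma_Z)/\sigma$ yields exactly the same drift and diffusion. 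Hence $d(X^*_t-x_0(t,Z_t))=dL^{X^*}_t\ge 0$, which combined with $X^*_0-x_0(0,z)\ge 0$ forces $X^*_t\ge x_0(t,Z_t)$ on $[0,T]$. Since $x_0(t,Z_t)>0$ for $t<T$, $X^*$ never touches $0$, so $L^{X^*}$ is in fact inactive and $X^*_t-x_0(t,Z_t)\equiv x-x_0(0,z)$. This puts $(t,X^*_t,Z_t)$ in $\mathcal{O}_T^c\cap\overline{\mathcal{D}}_T$ for all $t\in[0,T]$.

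For (ii), I apply Itô's formula to $Y_t=u_x(t,X^*_t,Z_t)$ on $[0,\tau_0)$, where $u_x\in\C^{1,2,2}(\mathcal{O}_T)$ by Lemma \ref{lem:u_x}. Differentiating the HJB equation \eqref{HJB-nonlinear} in $x$ and invoking the envelope-type first-order condition $\mu u_x+\sigma u_{xx}(\sigma\theta^*-(1-\lambda)\sigma_Z z)+\sigma\sigma_Z z\,u_{xz}=0$ that defines $\theta^*$, a substantial telescoping occurs: after collecting $u_{xt}$ together with the second-order Itô contributions from $(dX^*,dZ)$, the drift of $Y_t$ collapses to $\rho Y_t\,dt$ and the martingale part reduces to $-(\mu/\sigma)Y_t\,dW_t$. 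The reflection contribution from $X^*$ is $u_{xx}(t,0,Z_t)\,dL^{X^*}_t\le 0$; by the Neumann condition $u_x(t,0,z)=1$, this term pushes $Y$ downward exactly at times when $Y_t=1$. Defining $dL^Y_t:=-u_{xx}(t,0,Z_t)\,dL^{X^*}_t$ gives a continuous non-decreasing process supported on $\{t:Y_t=1\}$, yielding precisely \eqref{eq:Yt}.

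Finally, for the $\mathcal{O}_T$ half of (i), I use the SDE just derived. Applying Itô to $R_t:=-\ln Y_t$ transforms \eqref{eq:Yt} into the reflected dynamics \eqref{eq:R} with initial value $r_0=-\ln u_x(0,x,z)\in[0,\infty)$. Since $R$ is a reflected drifted Brownian motion in $[0,\infty)$, one has $R_t<\infty$ for all $t\le T$ almost surely, so $Y_t=e^{-R_t}>0$. From Theorem \ref{thm:verification} combined with Lemma \ref{lem:propoertyu}, $u_x(t,x,z)>0$ iff $x<x_0(t,z)$ ($u$ is identically zero on $\mathcal{O}_T^c$ by construction and is strictly increasing in $x$ on $\mathcal{O}_T$ via the duality \eqref{eq:sol-u}), so positivity of $Y$ yields $X^*_t<x_0(t,Z_t)$ for all $t<T$, i.e., $\tau_0=T$ almost surely, and the SDE of (ii) holds on $[0,T)$. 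I expect the main obstacle to be the Itô step in (ii): carefully tracking how the envelope relation together with $x$-differentiation of the fully nonlinear HJB eliminates the $u_x^2/u_{xx}$-type terms, and how the Neumann boundary condition transmutes the reflection of $X^*$ at $0$ into the reflection of $Y$ at $1$.
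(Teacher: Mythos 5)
Your proof is correct and, for part (ii) and the $\mathcal{O}_T$ half of (i), it is essentially the paper's argument: apply It\^o to $Y_t=u_x(t,X_t^*,Z_t)$ on (a localized piece of) $[0,\tau_0)$, kill the drift by differentiating $(\mathcal{L}^{\theta^*}-\rho)u=0$ in $x$ while the martingale coefficient collapses via the first-order condition $\mu u_x+\sigma u_{xx}(\sigma\theta^*-(1-\lambda)\sigma_Zz)+\sigma\sigma_Zzu_{xz}=0$, set $dL^Y_t=-u_{xx}\,dL^{X^*}_t$ which is supported on $\{Y_t=1\}$ by the Neumann condition, and then identify $Y_t=e^{-R_t^{0,r}}$ (you go $Y\mapsto R$ by It\^o, the paper goes $R\mapsto\tilde Y$ and invokes pathwise uniqueness; the two directions are interchangeable) to conclude $Y>0$ and hence $\tau_0=T$.

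Where you genuinely diverge from the paper is the $\mathcal{O}_T^c$ half of (i). The paper argues by contradiction from the dynamic programming principle: assuming the optimally controlled state enters $\mathcal{O}_T$ with positive probability would force $0=u(0,x,z)\le\Ex[e^{-\rho t}u(t,X_t^*,Z_t)\mathbf{1}_{\mathcal M}]<0$. You instead observe that under the explicit feedback \eqref{optimal-portfolio-Oc}, the drift and diffusion of $X^*$ match those of $x_0(t,Z_t)$ exactly (using $\eta=(\mu_Z\sigma-\mu\sigma_Z)/\sigma$), so that $X^*_t-x_0(t,Z_t)$ is non-decreasing (in fact constant, since $L^{X^*}$ never fires because $x_0(t,Z_t)>0$). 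This is more elementary and gives the stronger pathwise statement $X^*_t-x_0(t,Z_t)\equiv x-x_0(0,z)$ for free; the only thing to add for full rigor is a one-line localization at the first exit time of $\mathcal{O}_T^c$, since the feedback law \eqref{optimal-portfolio-Oc} is \emph{a priori} the controller only while the state remains in $\mathcal{O}_T^c$. With that small patch, your variant is a clean and arguably more transparent alternative to the paper's DPP contradiction.
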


\begin{remark}\label{rem:injection}
Lemma \ref{lem:dual-process} shows that the underperforming region ${\cal O}$ and outperforming region ${\cal O}_T^c\cap \overline{\mathcal{D}}_T$ are not interconnected. In other words, if the initial state $(0,x,z)\in {\cal O}_T$, $(t,X_t^*,Z_t)\in{\cal O}_T$, $\Px$-a.s., for any $t\in[0,T]$; if $(0,x,z)\in {\cal O}_T^c\cap \overline{\mathcal{D}}_T$, then for any $t\in[0,T]$, $(t,X_t^*,Z_t)\in  {\cal O}_T^c\cap \overline{\mathcal{D}}_T$, $\Px$-a.s.. Whether the wealth process falls below the benchmark completely depends on the initial wealth level of the fund manager and  initial level of the benchmark process.
\end{remark}

In view of Theorem \ref{thm:verification}, Corollary \ref{coro:optimal-portfolio} and the reflected dual process $Y=(Y_t)_{t\in[0,T]}$, the optimal portfolio strategy $\theta^{*,f}=(\theta_t^{*,f})_{t\in[0,T]}$  admits the following expression.
\begin{corollary}\label{coro:optimal-portfolio-dual}
{\it The optimal portfolio strategy $\theta_t^{*,f}=(\theta^{*,f})_{t\in[0,T]}$ defined in Theorem \ref{thm:verification} admits the following expression,  on $\mathcal{O}_T$,
\begin{align}\label{optimal-portfolio-daul-O}
&\theta^{*,f}_t=\frac{\lambda \mu}{\sigma^2} \int_{t}^{T}\int_{-\infty}^{R_t^{0,r}} e^{-\rho (s-t)+x}\phi(s-t,x,R_t^{0,r})f(s) ds\nonumber\\
&\quad+\frac{(1-\lambda)\eta\sigma_Z}{\sigma}  \varphi(t,R_t^{0,r},Z_t^{0,z})+\frac{(1-\lambda)\sigma_Z}{\sigma}Z_t^{0,z}\nonumber\\
&\quad+\frac{(1-\lambda)\eta \mu}{\sigma^2} Z_t^{0,z}\int_{t}^{T}\int_{-\infty}^{R_t^{0,r}} \exp\left(-(\rho-\kappa)(s-t)+\left(1-\frac{\sigma\sigma_Z}{\mu}\right)x\right)\phi(s-t,x,R_t^{0,r})dx ds,
\end{align} 
on $\mathcal{O}_T^c \cap \overline{\mathcal{D}}_T$,
\begin{align}\label{optimal-portfolio-dual-Oc}
\theta^{*,f}_t&=\frac{(1-\lambda)\sigma_Z}{\sigma}e^{\eta (T-t)}Z_t^{0,z},
\end{align}
where the function $\varphi:[0,T]\times \R_+^2\mapsto \R$ is defined by
\begin{align}\label{eq:varphi}
\varphi(t,r,z):=\Ex\left[ \int_t^{\tau_{\hat{r}}^t \wedge T}e^{-\rho (s-t)-R^{t,r}_s+r}Z_s^{t,z} ds\right].
\end{align}
Here, recall the reflected process $(R_t^{0,r})_{t\in [0,T]}$ given by \eqref{eq:R} with $r=-\ln(u_x(0,x,z))$ and the function $\phi$ given by \eqref{eq:phi}.}
\end{corollary}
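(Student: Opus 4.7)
The plan is to derive both pieces of the claimed representation by evaluating the feedback formulas of Corollary \ref{coro:optimal-portfolio} along the optimal state trajectory $(X_s^*,Z_s)_{s\in[0,T]}$ and then rewriting the auxiliary dual variable $r(t,X_t^*,Z_t)=-\ln u_x(t,X_t^*,Z_t)$ as the reflected process $R_t^{0,r}$ of \eqref{eq:R}, where the initial value $r=-\ln u_x(0,x,z)$ matches the convention specified at the end of the statement.

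First I would recall from Corollary \ref{coro:optimal-portfolio} that on $\mathcal{O}_T$ the optimal feedback $\theta^*(t,x,z)$ is given by \eqref{optimal-portfolio-O}, where, besides $z$, the only state-dependent quantity is the scalar $r=r(t,x,z)=-\ln y(t,x,z)$. Since $y(t,\cdot,z)$ is defined as the inverse of $y\mapsto -\hat{u}_y(t,y,z)$, the duality relation $-\hat{u}_y(t,y,z)=x$ iff $u_x(t,x,z)=y$ yields $y(t,x,z)=u_x(t,x,z)$. Evaluated along the optimal trajectory, this reads
\begin{equation*}
r(t,X_t^*,Z_t)=-\ln u_x(t,X_t^*,Z_t)=-\ln Y_t,
\end{equation*}
where $Y_t$ is the reflected dual process of Lemma \ref{lem:dual-process}.

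The key identification is that $-\ln Y_t$ coincides pathwise with $R_t^{0,r_0}$ for $r_0=-\ln u_x(0,x,z)$. Applying It\^o's formula to $R_t:=-\ln Y_t$ together with \eqref{eq:Yt} gives
\begin{equation*}
dR_t=\left(\frac{\mu^2}{2\sigma^2}-\rho\right)dt+\frac{\mu}{\sigma}dW_t+\frac{1}{Y_t}dL_t^Y,
\end{equation*}
and since $L^Y$ only increases on $\{Y_t=1\}$, the last term reduces to $dL_t^Y$. As $Y_t\in(0,1]$ forces $R_t\geq 0$ with $R_t=0$ iff $Y_t=1$, the increasing process $L^Y$ also plays the role of the Skorokhod local time pushing $R$ upward at $0$. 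Comparing with \eqref{eq:R} and using $R_0=r_0$, pathwise uniqueness of the reflected SDE forces $R_t\equiv R_t^{0,r_0}$. Substituting $r=R_t^{0,r_0}$ and $z=Z_t^{0,z}$ into \eqref{optimal-portfolio-O} then yields \eqref{optimal-portfolio-daul-O}; in particular, the third (expectation) term matches the definition \eqref{eq:varphi} of $\varphi(t,R_t^{0,r_0},Z_t^{0,z})$.

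For the non-capital-injection region, Remark \ref{rem:injection} guarantees that if $(0,x,z)\in\mathcal{O}_T^c\cap\overline{\mathcal{D}}_T$ then $(t,X_t^*,Z_t)\in\mathcal{O}_T^c\cap\overline{\mathcal{D}}_T$, $\Px$-a.s.\ for every $t\in[0,T]$. Hence the piecewise formula \eqref{optimal-portfolio-Oc} applies pathwise and directly gives $\theta_t^{*,f}=\theta^*(t,X_t^*,Z_t)=\frac{(1-\lambda)\sigma_Z}{\sigma}e^{\eta(T-t)}Z_t^{0,z}$, which is exactly \eqref{optimal-portfolio-dual-Oc}. The main obstacle is the careful treatment of the local-time terms in the It\^o calculation above and the invocation of Skorokhod uniqueness to match $L^Y$ with the local time of $R$ at $0$; beyond that, the corollary reduces to a direct substitution of the dual-process identification into Corollary \ref{coro:optimal-portfolio}.
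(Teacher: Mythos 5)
The proposal is correct and takes essentially the same route the paper takes: substitute the optimal state trajectory into the feedback formulas of Corollary \ref{coro:optimal-portfolio}, identify the dual variable $r(t,X_t^*,Z_t)=-\ln u_x(t,X_t^*,Z_t)=-\ln Y_t$ via the Legendre duality, and invoke the pathwise identification $-\ln Y_t = R_t^{0,r}$ (which the paper already establishes in the proof of Lemma \ref{lem:dual-process}), together with Remark \ref{rem:injection} for the non-injection region. Your re-derivation of $-\ln Y_t = R_t^{0,r}$ via It\^o's formula is sound but redundant, since it simply reproduces the argument inside the proof of Lemma \ref{lem:dual-process}; a direct citation of that lemma would suffice.
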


In what follows, we use the notation ${\cal O}_T^f$ to highlight the dependence of the underperforming region ${\cal O}_T$ defined by \eqref{eq:O} on the function $f\in\C([0,T])$. Based on Corollary \ref{coro:optimal-portfolio-dual}, for any $(x,z)\in \R_+^2$ , let us consider the following mapping ${\cal I}:\C([0,T])\mapsto \C([0,T])$ given by
\begin{align}\label{eq:Phi}
{\cal I} f(t):=
\begin{cases}
\displaystyle \int_t^T G(r,s,t)f(s)ds+H(r,z,t), & \text{if}~(0,x,z)\in {\cal O}_T^f,\\[1.2em]
\displaystyle \frac{(1-\lambda)\sigma_Z\mu}{\sigma}e^{\eta (T-t)+\mu_Z t}z, &\text{if}~(0,x,z)\in (\mathcal{O}_T^f)^c \cap \overline{\mathcal{D}_T},
\end{cases}
\end{align}
where $r=r^f(0,x,z):=-\ln(u_x^f(0,x,z))$, the functions $G(r,s,t)$ and $H(r,z,t)$ are given by, for $(r,z)\in\R_+^2$ and $0\leq t\leq s\leq T$,
\begin{align}
&G(r,s,t):=\frac{\lambda \mu^2}{\sigma^2} \Ex\left[\int_{-\infty}^{R_t^{0,r}} e^{-\rho (s-t)+x}\phi(s-t,x,R_t^{0,r})dx \right],\label{eq:G}\\
&H(r,z,t)\label{eq:H}\nonumber\\
&:=\frac{(1-\lambda)\eta \mu^2}{\sigma^2} \Ex\left[Z_t\int_{t}^{T}\int_{-\infty}^{R_t^{0,r}} \exp\left(-(\rho-\kappa)(s-t)+\left(1-\frac{\sigma\sigma_Z}{\mu}\right)x\right) \phi(s-t,x,R_t^{0,r})dx ds\right]\nonumber\\
&\quad+\frac{(1-\lambda)\eta\sigma_Z\mu}{\sigma}  \Ex\left[\varphi(t,R_t^{0,r},Z_t^{0,z})\right]+\frac{(1-\lambda)\sigma_Z\mu}{\sigma}e^{\mu_Z t}z.
\end{align}
As a result, to verify the consistency condition \eqref{eq:consistency-f}, it is sufficient to show that the  mapping ${\cal I}$ has a fixed point $f^*\in\C([0,T])$ satisfying $f^*(t)>0$ for all $t\in[0,T]$. 
\begin{remark}
 A main challenge of this problem lies in the piecewise structure of the mapping \eqref{eq:Phi}, whose expression and the boundary both dependent on the input $f$. The free boundary that separates the underperforming region from the outperforming region is not fixed but evolves with the input $f$, creating a moving boundary problem in the fixed point analysis.
\end{remark}

We then need the next two results.

\begin{lemma}\label{lem:fixpoint-r}
{\it Let $(r,z)\in \R_+^2$ be fixed. Define the mapping ${\cal J}:\C([0,T])\mapsto \C([0,T])$ by, for any $f\in\C([0,T])$ and $t\in[0,T]$,
\begin{align}\label{eq:map-J}
{\cal J}f(t):=\int_t^T G(r,s,t) f(s) ds+H(r,z,t).
\end{align}
Then, ${\cal J}:\C([0,T])\mapsto \C([0,T])$ has a unique fixed point $f\in \C([0,T])$ satisfying $f(t)>0$ for all $t\in[0,T]$.}
\end{lemma}

\begin{lemma}\label{lem:continuity-f}
{\it Let $z\in \R_+$ be fixed. For $(t,r)\in[0,T]\times\R_+$, denote by $f(t;r)$ the unique fixed point given in Lemma \ref{lem:fixpoint-r}. Then, the function $(t,r)\mapsto f(t;r)$ is continuous.}
\end{lemma}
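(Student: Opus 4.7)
The plan is to propagate continuity of the data through the Volterra-type fixed-point equation
\begin{equation*}
f(t; r) = \int_t^T G(r, s, t)\, f(s; r)\, ds + H(r, z, t)
\end{equation*}
established in Lemma \ref{lem:fixpoint-r}, and then combine uniform-in-$t$ convergence in $r$ with the already-known time-continuity of each $f(\cdot; r)$ from Lemma \ref{lem:fixpoint-r}.

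First I would establish joint continuity of $(r, s, t) \mapsto G(r, s, t)$ on $\{0 \leq t \leq s \leq T\} \times \R_+$ and $(r, t) \mapsto H(r, z, t)$ on $[0, T] \times \R_+$. The key input is that $r \mapsto R^{0, r}$ is almost-surely continuous as a map into $\C([0, T])$, which follows from the Lipschitz property of the Skorokhod reflection map applied to \eqref{eq:R}; this yields joint almost-sure continuity of $(t, r) \mapsto R_t^{0, r}$. Using the explicit Gaussian form of $\phi$ in \eqref{eq:phi}, moment bounds for $Z^{0, z}$ and $\sup_{t \in [0, T]} R_t^{0, r}$ (uniform in $r$ on compacts), and dominated convergence, I obtain continuity of $G$ and $H$, along with a local uniform bound $M_{r_0} := \sup\{G(r, s, t) : |r - r_0| \leq 1,\, 0 \leq t \leq s \leq T\} < \infty$.

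Next, to lift pointwise continuity in $r$ to uniform convergence in $t$, fix $r_0 \geq 0$ and let $r_n \to r_0$. Subtracting the two fixed-point equations yields
\begin{equation*}
f(t; r_n) - f(t; r_0) = \int_t^T G(r_n, s, t)\bigl[f(s; r_n) - f(s; r_0)\bigr]\, ds + \Delta_n(t),
\end{equation*}
where
\begin{equation*}
\Delta_n(t) := \int_t^T \bigl[G(r_n, s, t) - G(r_0, s, t)\bigr] f(s; r_0)\, ds + H(r_n, z, t) - H(r_0, z, t).
\end{equation*}
The previous step together with boundedness of $f(\cdot; r_0) \in \C([0, T])$ gives $\|\Delta_n\|_\infty \to 0$. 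Writing $\delta_n(t) := \sup_{u \in [t, T]} |f(u; r_n) - f(u; r_0)|$, the above yields $\delta_n(t) \leq M_{r_0} \int_t^T \delta_n(s)\, ds + \|\Delta_n\|_\infty$, and a backward Gronwall inequality then gives $\|f(\cdot; r_n) - f(\cdot; r_0)\|_\infty \leq e^{M_{r_0} T} \|\Delta_n\|_\infty \to 0$. Joint continuity at an arbitrary $(t_0, r_0)$ then follows from
\begin{equation*}
|f(t_n; r_n) - f(t_0; r_0)| \leq \|f(\cdot; r_n) - f(\cdot; r_0)\|_\infty + |f(t_n; r_0) - f(t_0; r_0)|,
\end{equation*}
since the first term vanishes by the above and the second because $f(\cdot; r_0) \in \C([0, T])$.

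The main technical obstacle is the kernel continuity in Step 1: the density $\phi(s - t, x, R_t^{0, r})$ is singular as $s \downarrow t$, and a uniform integrable majorant in a neighborhood of $r_0$ is required for dominated convergence. Fortunately, the integration region $x < R_t^{0, r}$ sits on the opposite side of the Gaussian mode $x = 2 R_t^{0, r}$, so the factor $\exp\bigl(-(2 R_t^{0, r} - x)^2 / (2 \tilde{\sigma}^2 (s - t))\bigr)$ in \eqref{eq:phi} suppresses the small-time singularity; combined with standard moment bounds for the reflected process \eqref{eq:R}, this yields the required domination, though a careful argument near the reflecting boundary $r = 0$ and for the tail behavior as $|x| \to \infty$ is needed to justify interchanging limits.
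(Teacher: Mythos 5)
Your proposal is correct and follows essentially the same route as the paper: both proofs split $|f(t;r') - f(t_0;r_0)|$ via the triangle inequality into an $r$-perturbation term and a $t$-perturbation term, subtract the two fixed-point equations to set up an integral inequality for the $r$-perturbation, and close it with a backward (time-reversed) Gronwall estimate using the bound $K_1 = \sup G$ over a compact neighborhood of $r_0$, while the $t$-perturbation is handled directly from the known continuity of $f(\cdot;r_0)$ and of $G,H$. The one place you go beyond the paper is in flagging, and sketching how to handle, the small-time singularity of $\phi(s-t,\cdot,\cdot)$ needed to justify dominated convergence in the continuity of $G$ and $H$; the paper asserts this continuity in Lemma \ref{lem:fixpoint-r} without comment, so your attention to the Gaussian-tail domination near $s=t$ is a welcome tightening of a step the paper leaves implicit.
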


Lemma \ref{lem:fixpoint-r} establishes the existence and uniqueness of the fixed point $f$ to the mapping \eqref{eq:map-J}; while Lemma \ref{lem:continuity-f} shows the continuity of the fixed point $f$ given in Lemma \ref{lem:fixpoint-r} with respect to $(t,r)$. Using these auxiliary results, we have the existence of the MFE as a main result of this paper.
\begin{theorem}[MFE]\label{thm:MF-equilibrium}
Let $(x,z)\in\R_+^2$. Then, there exists a fixed point $f^*\in\C([0,T])$ to the mapping \eqref{eq:Phi} satisfying $f^*(t)>0$ for all $t\in[0,T]$. Let $\theta^{*,f^*}=(\theta_t^{*,f^*})_{t\in[0,T]}$ be defined in Theorem \ref{thm:verification} for the fixed point $f^*\in\C([0,T])$. Then, $(f^*,\theta^{*,f^*})=(f^*(t),\theta_t^{*,f^*})_{t\in[0,T]}$ is a MFE.
\end{theorem}

\begin{proof}
For $z\in\R_+$, define $\hat{x}_0(z)$ by
\begin{align}\label{eq:hat-x}
\hat{x}_0(z):=(1-\lambda)\left(\lambda\left(e^{\mu_Z T}-e^{\eta T}\right)+e^{\eta T}-1\right)z>0.
\end{align} 
Then, for any $(x,z)\in\{(x,z)\in \R_+^2;~x\geq \hat{x}_0(z)\}$,  let us define that
\begin{align*}
f(t)&:=\frac{(1-\lambda)\sigma_Z\mu}{\sigma}e^{\eta (T-t)}\Ex\left[Z_t^{0,z}\right]=\frac{(1-\lambda)\sigma_Z\mu}{\sigma}e^{\eta (T-t)+\mu_Z t}z,\quad \forall t\in[0,T].
\end{align*}
Then, $f\in\C([0,T])$ with $f(t)>0$ for all $t\in[0,T]$. Moreover, It is not difficult to verify that $f$ defined above satisfies the consistency condition \eqref{eq:consistency-f}.

Next, we consider the case where $(x,z)\in\{(x,z)\in \R_+^2;~x< \hat{x}_0(z)\}$. It follows from Lemma \ref{lem:fixpoint-r} that, for $(r,z)\in \R_+^2$, the mapping \eqref{eq:map-J} has a unique fixed point $f(\cdot;r)\in\C([0,T])$ satisfying $f(t;r)>0$ for all $t\in[0,T]$. In view of the dual relationship, we have that
\begin{align}\label{eq:x-r}
x(r)&=-\hat{u}_y(0,y,z)=e^{r}v_r(0,r,z)\nonumber\\
&=\lambda e^{r}\mathbb{E}\left[\int_0^{\tau_r^0 \wedge T} e^{-\rho s-R_s^{0, r}} f(s;r)ds\right]+(1-\lambda)\eta  e^{r}\Ex\left[ \int_0^{\tau_r^0 \wedge T}e^{-\rho s-R^{0,r}_s}Z_s^{0,z} ds\right].
\end{align}
This, together with Lemma \ref{lem:continuity-f}, implies that the mapping $r \mapsto x(r)$ is continuous and that
\begin{align}\label{eq:lim-xr}
\lim_{r\downarrow 0}x(r)=0,\quad \lim_{r\uparrow +\infty}x(r)=\hat{x}_0(z).
\end{align}
Consequently, by the continuity of $r\to x(r)$ and \eqref{eq:lim-xr}, for any $\tilde{x}\in [0,\hat{x}_0(z))$, there exists some $r_{\tilde{x}}\in\R_+$ such that $x(r_{\tilde{x}})=\tilde{x}$ and also a unique fixed point $f(\cdot;r_{\tilde{x}})\in\C([0,T])$ of the mapping \eqref{eq:map-J} satisfying $f(\cdot;r_{\tilde{x}})>0$.
Then, for $(x,z)\in\R_+^2$, let us define that, for $t\in[0,T]$,
\begin{align}\label{eq:equilibrium-f}
f^*(t)=
\begin{cases}
\displaystyle f(t;r_x),&\text{if}~x\in[0,\hat{x}_0(z)),\\[1em]
\displaystyle \frac{(1-\lambda)\sigma_Z\mu}{\sigma}e^{\eta (T-t)+\mu_Z t}z,&\text{if}~x \in[\hat{x}_0(z),+\infty).
\end{cases}
\end{align}
Obviously, $f^*\in\C([0,T])$ and $f^*(\cdot)>0$. As a consequence, we can easily verify that the function $f^*$ given by \eqref{eq:equilibrium-f} is a fixed point to the mapping \eqref{eq:Phi}. This yields that $f^*$ given by \eqref{eq:equilibrium-f} satisfies the consistency condition \eqref{eq:consistency1}, and hence $(f^*,\theta^{*,f^*})$ is a MFE. Thus, the proof of the theorem is completed.
\end{proof}

\section{Discussions on MFE}\label{sec:numerical}
We present in this section some quantitative properties and numerical results of the MFE obtained in Theorem \ref{thm:MF-equilibrium}. We first recall that, for $(z,T)\in\R_+\times(0,+\infty)$, the outperforming threshold of initial wealth is given by
\begin{align}\label{eq:threshold}
\hat{x}_0(z)=(1-\lambda)\left\{\lambda\left(e^{\mu_Z T}-e^{\eta T}\right)+e^{\eta T}-1\right\}z.
\end{align} 
If the representative agent's initial wealth level $x\geq \hat{x}_0(z)$, then the portfolio feedback function $\theta^{*,f^*}(t,x,z)$ admits the following explicit expression, for $(t,x,z)\in[0,T]\times [\hat{x}_0(z),+\infty)\times\R_+$,
\begin{align}\label{MFG-portfolio-Oc}
\theta^{*,f^*}(t,x,z)&=\frac{(1-\lambda)\sigma_Z\mu}{\sigma}e^{\eta (T-t)}z.
\end{align}
In this case, the representative agent's wealth process under optimal portfolio strategy consistently outperforms the benchmark.

We next discuss two extreme cases.  As the competition weight parameter $\lambda$ tends towards $1$, the benchmark process for agent $i$ becomes  $\overline{V}_t^n$. This indicates that the agent is primarily concerned with their relative performance compared to other peers, resulting in the threshold  $\hat{x}_0(z)\equiv 0$ and optimal portfolio $\theta^{*,f^*}(t,x,z)\equiv 0$. Without the pressure from the market index performance, the agent refrains from making any investments. Conversely, as the competition weight parameter $\lambda$ tends towards $0$, the benchmark process for agent $i$  becomes  $Z_t^i$.  In this scenario, the agent's goal is to outperform the market index benchmark $Z_t^i$, disregarding the wealth performance of other agents. Consequently, the MFG problem reduces to the stochastic control problem discussed in Section 6 of  \cite{BoLiaoYu21} with $\hat{x}_0(z)=(e^{\eta T}-1)z$ and $\theta^{*,f^*}(t,x,z)=\sigma_Z\mu/\sigma e^{\eta (T-t)}z$.

We next consider the sensitivity analysis on the return rate parameter $\mu_Z$ of the market index process. From \eqref{eq:threshold}, \eqref{MFG-portfolio-Oc}, and the relationship $\eta=\mu_Z-\mu\sigma_Z/\sigma$, it follows that $\partial \hat{x}_0/\partial \mu_Z>0$,  and  $\partial \theta^{*,f^*}/\partial \mu_Z>0$, which implies that both the threshold  and the portfolio feedback function increase with respect to $\mu_Z$. When the market index benchmark process has a higher return, the agent will require a higher level of initial wealth and will invest more in the risky asset to outperform the targeted benchmark.

 If the initial wealth level $x< \hat{x}_0(z)$, even under the optimal portfolio strategy, the agent's wealth falls below the benchmark process. In this case, we can numerically calculate the MFE and value function. Our numerical method proceeds through the following key steps by leveraging the duality  established in our theoretical analysis:
\begin{itemize}
    \item[(i)] Parameter initialization: we initialize all model parameters $(\mu,\sigma,\mu_Z,\sigma_Z,\lambda,\rho)$, the time horizon $T$, and the initial values of the wealth and benchmark processes $(x_0,z_0)$.
   \item [(ii)] Initial outperforming assessment: we compute the outperforming threshold $\hat{x}_0(z)$ via  \eqref{eq:threshold}. If $x_0 \geq \hat{x}_0(z)$, the agent starts in the outperforming region. In this case, the optimal portfolio feedback function $\theta^{*,f^*}$ is computed directly via  \eqref{MFG-portfolio-Oc}, and the value function satisfies $v(t,x,z)=0$. Otherwise, we proceed to the following steps.
  \item[(iii)] Fixed point iteration for dual variable: for a candidate dual variable $r > 0$, we compute the corresponding fixed-point function $f(r;\cdot)$ from  \eqref{eq:map-J} in  Lemma \ref{lem:fixpoint-r} through an iterative procedure. This step requires evaluating the functions $G(r,s,t)$ and $H(r,z,t)$, which involve computing expectations of the reflected process $R=(R_t)_{t\in[0,T]}$. We employ the Skorokhod representation:
   {\small
    \begin{align*}
    \begin{cases}
    \displaystyle R_s^{t,r}= r + \int_t^s \left(\frac{\mu^2}{2\sigma^2} - \rho\right) du + \frac{\mu}{\sigma}\int_t^s dW_u + \int_t^s dL_u^{t,r} ,\\
    \displaystyle L_s^{t,r} = \sup_{t \leq \ell \leq s} \left(-r - \int_t^\ell \left(\frac{\mu^2}{2\sigma^2} - \rho\right) du - \frac{\mu}{\sigma}\int_t^\ell dW_u\right)^+,
    \end{cases}
    \end{align*}}and implement this using Monte Carlo simulation to generate sample paths of both the reflected process $R$ and the benchmark process $Z$, enabling accurate computation of the required expectations.
 \item[(iv)] Recovery of primal state variable: we repeat step (iii) for different values of $r$ and construct the function $r \mapsto x(r)$ defined in \eqref{eq:x-r}. We then identify the specific value $r_0$ such that $x(r_0) = x_0$, thereby recovering the original state variable from the dual variable through the dual relationship.
 \item[(v)] Computation of MFE and value function: using the identified $r_0$, we compute the fixed-point function $f(\cdot;r_0)$ as in step (iii). This allows us to fully characterize the MFE and compute the value function by invoking Theorem \ref{thm:verification}.
\end{itemize}

Let us set parameters $(\mu,\sigma,\mu_Z,\sigma_Z,\lambda,\rho)=(0.1,0.1,$ $0.2,$ $0.1,0.2,1)$, the time horizon $T=1$ and the initial level of  benchmark process  $z_0=20$. Figure \ref{fig:f1} plots the function $r\mapsto x(r)$, which shows that there exists a unique $r_{\tilde{x}}$ such that $x(r_{\tilde{x}})=\tilde{x}$, i.e., the MFE is unique under the given model parameters. Figure \ref{fig:f2} plots the  fixed point  function $t\mapsto f^*(t)$ under different initial level of wealth process.

\begin{figure}[htbp]
\centering
\includegraphics[width=7cm]{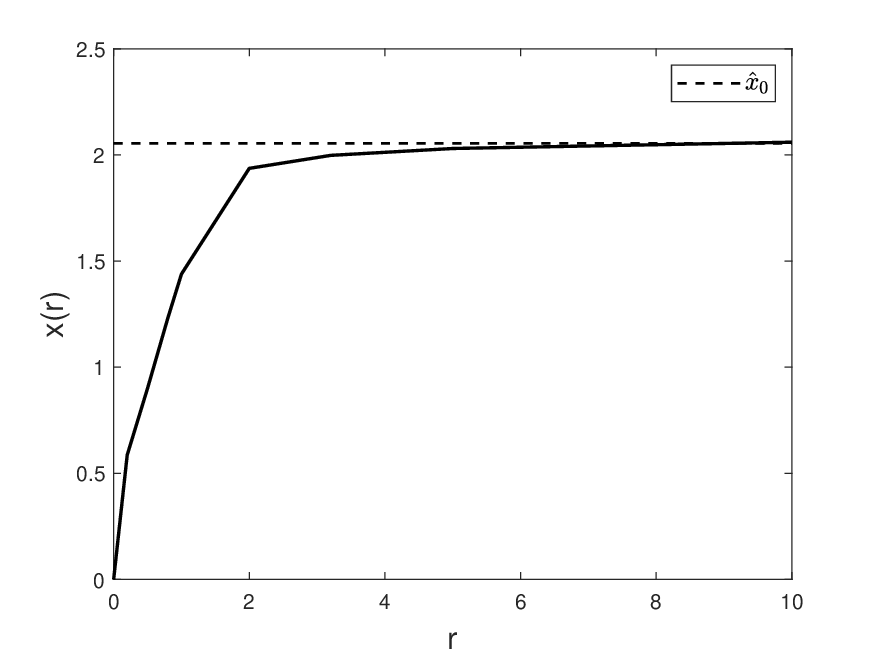}
\caption{  The function $r\mapsto x(r)$.}\label{fig:f1}
\end{figure}

\begin{figure}[htbp]
\centering
\includegraphics[width=7cm]{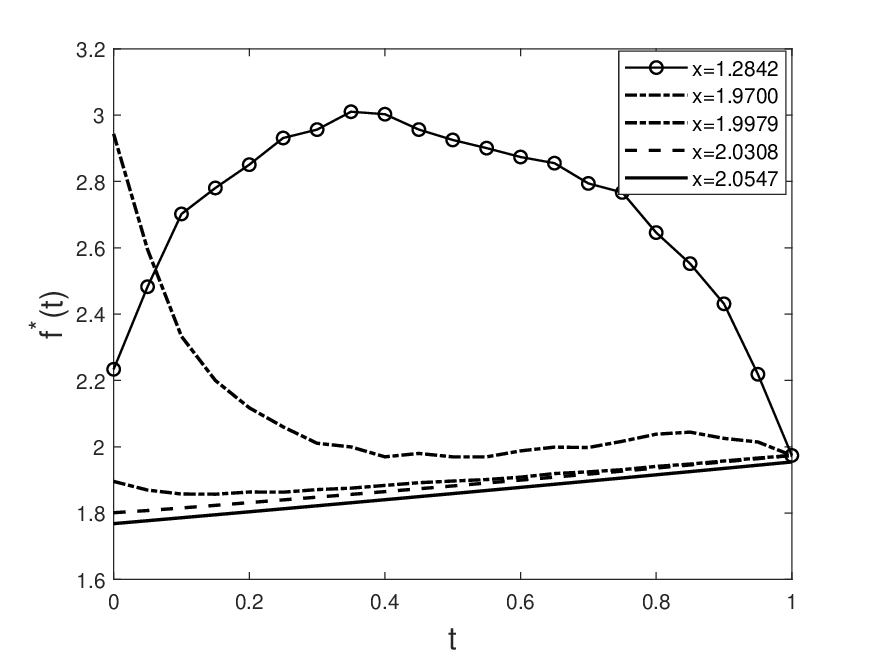}
\caption{ The  fixed point  function $t\mapsto f^*(t)$.}\label{fig:f2}
\end{figure}

\begin{figure}[htbp]
\centering
\includegraphics[width=7cm]{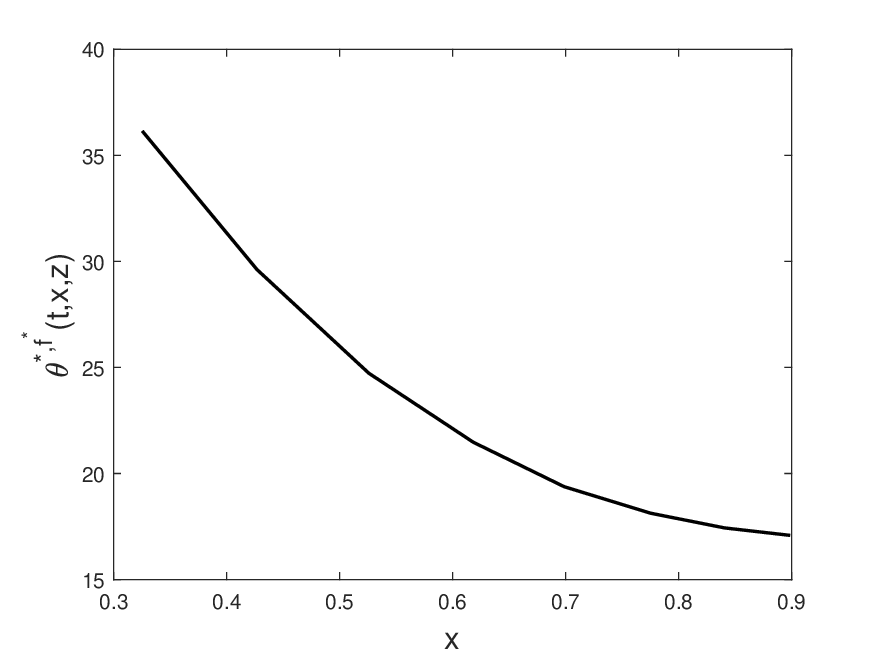}
\caption{The portfolio feedback function $x\mapsto \theta^{*,f^*}(t,x,z)$. The parameters are set to be $(x_0,z_0)=(2.0308,20)$, $(t,z)=(0.5,20)$.}\label{fig:f3}
\end{figure}

\begin{figure}[htbp]
\centering
\includegraphics[width=7cm]{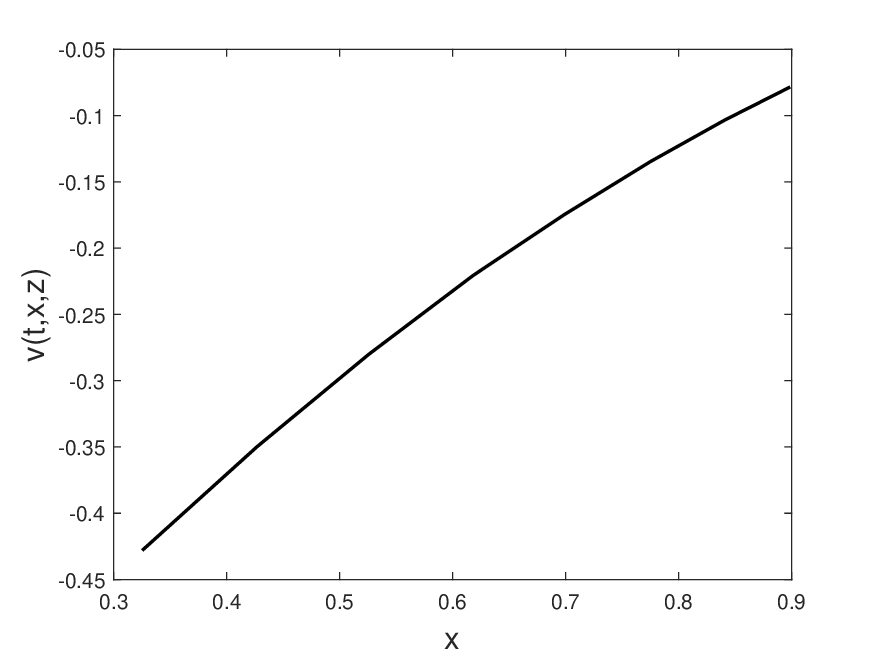}
\caption{ The  value function $x\mapsto v(t,x,z)$. The parameters are set to be $(x_0,z_0)=(2.0308,20)$, $(t,z)=(0.5,20)$.}\label{fig:f4}
\end{figure}
Then, we plot the portfolio feedback function $x\mapsto \theta^{*,f^*}(t,x,z)$ in Figure \ref{fig:f3} and value function $x\mapsto v(t,x,z)$ in Figure \ref{fig:f4}. In particular, it is interesting to see that $\theta^{*,f^*}(t,x,z)$ is a decreasing function of $x$ for the fixed $t$ and $z$, which seems counter-intuitive. However, we note that the large value of $x$ indicates that the current wealth level of all agents or the population is also high, i.e., $f^*(t)$  is very large as well. As a result, in the equilibrium state, it has a larger chance for the agent to falls below the benchmark process. more frequently due to the tracking constraint. To reduce the largest shortfall with reference to the benchmark process, the representative agent in the equilibrium will strategically reduce the allocation in the risky asset when $x$ becomes large to pull down $f^*(t)$  to lower the growth rate of the benchmark process. From Figure \ref{fig:f2}, it can be observed that under different initial values of the state process $X$, although $f^*(t)$ is always non-negative, its monotonicity in terms of $t$ may vary significantly under the impact of initial wealth. In particular, in the situation when $f^*(t)$  climbs to a very large level at some time $t$, we can see from Figure \ref{fig:f2} that $f^*(t)$  will start to decrease afterwards, which echos from the observation in Figure \ref{fig:f3} that the portfolio strategy under MFE is decreasing in $x$ such that the resulting $f^*(t)$ under MFE will decline when it hits a high level, that is, the representative agent will choose the portfolio strategy to maintain the growth rate of the wealth around a reasonable level.

We next conduct some numerical examples on the sensitivity analysis with respect to some model parameters. Recall that the value function is an equivalent characterization of the  expectation of the largest shortfall. We illustrate in Figures \ref{fig:f5} and Figure \ref{fig:f6} the sensitivity of the  expectation of thelargest shortfall as well as the optimal portfolio feedback function with respect to the competition weight parameter $\lambda$. All other model parameters are kept as in the baseline specification before. Consistent with the outperforming case, the portfolio feedback function is decreasing in $\lambda$. As the competition weight increases, the agent places greater emphasis on relative performance against peers and correspondingly less on outperforming the market index in absolute terms. This shift in objective leads to a more conservative optimal allocation to the risky asset: the manager reduces exposure to  risk and relies less on aggressive trading to generate outperformance. In parallel, Figure \ref{fig:f5} shows that a smaller competition weight induces the manager to incur a larger expected largest shortfall relative to the benchmark;; as $\lambda$ rises, the expected largest shortfall  decline. Relaxed competition concerns  against peers (lower $\lambda$) encourage risk-taking  that may boost short-term outperformance but raise the probability of drawdowns, which are reflected in a larger expected largest shortfall.. These comparative statics suggest that tuning $\lambda$ can serve as a policy lever. Institutions seeking lower risk budgets should adopt higher competition weights, whereas those prioritizing  the performance of market index may allow lower $\lambda$, accepting a higher expected largest shortfall.

We also examine how the  expectation of the largest shortfall and the optimal portfolio feedback respond to changes in the volatility parameter $\sigma_Z$ of the market index process. Figures \ref{fig:f7} and \ref{fig:f8} plot these sensitivity results.
The most prominent finding, illustrated in Figure \ref{fig:f7}, is that higher market index volatility $\sigma_Z$ leads to reduced expected largest shortfall. Recall that the effective benchmark is a linear combination $\lambda \overline{V}_t + (1-\lambda)Z_t$. When $\sigma_Z$ increases, the component $Z_t$ becomes more volatile, introducing greater variability into the benchmark itself. Faced with a more volatile benchmark, the agent's optimal strategy naturally adapts to maintain a more conservative position relative to the benchmark. Mathematically, this translates to a decrease in the activity of the reflection process $L_t$, which represents  largest shortfall. The manager's adjusted strategy results in fewer instances where the portfolio value approaches the critical threshold requiring intervention. 

Figure \ref{fig:f8} further reveals the non-monotonic property of the optimal portfolio feedback function with respect to $\sigma_Z$. Specifically, at high wealth levels, an increase in the market index volatility $\sigma_Z$ can lead to more aggressive portfolio weights, as the manager seeks to exploit higher volatility for potential outperformance, thereby tracking the market index and the strong performance of peers. Conversely, at low wealth levels, the same increase in $\sigma_Z$ prompts a reduction in portfolio risk to avoid incurring a costly largest shortfall. In this scenario, the agent reduces investment to mitigate risk and may accept a large shortfall to meet the benchmark requirement. From a practical perspective, these findings suggest that agent operating in more volatile market environments may benefit from a lower expected largest shortfall. However, this comes at the potential cost of reduced opportunity to significantly outperform the benchmark during market upswings. The optimal strategy thus represents a careful balancing act between protection and performance that adapts rationally to the market condition.
\begin{figure}[htbp]
\centering
\includegraphics[width=7cm]{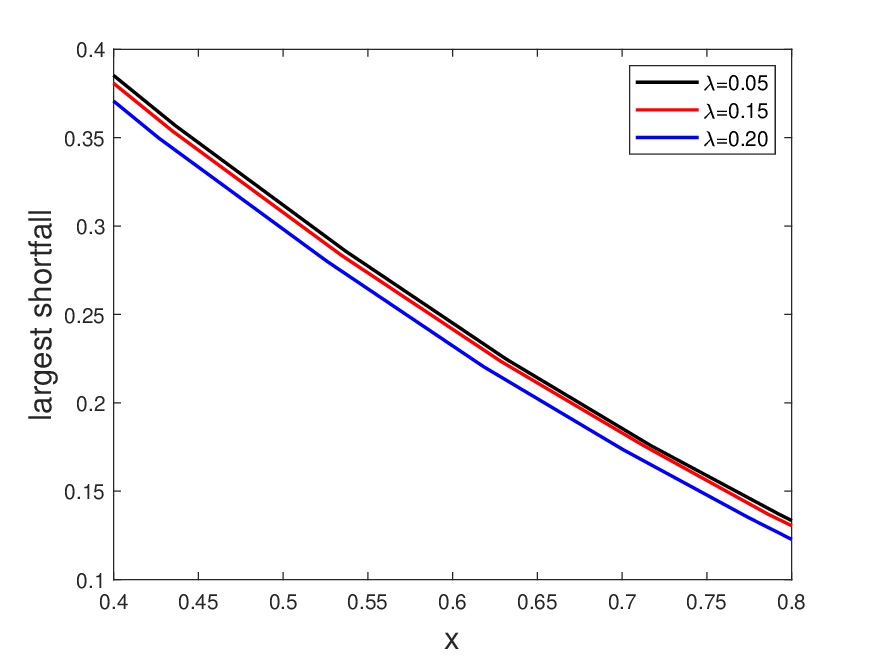}
\caption{ The expectation of the  largest shortfall. The parameters are set to be $(x_0,z_0)=(2.02,20)$, $(t,z)=(0.5,20)$.}\label{fig:f5}
\end{figure}

\begin{figure}[htbp]
\centering
\includegraphics[width=7cm]{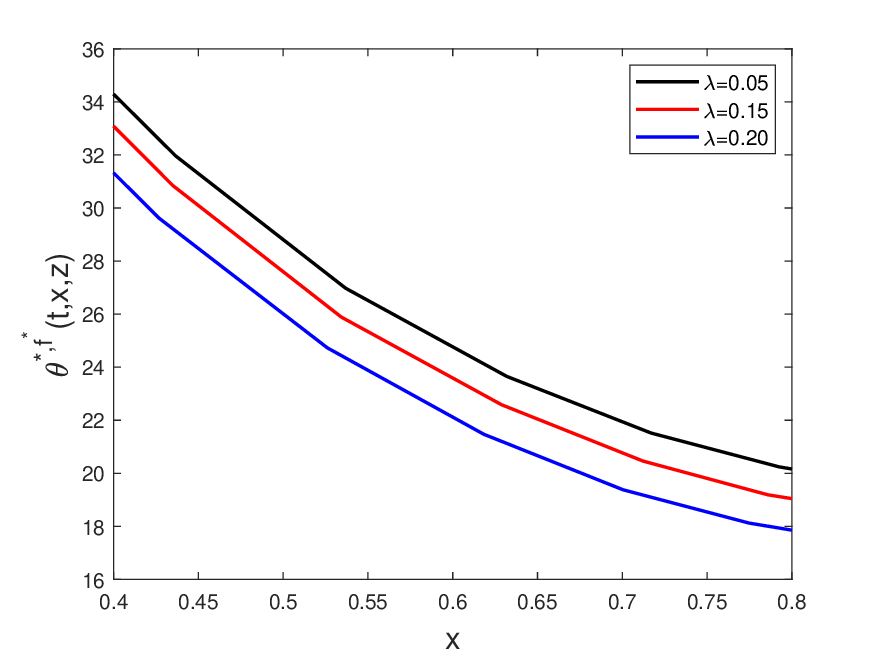}
\caption{ The portfolio feedback function $x\mapsto \theta^{*,f^*}(t,x,z)$. The parameters are set to be $(x_0,z_0)=(2.02,20)$, $(t,z)=(0.5,20)$.}\label{fig:f6}
\end{figure}

\begin{figure}[htbp]
\centering
\includegraphics[width=7cm]{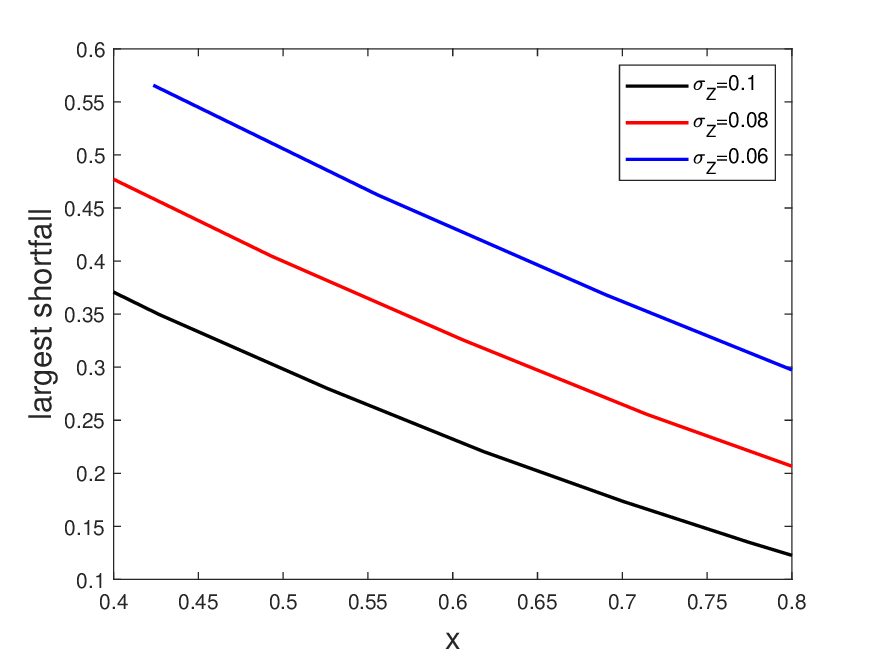}
\caption{ The expectation of the  largest shortfall. The parameters are set to be $(x_0,z_0)=(2.02,20)$, $(t,z)=(0.5,20)$.}\label{fig:f7}
\end{figure}

\begin{figure}[htbp]
\centering
\includegraphics[width=7cm]{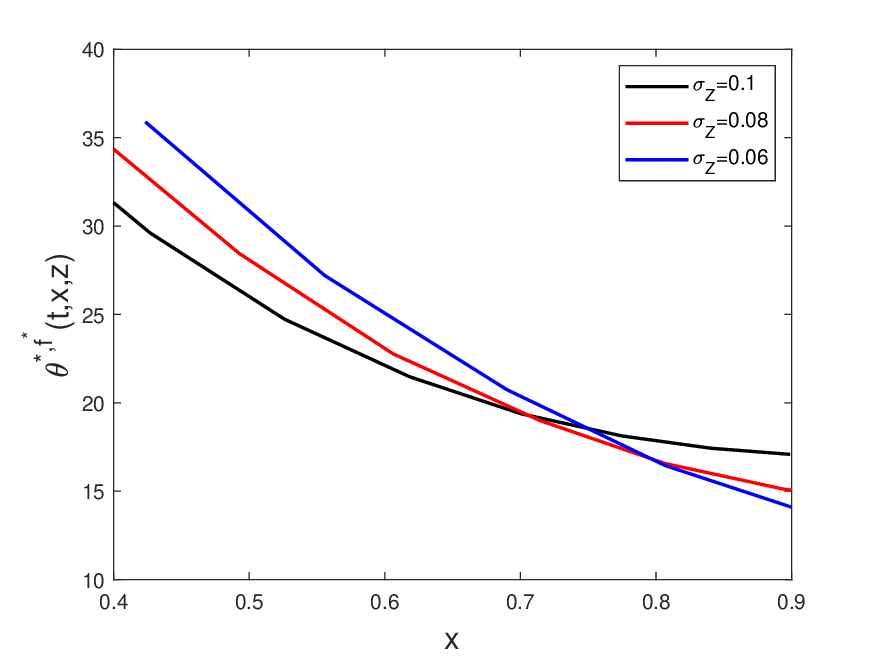}
\caption{ The portfolio feedback function $x\mapsto \theta^{*,f^*}(t,x,z)$. The parameters are set to be $(x_0,z_0)=(2.02,20)$, $(t,z)=(0.5,20)$.}\label{fig:f8}
\end{figure}

\section{Approximate Nash Equilibrium of $n$-Player Game}\label{sec:approximate}
The goal of this section is to show that the mean field equilibrium obtained in Theorem \ref{thm:MF-equilibrium} can help us to construct an approximate Nash equilibrium in the finite $n$-player game with heterogeneous agents when the type vector satisfies certain convergence condition and $n$ is sufficiently large.

Recall the objective functional of agent $i$ that, for 
\begin{align}\label{eq:n-player-game}
J^{i,n}(t,x_0,z_0;\theta^{i,n},{\bm \theta}^{-i,n}):=\Ex\left[  -\int_t^{T} e^{-\rho (s-t)}dL_s^{i,n}\big|X_t^{i,n}=x_0,Z_t^{i,n}=z_0 \right],
\end{align}
where, for $i=1,\ldots,n$, the vector of policies ${\bm \theta}^{-i,n}$ is defined by ${\bm \theta}^{-i,n}=(\theta^{1,n},\theta^{2,n},\ldots$, $\theta^{i-1,n},\theta^{i+1,n},\ldots,\theta^{n,n})$. The dynamics of underlying state process $X^{i,n}=(X_t^{i,n})_{s\in[t,T]}$ with reflection at zero is given by, for 
\begin{align}\label{state-X-i-n}
dX_s^{i,n}&=\left(\theta_s^{i,n} \mu^{i,n} - \frac{\lambda^{i,n}}{n}\sum_{j=1}^n \mu^{j,n}\theta^{j,n}_s-(1-\lambda^{i,n})\mu_Z^{i,n} Z_s^{i,n}\right)ds\\
&\quad+\left(\theta_s^{i,n}\sigma^{i,n}-\frac{\lambda^{i,n}}{n}\sum_{j=1}^n \sigma^{j,n}\theta^{j,n}_s-(1-\lambda^{i,n})\sigma_Z^{i,n} Z_s^{i,n} \right)dW_s^i+dL_s^{i,n},\nonumber
\end{align}
while the dynamics of underlying state process $Z^{i,n}=(Z_s^{i,n})_{s\in[t,T]}$ follows that, for $s\in[t,T]$,
\begin{align}\label{eq:Zt-nhihn}
d Z_s^{i,n}=\mu_Z^{i,n} Z_s^{i,n} d s+\sigma_Z^{i,n} Z_s^{i,n} d W_s^{i}.
\end{align}
Let $C_0>0$ be a constant, the admissible control set is defined as follows, for $(t,x_0,z_0)\in[0,T]\times\R_+^2$, 
\begin{align*}
\mathbb{U}_t(x_0,z_0):=\left\{\theta=(\theta_s)_{s\in[t,T]};~
\theta\in \mathbb{U}_t~\text{ with}~\sup_{s\in[t,T]}\Ex[|\theta_s|^2]\leq C_0(1+x_0^2+z_0^2). \right\}.
\end{align*}
 Denote by $\mathbb{U}_t^{n}(x_0,z_0):=\prod_{i=1}^n \mathbb{U}_t(x_0,z_0)$. Then, the definition of an approximate Nash equilibrium for the $n$-player game is given below:
\begin{definition}[Approximate Nash Equilibrium (ANE)] 
Let $(t,x_0,z_0)\in [0,T]\times \R_+^2$. An admissible strategy ${\bm \theta}^{*,n}=(\theta^{*,1,n},\ldots,\theta^{*,n,n}) \in \mathbb{U}_t^{n}(x_0,z_0)$ is called an $\epsilon_n$-Nash equilibrium to the $n$-player game problem \eqref{eq:n-player-game}-\eqref{eq:Zt-nhihn} if it holds that
\begin{align*}
\sup _{\theta^{i,n} \in \mathbb{U}_t(x_0,z_0)} J^{i,n}\left(t,x_0,z_0;\theta^{i,n},{\bm \theta}^{*,-i,n}\right) \leq J^{i,n}\left(t,x_0,z_0;{\bm \theta}^{*,n}\right)+\epsilon_n, \quad \forall i=1, \ldots, n,
\end{align*}
where $\epsilon_n\to0$ as $n\to\infty$.
\end{definition}
To construct the ANE, we introduce the following auxiliary stochastic control problem of agent $i\in\{1,2,\ldots,n\}$, which is described as, for 
\begin{align}\label{eq:n-player-game-auxiliary}
\bar{u}^{i,n}(t,x_0,z_0)&:=\sup_{\theta^{i,n}\in\mathbb{U}_t(x_0,z_0)}\bar{J}^{i,n}(t,x_0,z_0;\theta^{i,n})\nonumber\\
&:=\sup_{\theta^{i,n}\in\mathbb{U}_t(x_0,z_0)}\Ex\left[  -\int_t^{T} e^{-\rho (s-t)}d\bar{L}_s^{i,n}\big|\bar{X}_t^{i,n}=x_0,Z_t^{i,n}=z_0 \right],
\end{align}
where, the the underlying state process $\bar{X}^{i,n}=(\bar{X}_s^{i,n})_{s\in[t,T]}$ with reflection at zero is given by, for $s\in[t,T]$,
\begin{align}\label{state-X-i}
d\bar{X}_s^{i,n}&=\left(\theta_s^{i,n} \mu^{i,n} -\lambda^{i,n} f^*(s)-(1-\lambda^{i,n})\mu_Z^{i,n} Z_s^{i,n}\right)ds+\left(\theta_s^{i,n}\sigma^{i,n}-(1-\lambda^{i,n})\sigma_Z^{i,n} Z_s^{i,n}\right)dW_s^i\nonumber\\
&\quad+d\bar{L}_s^{i,n},
\end{align}
while the state process $Z^{i,n}=(Z_s^{i,n})_{s\in[t,T]}$ is given by \eqref{eq:Zt-nhihn}.  

It follows from Corollary \ref{coro:optimal-portfolio-dual} that, the optimal portfolio strategy $\theta^{*,i,n}=(\theta^{*,i,n}_s)_{s\in[t,T]}$ admits the representation given by $\theta^{*,i,n}_s=\theta^{*,i,n}(s,R_s^{i,n,t,r},Z_s^{i,n,t,z_0})$ with $r=r(t,x_0,z_0):=-\ln  \bar{u}^{i,n}_x(t,x_0,z_0)$ for $(s,x_0,z_0)\in {\cal O}_T^{i,n}$. Here, the function $\theta^{*,i,n}(s,r,z_0)$ is given by, for $(s,r,z_0)\in[0,T]\times\R\times\R_+$,
{\footnotesize
\begin{align}\label{optimal-portfolio-auxiliary}
&\theta^{*,i,n}(s,r,z_0)\\
&=\begin{cases}
\displaystyle \frac{\lambda^{i,n} \mu^{i,n}}{(\sigma^{i,n})^2} \int_{s}^{T}\int_{-\infty}^{r} e^{-\rho (m-s)+\ell}\phi^{i,n}(m-s,x_0,r)f^*(m) d\ell dm+\frac{(1-\lambda^{i,n})\eta^i\sigma_Z^{i,n}}{\sigma^{i,n}}  \varphi^{i,n}(s,r,z_0)+\frac{(1-\lambda^{i,n})\sigma_Z^{i,n}}{\sigma^{i,n}}z_0\nonumber\\[1em]
\displaystyle \quad+\frac{(1-\lambda^{i,n})\eta^{i,n} \mu^{i,n}}{(\sigma^{i,n})^2}z_0\int_{t}^{T}\int_{-\infty}^{r} \exp\left(-(\rho-\kappa^{i,n})(m-s)+\left(1-\frac{\sigma^{i,n}\sigma_Z^{i,n}}{\mu^{i,n}}\right)\ell\right)\phi^{i,n}(m-s,\ell,r)d\ell dm,\\[1em]
\displaystyle\qquad\qquad\qquad\qquad\qquad\qquad\qquad \qquad\qquad\qquad\qquad\qquad\qquad\qquad\qquad\qquad\qquad\qquad\qquad(s,x_0,z_0)\in\mathcal{O}_T^{i,n},\\[1em]
\displaystyle \frac{(1-\lambda^{i,n})\sigma_Z^{i,n}}{\sigma^{i,n}}e^{\eta^{i,n} (T-t)}z_0,\qquad\qquad\qquad\qquad\qquad\qquad\qquad\qquad\qquad\qquad\qquad \quad(s,x_0,z_0)\in(\mathcal{O}_T^{i,n})^c \cap \overline{\mathcal{D}}_T,
\end{cases}
\end{align}}where, the process $R^{i,n,t,r}=(R^{i,n,t,r})_{s\in[t,T]}$, the functions $\varphi^{i,n}(\cdot), \phi^{i,n}(\cdot)$, the parameters $\eta^{i,n},\kappa^{i,n}$ and the region $\mathcal{O}_T^{i,n}$ are all defined as in the mean field sense, with $(\mu, \sigma, \mu_Z, \sigma_Z, \lambda)$ and $W$ replaced by $(\mu^{i,n}, \sigma^{i,n}, \mu_Z^{i,n}, \sigma_Z^{i,n}, \lambda^{i,n})$ and $W^i$.  

It follows from the similar proof of Theorem \ref{thm:verification} that $\sup_{s\in[t,T]}\mathbb{E}[|\theta_s^{*,i,n}|^2] \le C_{i,n}(1+x_0^2+z_0^2)$, where $C_{i,n}=C(\mu^{i,n},\sigma^{i,n},\mu_Z^{i,n},\sigma_Z^{i,n},\lambda^{i,n})$ with $C(\cdot)$ being a continuous function. Since $(\mu^{i,n},\sigma^{i,n},\mu_Z^{i,n},\sigma_Z^{i,n},\lambda^{i,n})$ converge uniformly to $(\mu,\sigma,\mu_Z,\sigma_Z,\lambda)$ as $n\to\infty$, we can choose a constant $C_0>0$ independent of $i$ and $n$ sufficiently large such that $\theta^{*,i,n}\in\mathbb{U}_t(x_0,z_0)$ for all $n\ge 1$ and $i=1,\ldots,n$.  

To construct an ANE, we require the following auxiliary result.
\begin{lemma}\label{lem:approximate}
Assume that $(\mu^{i,n}, \sigma^{i,n}, \mu_Z^{i,n}, \sigma_Z^{i,n}, \lambda^{i,n})$ converge uniformly to $(\mu, \sigma, \mu_Z, \sigma_Z, \lambda)$ as $n\to\infty$. For $(t,x_0,z_0)\in[0,T]\times\R_+^2$, let $\theta^{*,i,n}\in\mathbb{U}_t(x_0,z_0)$ be the optimal control of problem \eqref{eq:n-player-game-auxiliary} for $i=1,\ldots,n$, which is given by \eqref{optimal-portfolio-auxiliary}. For $i=1,\ldots n$ and $\theta^{i,n}\in \mathbb{U}_t(x_0,z_0)$, define the processes $X^{*,i,n}=(X_s^{*,i,n})_{s\in[t,T]}$ and $X^{*,-i,n}=(X_s^{*,-i,n})_{s\in[t,T]}$ as follows, for $s\in(t,T]$,
\begin{align}\label{X-i-optimal-1}
dX_s^{*,i,n}&=\left(\theta_s^{*,i,n} \mu^{i,n} - \frac{\lambda^{i,n}}{n}\sum_{j=1}^n \mu^{j,n}\theta^{*,j,n}_s-(1-\lambda^{i,n})\mu_Z^{i,n} Z_s^{i,n}\right)ds\\
&\quad+\left(\theta_s^{*,i,n}\sigma^{i,n}- \frac{\lambda^{i,n}}{n}\sum_{j=1}^n \sigma^{j,n}\theta^{*,j,n}_s-(1-\lambda^{i,n})\sigma_Z^{i,n} Z_s^{i,n} \right)dW_s^i+dL_s^{*,i,n},\nonumber
\end{align}
and 
\begin{align}\label{X-i-optimal-2}
dX_s^{*,-i,n}&=\left(\theta_s^{i,n} \mu^{i,n}\left(1-\frac{\lambda^{i,n} }{n}\right) -\frac{\lambda^{i,n} }{n}\sum_{j=1,\ldots,n, j\neq i}\mu^{j,n}\theta^{*,j,n}_s-(1-\lambda^{i,n})\mu_Z^{i,n} Z_s^{i,n}\right)ds\nonumber\\
&+\left(\theta_s^{i,n}\sigma^{i,n}\left(1-\frac{\lambda^{i,n} }{n}\right) -\frac{\lambda^{i,n} }{n}\sum_{j=1,\ldots,n, j\neq i}\sigma^{j,n}\theta^{*,j,n}_s-(1-\lambda^{i,n})\sigma_Z^{i,n} Z_s^{i,n} \right)dW_s^i\nonumber\\
&+dL_s^{*,-i,n}
\end{align}
with $X_t^{*,i,n}=X_t^{*,-i,n}=x_0$. Here, the process $Z^{i,n}=(Z_s^{i,n})_{s\in[t,T]}$ is given by \eqref{eq:Zt-nhihn} with $Z_t^i=z_0$ and the processes $L_s^{*,i,n}, L_s^{*,-i,n}$ for $s\in[t,T]$ are the local time processes of $X_s^{*,i,n},X_t^{*,-i,n}$ for $s\in[t,T]$, respectively. Then, we have
\begin{align}\label{eq:approximate-1}
\lim_{n\to \infty}\left(\sup_{s\in[t,T]}\Ex\left[\left|X_s^{*,i,n}-\bar{X}_s^{*,i,n}\right|\right]+\sup_{s\in[t,T]}\Ex\left[\left|L_s^{*,i,n}-\bar{L}_s^{*,i,n}\right|\right]\right)=0,
\end{align}
and
\begin{align}\label{eq:approximate-2}
\lim_{n\to \infty} \sup_{\theta^{i,n}\in \mathbb{U}_t(x_0,z_0)}\left(\sup_{s\in[t,T]}\Ex\left[\left|X_s^{*,-i,n}-\bar{X}_s^{i,n}\right|\right]+\sup_{s\in[t,T]}\Ex\left[\left|L_s^{*,-i,n}-\bar{L}_s^{i,n}\right|\right]\right)=0,
\end{align}
where, the pair of processes $(\bar{X}^{i,n},\bar{L}^{i,n})$ follows \eqref{state-X-i} under $\theta^{i,n}\in\mathbb{U}_t(x_0,z_0)$; while $(\bar{X}^{*,i,n},\bar{L}^{*,i,n})$ obeys \eqref{state-X-i} under $\theta^{*,i,n}\in\mathbb{U}_t(x_0,z_0)$.
\end{lemma}

The following theorem, which is the main result of this section, establishes an $\epsilon_n$-Nash equilibrium for the $n$-player game.
\begin{theorem}\label{thm:approximation-Nash}
Let the condition of Lemma~\ref{lem:approximate} hold. For any $(t,x_0,z_0)\in[0,T]\times\R_+^2$, let $\theta^{*,i,n}\in\mathbb{U}_t(x_0,z_0)$ be the optimal control of problem \eqref{eq:n-player-game-auxiliary} for $i=1,\ldots,n$, which is given by \eqref{optimal-portfolio-auxiliary}. Then, the policy ${\bm \theta}^{*,n}=(\theta^{*,i,n},\ldots,\theta^{*,n,n})$ is an $\epsilon_n$-Nash equilibrium to the $n$-player game with $\lim_{n\to \infty}\epsilon_n=0$.
\end{theorem}


\begin{proof}
For $i=1,\ldots,n$ and $\theta^i \in \mathbb{U}_t^{n}(x_0,z_0)$, we have 
\begin{align}\label{eq:Nash-ineq-1}
&J^{i,n}(t,x_0,z_0;\theta^{i,n},{\bm \theta}^{*,-i,n})-J^{i,n}(t,x_0,z_0;{\bm \theta}^{*,n})\nonumber\\
&\quad=J^{i,n}(t,x_0,z_0;\theta^{i,n},{\bm \theta}^{*,-i,n})- \sup_{\theta\in \mathbb{U}_t^{\rm r}(x_0,z_0)} \bar{J}^{i,n}(t,x_0,z_0;\theta)\nonumber\\
&\qquad+\sup_{\theta\in \mathbb{U}_t^{\rm r}(x_0,z_0)}\bar{J}^{i,n}(t,x_0,z_0;\theta)-J^{i,n}(t,x_0,z_0;{\bm \theta}^{*,n})\nonumber\\
&\quad \leq \left(J^{i,n}(t,x_0,z_0;\theta^{i,n},{\bm \theta}^{*,-i,n})-  \bar{J}^{i,n}(t,x_0,z_0;\theta^{i,n})\right)\nonumber\\
&\qquad+\left( \bar{J}^{i,n}(t,x_0,z_0;\theta^{*,i,n})-J^{i,n}(t,x_0,z_0;{\bm \theta}^{*,n})\right).
\end{align}
For the 1st term in last inequality of \eqref{eq:Nash-ineq-1}, integration by parts yields that
\begin{align}\label{eq:Nash-ineq-2}
&J^{i,n}(t,x_0,z_0;\theta^{i,n},{\bm \theta}^{*,-i,n})-  \bar{J}^{i,n}(t,x_0,z_0;\theta^{i,n})\nonumber\\
&\qquad=\Ex\left[  -\int_t^{T} e^{-\rho (s-t)}dL_s^{*,-i,n}\right]-\Ex\left[  -\int_t^{T} e^{-\rho (s-t)}d\bar{L}_s^{i,n}\right]\nonumber\\
&\qquad=-e^{-\rho (T-t)}\Ex\left[L_T^{*,-i,n}-\bar{L}_T^{i,n}\right]-\rho \int_t^{T} e^{-\rho (s-t)}\Ex\left[L_s^{*,-i,n}-\bar{L}_s^{i,n}\right]ds\nonumber\\
&\qquad\leq (1+\rho (T-t))\sup_{s\in[t,T]}\Ex\left[\left|L_s^{*,-i,n}-\bar{L}_s^{i,n}\right|\right].
\end{align}
In a similar fashion, for the 2nd term in last inequality of \eqref{eq:Nash-ineq-1}, we have 
\begin{align}\label{eq:Nash-ineq-3}
&\bar{J}^{i,n}(t,x_0,z_0;\theta^{*,i,n})-J^{i,n}(t,x_0,z_0;{\bm \theta}^{*,n})\leq (1+\rho (T-t))\sup_{s\in[t,T]}\Ex\left[\left|L_s^{*,i,n}-\bar{L}_s^{*,i,n}\right|\right].
\end{align}
It follows from  \eqref{eq:Nash-ineq-1}- \eqref{eq:Nash-ineq-3} and Lemma \ref{lem:approximate} that, as $n\to\infty$,
\begin{align*}
&\qquad\sup _{\theta \in \mathbb{U}_t(x_0,z_0)} J^{i,n}\left(t,x_0,z_0;\theta,{\bm \theta}^{*,-i,n}\right) -J^{i,n}\left(t,x_0,z_0;{\bm \theta}^{*,n}\right)\\
&\leq (1+\rho (T-t))\sup _{\theta \in \mathbb{U}_t(x_0,z_0)} \sup_{s\in[t,T]}\left\{\Ex\left[\left|L_s^{*,-i,n}-\bar{L}_s^{i,n}\right|\right]+\Ex\left[\left|L_s^{*,i,n}-\bar{L}_s^{*,i,n}\right|\right]\right\}\to 0,
\end{align*}
where, the sequence of positive constants $(\epsilon_n)_{n\geq1}$ is given by 
\begin{align*}
\epsilon_n:=(1+\rho (T-t))\sup _{\theta \in \mathbb{U}_t(x_0,z_0)} \sup_{s\in[t,T]}\left\{\Ex\left[\left|L_s^{*,-i,n}-\bar{L}_s^{i,n}\right|\right]+\Ex\left[\left|L_s^{*,i,n}-\bar{L}_s^{*,i,n}\right|\right]\right\}.
\end{align*}
Thus, we complete the proof of the theorem.
\end{proof}

\section{Conclusion}\label{sec:conclusion}
In this paper, we have studied a class of MFG problems of optimal tracking portfolio management, where each agent aims to minimize a novel risk measure—the expected largest shortfall relative to a benchmark that combines the population's average wealth with a market index. By developing a reflected dual method, we linearized the nonlinear HJB equation with Neumann boundary conditions, provided a probabilistic representation of the solution, and explicitly characterized the MFE, including a free boundary separating outperforming and underperforming regions. The MFE was further used to construct an $\epsilon_n$-Nash equilibrium for finite player games with $\epsilon_n\to0$ as the population size $n$ grows large.  Our work solves a new class of MFG problems with state reflections analytically and characterizes an MFE in this context. The convex dual method we develop, based on reflected state processes, enables a rigorous proof of the MFE's consistency condition through the analysis of the reflected dual process. As future work, we will focus on relaxing homogeneity by introducing multi‑population MFG or Graphon MFG.

\section{Proofs}\label{sec:proofs}
This section provides the detailed proofs of main results in previous sections.

We first show that $v(t,r,z)$ given by \eqref{eq:v} is a classical solution to the Neumann boundary problem \eqref{eq:HJB-v}.
\begin{lemma}\label{lem:derivative-v}
{\it Let $f\in\C([0,T])$ satisfy $f(t)> 0$ for all $t\in[0,T]$. Then, $v\in{\C}^{1,2,2}(\overline{\mathcal{D}}_T)$ and $v_r\in$ ${\C}^{1,2,2}(\overline{\mathcal{D}}_T)$. Moreover, for $(t, r, h) \in\overline{\mathcal{D}}_T$, we have that
\begin{align}
v_r(t, r,z)&=\lambda\mathbb{E}\left[\int_t^{\tau_r^t \wedge T} e^{-\rho s-R_s^{t, r}} f(s)ds\right]+(1-\lambda)\eta  \Ex\left[ \int_t^{\tau_r^t \wedge T}e^{-\rho s-R^{t,r}_s}Z_s^{t,z} ds\right], \label{eq:v-r}\\
v_{rr}(t,r,z)&=\lambda \int_{t}^{T}\int_{-\infty}^{r} e^{-\rho s-r+x}\phi(s-t,x,r)f(s)dx ds-\lambda  \Ex\left[\int_{t}^{\tau_r^t\wedge T} e^{-\rho s-R_s^{t,r}}f(s)ds\right]\nonumber\\
&~+(1-\lambda)\eta z\int_{t}^{T}\int_{-\infty}^{r} \exp\Bigg(-\rho s-r+\left(1-\frac{\sigma\sigma_Z}{\mu}\right)x+\kappa(s-t)\Bigg) \phi(s-t,x,r)dx ds\nonumber\\
&~-(1-\lambda)\eta  \Ex\left[ \int_t^{\tau_r^t \wedge T}e^{-\rho s-R^{t,r}_s}Z_s^{t,z} ds\right],\label{eq:v-rr}
\end{align}
where $\kappa:=\mu_Z-\frac{\sigma_Z^2}{2}-\frac{\mu\sigma_Z}{2\sigma}+\frac{\sigma\sigma_Z\rho}{\mu}$, the stopping time $\tau_r^t$ is defined by
\begin{align*}
\tau_r^t:=\inf \left\{s \geq t;-\frac{\mu}{\sigma}\left(W_s-W_t\right)-\left(\frac{\mu^2}{2\sigma^2}-\rho\right)(s-t)=r\right\},
\end{align*}
and we recall that the function $\phi(s,x,y)$ is given by \eqref{eq:phi}.
}
\end{lemma}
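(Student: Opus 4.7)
My approach is to derive \eqref{eq:v-r} and \eqref{eq:v-rr} by using the Skorokhod decomposition of $R^{t,r}$ to expose its dependence on the initial point $r$, and then to differentiate under the expectation against an explicit joint density. Introduce the non-reflected process $\tilde{R}_s^t := (\mu^2/(2\sigma^2)-\rho)(s-t)+(\mu/\sigma)(W_s-W_t)$ and its running maximum $M_s^t := \sup_{u\in[t,s]}(-\tilde{R}_u^t)$ (so $M_s^t \geq 0$ since $\tilde R_t^t=0$). The Skorokhod reflection formula gives $R_s^{t,r} = r + \tilde R_s^t + (M_s^t - r)_+$ and $L_s^{t,r} = (M_s^t - r)_+$, while the definition of $\tau_r^t$ yields $\{s < \tau_r^t\} = \{M_s^t < r\}$. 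Hence $r\mapsto R_s^{t,r}$ is Lipschitz, with $\partial_r R_s^{t,r} = \idc_{\{M_s^t < r\}}$ off the $\Px$-null set $\{M_s^t = r\}$.

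First I would derive \eqref{eq:v-r}. From the pathwise identity $\partial_r e^{-R_s^{t,r}} = -e^{-R_s^{t,r}}\idc_{\{s < \tau_r^t\}}$, together with the uniform bounds $e^{-R_s^{t,r}} \leq 1$, $\sup_{r\geq 0}\Ex[(Z_s^{t,z})^2] < \infty$ and $f \in \C([0,T])$, dominated convergence exchanges $\partial_r$ with the expectation and the $s$-integral, producing \eqref{eq:v-r} immediately.

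The main step is \eqref{eq:v-rr}. On $\{s < \tau_r^t\}$ I substitute $R_s^{t,r} = r + \tilde R_s^t$ in \eqref{eq:v-r}, factor out $e^{-r}$, and identify $\phi(s-t,x,y)$ as the joint density of $(-\tilde R_s^t, M_s^t)$ on the region $\{x\leq y,\,y\geq 0\}$. This is the standard reflection-principle density for Brownian motion with drift, with $\tilde\mu,\tilde\sigma$ absorbing the scaling $\beta := \mu/\sigma$; a direct check shows $\tilde\mu/\tilde\sigma = -\alpha/\beta^2$ and $\tilde\mu^2 = \alpha^2/\beta^2$ (with $\alpha := \mu^2/(2\sigma^2)-\rho$), which aligns the exponential parameters. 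Since $Z_s^{t,z}$ is $W$-measurable only through $W_s - W_t$, the pathwise identity $W_s-W_t = -(\sigma/\mu)(x+\alpha(s-t))$ on $\{-\tilde R_s^t = x\}$ collapses $Z_s^{t,z} e^{-\tilde R_s^t}$ to $z\exp((1-\sigma\sigma_Z/\mu)x+\kappa(s-t))$, where the $(s-t)$-coefficient reduces to $\kappa$ after a short computation using the expression of $\alpha$. Writing the two expectations in \eqref{eq:v-r} as double integrals against $\phi$ and differentiating via Leibniz produces the boundary terms $\int_{-\infty}^r(\cdot)\phi(s-t,x,r)\,dx$, while $\partial_r(e^{-r}\cdot)$ contributes $-v_r$; recombining yields \eqref{eq:v-rr} exactly.

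Finally, the claimed $C^{1,2,2}$ regularity of $v$ and of $v_r$ on $\overline{\mathcal{D}}_T$ follows from the smoothness of the integrands in the explicit formulas just established: linearity of $Z_s^{t,z}$ in $z$ yields the $z$-derivatives, smoothness of $\phi(s-t,x,r)$ in $r$ combined with the $e^{-r}$ prefactor yields the $r$-derivatives, and smooth dependence on $t$ of the integration limit and the drift/covariance parameters (all entering through $s-t$) yields $v_t$, with all derivatives commuting with the expectation by dominated convergence. The principal technical obstacle is the careful identification of $\phi$ as the correct joint density, where the sign conventions are delicate because $\tilde\sigma = -\mu/\sigma < 0$, and the reduction of the $Z$-correlated expectation to the clean exponential form involving $\kappa$; once this density calculus is set up, all derivative formulas follow by Leibniz differentiation under a uniformly integrable integrand.
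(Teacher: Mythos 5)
Your proposal is correct and follows essentially the same route as the paper's proof: the Skorokhod decomposition $R_s^{t,r}=r+\tilde R_s^t+(M_s^t-r)_+$ giving $\partial_r R_s^{t,r}=\idc_{\{s<\tau_r^t\}}$, and the identification of $\phi(s-t,\cdot,\cdot)$ as the joint law of the drifted Brownian motion and its running maximum (checking $\tilde\mu=\alpha/\beta$, $\tilde\sigma^2=\beta^2$ with $\alpha=\mu^2/(2\sigma^2)-\rho$, $\beta=\mu/\sigma$), are exactly the calculations the paper delegates to Proposition 4.1 of \cite{BoLiaoYu21} and Lemma 3.4 of \cite{BHY24a} after splitting $v=zh+l$. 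The one place the paper is more explicit than you is the $t$-regularity, which it obtains via a finite-difference limit using the conditional density of the reflected drifted Brownian motion rather than a bare appeal to dominated convergence, but this is a presentational rather than substantive difference.
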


\begin{proof}
For $(t,r)\in [0,T]\times \R_+$, let us define 
\begin{align}
h(t,r)&:=-(1-\lambda)\eta \Ex\Bigg[ \int_t^T\exp\Bigg(-\rho s-R^{t,r}_s+\left(\mu_Z-\frac{\sigma_Z^2}{2}\right)(s-t)+\sigma_Z(W_s-W_t)\Bigg) ds\Bigg],\label{eq:h}\\
l(t,r)&:=-\lambda\Ex\left[ \int_t^Te^{-\rho s-R^{t,r}_s}f(s) ds\right].\label{eq:l}
\end{align}
Then, $v(t,r,z)=zh(t,r)+l(t,r)$ for $(t,r,z)\in \overline{\mathcal{D}}_T$. By a similar calculation as in the proof of Proposition  4.1 in \cite{BoLiaoYu21} and in the proof of Lemma 3.4 in \cite{BHY24a}, we can obtain 
\begin{align}
l_r(t, r)&=\lambda\mathbb{E}\left[\int_t^{\tau_r^t \wedge T} e^{-\rho s-R_s^{t, r}} f(s)ds\right].\label{eq:l-r-1}\\
zh_r(t, r)&=(1-\lambda)\eta z\Ex\Bigg[ \int_t^{\tau_r^t \wedge T}\exp\Bigg(-\rho s-R^{t,r}_s+\left(\mu_Z-\frac{\sigma_Z^2}{2}\right)(s-t)+\sigma_Z(W_s-W_t)\Bigg) ds\Bigg].\nonumber\\
l_{rr}(t,r)&=\lambda  \int_{t}^{T}\int_{-\infty}^{r} e^{-\rho s-r+x}\phi(s-t,x,r)f(s)dx ds-\lambda  \Ex\left[\int_{t}^{\tau_r\wedge T} e^{-\rho s-R_s^{t,r}}f(s)ds\right].\label{eq:l-r-2}\\
zh_{rr}(t,r)&=(1-\lambda)\eta z\int_{t}^{T}\int_{-\infty}^{r} \exp\Bigg(-\rho s-r+\left(1-\frac{\sigma\sigma_Z}{\mu}\right)x+\kappa(s-t)\Bigg) \phi(s-t,x,r)dx ds\nonumber\\
&-(1-\lambda)\eta z \Ex\Bigg[ \int_t^{\tau_r^t \wedge T}\exp\Bigg(-\rho s-R^{t,r}_s+\left(\mu_Z-\frac{\sigma_Z^2}{2}\right)(s-t)+\sigma_Z(W_s-W_t)\Bigg) ds\Bigg],\\
l_{rrr}(t,r)&=-2\lambda  \int_{t}^{T}\int_{-\infty}^{r} e^{-\rho s-r+x}\phi(s-t,x,r)f(s)dx ds+\lambda  \Ex\left[\int_{t}^{\tau_r\wedge T} e^{-\rho s-R_s^{t,r}}f(s)ds\right]\label{eq:l-r-3}\\
&+\lambda  \int_{t}^{T} e^{-\rho s}\phi(s-t,r,r)f(s) ds+\lambda  \int_{t}^{T}\int_{-\infty}^{r} e^{-\rho s-r+x}\phi_r(s-t,x,r)f(s)dx ds,\nonumber\\
h_{rrr}(t,r)&=-2(1-\lambda)\eta\int_{t}^{T}\int_{-\infty}^{r} \exp\Bigg(-\rho s-r+\left(1-\frac{\sigma\sigma_Z}{\mu}\right)x+\kappa(s-t)\Bigg) \phi(s-t,x,r)dx ds\nonumber\\
&\quad+(1-\lambda)\eta \Ex\Bigg[ \int_t^{\tau_r^t \wedge T}\exp\Bigg(-\rho s-R^{t,r}_s+\left(\mu_Z-\frac{\sigma_Z^2}{2}\right)(s-t)+\sigma_Z(W_s-W_t)\Bigg) ds\Bigg]\nonumber\\
&\quad+(1-\lambda)\eta\int_{t}^{T} \exp\left(-\rho s-\frac{\sigma\sigma_Z}{\mu}r+\kappa(s-t)\right)\times \phi(s-t,r,r)dx ds\nonumber\\
&\quad-(1-\lambda)\eta\int_{t}^{T}\int_{-\infty}^{r} \exp\Bigg(-\rho s-r+\left(1-\frac{\sigma\sigma_Z}{\mu}\right)x+\kappa(s-t)\Bigg) \phi_r(s-t,x,r)dx ds.\label{eq:h-r-3}
\end{align}
From \eqref{eq:l-r-1}-\eqref{eq:h-r-3}, we deduce \eqref{eq:v-r}-\eqref{eq:v-rr} and that $v_{rrr}=l_{rrr}+zh_{rrr}$ is continuous in $(t,r,z)$. We next derive the expression of $v_z,v_{zz},v_{rz}$ and $v_{rrz}$. Note that  $v(t,r,z)=zh(t,r)+l(t,r)$. Then, for $(t,r,z)\in\overline{\mathcal{D}}_T$,
\begin{align}
&v_z(t,r,z)=h(t,r),\quad v_{zz}(t,r,z)=0,\label{eq:v-z}\\
&v_{rz}(t,r,z)=h_r(t,r,z)\quad v_{rrz}(t,r,z)=h_{rr}(t,r,z).\label{eq:v-z2}
\end{align}
Finally, we derive the expression of $v_t$ and $v_{rt}$. For any $\delta\in[-t,T-t]$, it holds that
\begin{align*}
l(t,r)&=-\lambda \Ex\left[\int_{t}^T e^{-\rho s-R_s^{t,r}}f(s)ds\right]=-\lambda \Ex\left[\int_{t}^{T}e^{-\rho s-R_{s+\delta}^{t+\delta,r}}f(s)ds\right].
\end{align*}
Then, we get
\begin{align*}
&\lim_{\delta\to 0}\frac{l(t+\delta,r)-l(t,r)}{\delta}\\
&=-\lambda \lim_{\delta\to 0}\frac{1}{\delta}\left\{ \Ex\left[\int_{t+\delta}^{T} e^{-\rho s-R_{s}^{t+\delta,r}}f(s)ds\right]-\Ex\left[ \int_t^Te^{-\rho s-R^{t+\delta,r}_{s+\delta}}f(s)ds\right] \right\}\\
&=\lambda\lim_{\delta\to 0}\frac{1}{\delta}\Ex\left[\int_t^{t+\delta}e^{-\rho s-R_{s}^{t+\delta,r}}f(s)ds\right]-\lambda  \int_{t}^{T} e^{-\rho s}f(s)\lim_{\delta\to 0}\left\{\frac{1}{\delta}\Ex\left[e^{-R_{s}^{t+\delta,r}}-e^{-R^{t+\delta,r}_{s+\delta}}\right]\right\}ds\\
&=\lambda e^{-\rho t-r}f(t)-\lambda  \int_{t}^{T} e^{-\rho s}f(s)\lim_{\delta\to 0}\left\{\frac{1}{\delta}\Ex\left[e^{-R_{s}^{t+\delta,r}}-e^{-R^{t+\delta,r}_{s+\delta}}\right]\right\}ds.
\end{align*}
By applying It\^o's rule and taking expectation, we have $\Ex[e^{-R_{s}^{t+\delta,r}}-e^{-R^{t+\delta,r}_{s+\delta}}]=-\rho \delta-\Ex[\int_{s+\delta}^s  dL_{\ell}^{t+\delta,r}]$. Using the dominated convergence theorem and Proposition~2.5 in \cite{Abraham2000}, we arrive at
\begin{align}\label{eq:l-t-1}
l_t(t,r)&=\lambda e^{-\rho t-r}f(t)+\lambda  \int_{t}^{T} e^{-\rho s}f(s)(\rho-p(t,r;s,0))ds,
\end{align}
where $p(t,r;s,x)$ is the conditional density function of the reflected drifted Brownian process $R^{t,r}$ (c.f. \cite{Vee2004}). Similar to the calculation above, we can also get
\begin{align}\label{eq:h-t-1}
zh_t(t,r)
=(1-\lambda)\eta \Bigg\{&\Ex\left[ \int_{t}^{T} e^{-\rho s-R^{t,r}_{s}} Z_{s}^{t,z} ds\right]+\Ex\left[  e^{-\rho T-R^{t,r}_{T}} Z_{T}^{t,z} \right]\Bigg\}.
\end{align}
We also have from \eqref{eq:v-r} that
\begin{align}\label{eq:v-t-2}
&v_{rt}(t,r)=-\lambda \int_t^T \int_0^r \int_{-\infty}^y e^{-\rho s-r+x}\phi_s(s-t,x,y)dsdyds\\
&\quad-(1-\lambda)\eta z\int_{t}^{T}\int_0^r \int_{-\infty}^{y} \exp\left(-\rho s-r+\left(1-\frac{\sigma\sigma_Z}{\mu}\right)x+\kappa(s-t)\right) \phi_s(s-t,x,y)dx dyds.\nonumber
\end{align}
By \eqref{eq:l-r-1}, \eqref{eq:l-r-2}, \eqref{eq:v-z} \eqref{eq:l-t-1} and \eqref{eq:h-t-1}, we have $v \in {\C}^{1,2}(\overline{\mathcal{D}}_T)$. Using \eqref{eq:l-r-2}, \eqref{eq:l-r-3}, \eqref{eq:h-r-3}, \eqref{eq:v-z} and \eqref{eq:v-t-2}, we further have $v_r \in {\C}^{1,2}(\overline{\mathcal{D}}_T)$.
\end{proof}

Building upon Lemma \ref{lem:derivative-v}, we have the following result:
\begin{proposition}\label{prop:v}
{\it Let $f\in {\C}([0,T])$ satisfy $f(t)> 0$ for all $t\in[0,T]$. Then, the function $v$ defined by \eqref{eq:v} is the unique classical solution to the Neumann boundary problem \eqref{eq:HJB-v} satisfying $|v(t,r,z)|\leq C(1+z)$ for some constant $C>0$.} 
\end{proposition}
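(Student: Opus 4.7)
The plan is to verify that $v$ as defined by \eqref{eq:v} satisfies \eqref{eq:HJB-v} via a Feynman--Kac-type argument made rigorous by Lemma \ref{lem:derivative-v}, then prove uniqueness by reversing the same argument and conclude with a direct growth estimate. The regularity $v\in{\C}^{1,2,2}(\overline{\mathcal{D}}_T)$ is already granted by Lemma \ref{lem:derivative-v}, which is what allows Itô's formula to be applied without additional smoothing.

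First, I would handle the two boundary/terminal conditions directly from the probabilistic representation. The terminal condition $v(T,r,z)=0$ is immediate from \eqref{eq:v} since both integrals are over the empty interval $[T,T]$. For the Neumann condition $v_r(t,0,z)=0$, I would use the expression \eqref{eq:v-r}: when $r=0$, the stopping time $\tau_0^t$ equals $t$, since $R^{t,0}$ starts at the reflecting barrier, so $\tau_0^t\wedge T=t$ and both conditional expectations in \eqref{eq:v-r} vanish.

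Next, I would verify the interior PDE. Fix $(t,r,z)\in\overline{\mathcal{D}}_T$ and consider the process
\begin{align*}
M_s:=v(s,R_s^{t,r},Z_s^{t,z})+\int_t^s\left((1-\lambda)\eta Z_u^{t,z}+\lambda f(u)\right)e^{-\rho u-R_u^{t,r}}du,\quad s\in[t,T].
\end{align*}
By the Markov property of the pair $(R^{t,r},Z^{t,z})$ and the definition \eqref{eq:v}, $M$ is a martingale. Applying Itô's formula to $v(s,R_s^{t,r},Z_s^{t,z})$, using $\D R_s^{t,r}=(\frac{\mu^2}{2\sigma^2}-\rho)ds+\frac{\mu}{\sigma}dW_s+dL_s^{t,r}$, $\D Z_s^{t,z}=\mu_Z Z_s^{t,z}ds+\sigma_Z Z_s^{t,z}dW_s$, and the quadratic covariation $d\langle R^{t,r},Z^{t,z}\rangle_s=\frac{\mu\sigma_Z}{\sigma}Z_s^{t,z}ds$, the finite-variation part of $M_s$ becomes the sum of a Lebesgue-integral against $ds$ and the local-time term $v_r(s,R_s^{t,r},Z_s^{t,z})dL_s^{t,r}$. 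Since $M$ is a martingale, both pieces must vanish. The first forces the PDE in \eqref{eq:HJB-v} to hold pointwise on $[0,T)\times(0,\infty)\times\R_+$, and the second, combined with the fact that $L^{t,r}$ grows only on $\{R_s^{t,r}=0\}$, forces the Neumann condition $v_r(\cdot,0,\cdot)=0$.

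For uniqueness, let $\tilde v\in{\C}^{1,2,2}(\mathcal{D}_T)\cap{\C}(\overline{\mathcal{D}}_T)$ be another classical solution with the same linear growth in $z$. Applying Itô's formula to $\tilde v(s,R_s^{t,r},Z_s^{t,z})$ and using the PDE and Neumann condition exactly as above but in reverse, I would obtain
\begin{align*}
\tilde v(t,r,z)=\Ex\left[\tilde v(T,R_T^{t,r},Z_T^{t,z})\right]-\Ex\left[\int_t^T\left((1-\lambda)\eta Z_s^{t,z}+\lambda f(s)\right)e^{-\rho s-R_s^{t,r}}ds\right],
\end{align*}
which together with $\tilde v(T,\cdot,\cdot)=0$ gives $\tilde v\equiv v$. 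The linear growth in $z$ together with $\Ex[\sup_{s\in[t,T]}Z_s^{t,z}]<\infty$ justifies the usual localization argument to remove the local-martingale status and push the expectation through. Finally, the estimate $|v(t,r,z)|\le C(1+z)$ follows from \eqref{eq:v} by bounding $e^{-R_s^{t,r}}\le1$, using boundedness of $f$ on $[0,T]$, and $\Ex[Z_s^{t,z}]=ze^{\mu_Z(s-t)}$.

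The main technical obstacle is justifying that the local-martingale parts in the Itô expansion are true martingales and handling the stochastic integrals against $dW_s$ in the presence of the reflection term $dL_s^{t,r}$. This is addressed by a standard localization by stopping times $\tau_n=\inf\{s\ge t:|R_s^{t,r}|+Z_s^{t,z}\ge n\}\wedge T$ and passage to the limit via dominated convergence, relying on the linear growth of $v$ in $z$, the boundedness of $v_r$ (from \eqref{eq:v-r}), and standard moment estimates for GBM and reflected Brownian motion.
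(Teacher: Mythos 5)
Your plan is the standard Feynman--Kac verification, and it is structurally sound; note, however, that the paper does not actually write out a proof of this proposition --- it states it as following from Lemma \ref{lem:derivative-v}, which supplies the $\C^{1,2,2}$ regularity and the explicit formulas for $v_r$, $v_{rr}$, $v_z$, $v_{rz}$, $v_t$ that would equally well permit a direct plug-in verification of \eqref{eq:HJB-v}. So you are filling in a gap the authors left to the reader, and either route (martingale argument or direct differentiation and substitution) is acceptable; the martingale route is arguably cleaner since it avoids manipulating the unwieldy density-kernel expressions.

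Two small slips to fix. First, the sign in your definition of $M_s$ is wrong. Since $v(t,r,z)=-\Ex\big[\int_t^T(\lambda f(u)+(1-\lambda)\eta Z_u^{t,z})e^{-\rho u-R_u^{t,r}}du\big]$, the tower property gives that
\begin{align*}
v(s,R_s^{t,r},Z_s^{t,z})-\int_t^s\big(\lambda f(u)+(1-\lambda)\eta Z_u^{t,z}\big)e^{-\rho u-R_u^{t,r}}du
=\Ex\left[-\int_t^T(\cdots)\,du\,\Big|\,\mathcal{F}_s\right]
\end{align*}
is the martingale; with your plus sign the process $M_s$ is \emph{not} a martingale (it equals $A_s-B_s=2A_s-(A_s+B_s)$ with $A_s$ increasing), so the identification of vanishing drift would fail. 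This is also consistent with the sign convention in \eqref{eq:HJB-v}, where the source term enters with a minus. Second, in the localization step you invoke ``boundedness of $v_r$,'' but from \eqref{eq:v-r} the quantity $v_r$ only satisfies $|v_r(t,r,z)|\le C(1+z)$ (linear growth in $z$, uniform in $(t,r)$), not global boundedness; fortunately linear growth in $z$ together with $\Ex[\sup_{s\le T}Z_s^{t,z}]<\infty$ is exactly what the dominated-convergence localization requires, so the argument survives once you state the correct estimate. Your observation that the Neumann boundary condition at $r=0$ follows from \eqref{eq:v-r} because $\tau_0^t=t$ is correct and is the cleanest way to see it.
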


{\it Proof of Proposition \ref{thm:dual-u}:}\quad It follows from Proposition \ref{prop:v} that  $\hat{u}(t,y,z)$ is the  unique classical solution of the dual equation \eqref{eq:dual-u} satisfying $|\hat{v}(t,y,z)|\leq K(1+z)$ for some $K>0$ and $\hat{u}_y \in {\C}^{1,2}([0,T]\times (0,1]\times\R_+)$. Furthermore, the strict convexity of $(0,1] \ni y \mapsto \hat{u}(t,y)$ for fixed $t \in[0, T) $ follows from the fact that 
\begin{align*}
&\hat{u}_{y y}=\frac{e^{\rho t}}{y^2}(v_{rr}+v_r)=\frac{\lambda e^{\rho t}}{y^2} \int_{t}^{T}\int_{-\infty}^{r} e^{-\rho s-r+x}\phi(s-t,x,r)f(s)dx ds\\
&\qquad+\frac{(1-\lambda)\eta e^{\rho t}}{y^2}\int_{t}^{T}\int_{-\infty}^{r} \exp\left(-\rho s-r+\left(1-\frac{\sigma\sigma_Z}{\mu}\right)x+\kappa(s-t)\right) \phi(s-t,x,r)dx ds>0.
\end{align*} 
Thus, we complete the proof of the proposition, \hfill$\blacksquare$\\

{\it Proof of Theorem \ref{thm:verification}:}\quad
The proof of the item  (i) is similar to that of Theorem 5.1 in \cite {BoLiaoYu21}, and hence we omit it here. Next, we show the item (ii). It is not difficult to see that
\begin{align}\label{eq:theta-1}
&\lim_{y\downarrow 0}z \hat{u}_{yz}(t,y,z)=-\lim_{y\downarrow 0}\frac{e^{\rho t}z}{y}v_{rz}(t,-\ln y,z)\nonumber\\
&\quad=-\lim_{r\to +\infty}e^{\rho t+r}zv_{rz}(t,r,z)\nonumber\\
&\quad=-(1-\lambda)\eta\mathbb{E}\left[\int_t^{T} e^{-\frac{\mu}{\sigma}\left(W_s-W_t\right)-\frac{\mu^2}{2\sigma^2}(s-t)}  Z_s^{t,z}ds\right]\nonumber\\
&\quad=-(1-\lambda)\left(e^{\eta (T-t)}-1\right)z.
\end{align}
In view of $\mu>0$, \eqref{eq:v-rr} and $\hat{u}_{yy}\geq 0$ from Proposition \ref{thm:dual-u}, we have
\begin{align}\label{eq:estimate-theta-1}
&0\leq\liminf_{y\downarrow 0} y\hat{u}_{yy}(t,y,z)\leq \limsup_{y\downarrow 0} y\hat{u}_{yy}(t,y,z)=\limsup_{r\to +\infty} e^{\rho t+r}(v_{rr}+v_r)(t,r,z).
\end{align}
We obtain from Lemma \ref{lem:derivative-v} that
\begin{align}\label{eq:estimate-theta-2}
&e^{r}(v_{rr}+v_r)(t,r,z)\nonumber\\
&\quad=\lambda \int_{t}^{T}\int_{-\infty}^{r} e^{-\rho s+x}\phi(s-t,x,r)f(s)dx ds\\
&\qquad +(1-\lambda)\eta z\int_{t}^{T}\int_{-\infty}^{r} \exp\left(-\rho s+\left(1-\frac{\sigma\sigma_Z}{\mu}\right)x+\kappa(s-t)\right) \phi(s-t,x,r)dx ds.\nonumber
\end{align}
In the sequel, let $C>0$ be a generic constant independent of $(t,x,z)$, which may differ from line to line. For the case $r>2$, it holds that $(r+y)^2-1>0$ for all $y\geq0$. As $f\in\C([0,T])$, it follows that, for all $(t,r)\in[0,T]\times (2,\infty)$,
\begin{align}\label{eq:estimate-theta-3}
&\int_{t}^{T}\int_{-\infty}^{r} e^{-\rho s+x}\phi(s-t,x,r)f(s)dx\leq C \int_{t}^{T}\int_{-\infty}^{r} e^{-\rho s+x}\phi(s-t,x,r)dx ds\nonumber\\
&\overset{y=r-x}{=}C\int_{t}^{T}\int_{0}^{\infty} e^{-\rho s+r-y}\frac{2( r+y)}{\hat{\sigma}^2\sqrt{2\hat{\sigma} \pi (s-t)^3}}\exp\left(\frac{\hat{\mu}}{\hat{\sigma}}(r-y)-\frac{1}{2} \hat{\mu}^2 (s-t)-\frac{( r+y)^2}{2\hat{\sigma}^2 (s-t)}\right)dyds\nonumber\\
&=C\int_{t}^{T}\frac{2\exp\left(-\frac{1}{2\hat{\sigma}^2 (s-t)}\right)}{\hat{\sigma}^2\sqrt{2\hat{\sigma} \pi (s-t)^3}}e^{-\rho s}ds\int_{0}^{\infty} ( r+y)\exp\left(\frac{\hat{\mu}}{\hat{\sigma}} (r-y)+r-\frac{( r+y)^2-1}{2\hat{\sigma} ^2(T-t)}\right)dy\nonumber\\
&\leq C r\exp\left(\left(\frac{\hat{\mu}}{\hat{\sigma}}+1\right)r-\frac{r^2-1}{2\hat{\sigma}^2 (T-t)}\right)\int_{0}^{\infty} ( 1+y)\exp\left(-\frac{y^2}{2\hat{\sigma}^2 (T-t)}-\frac{\hat{\mu}}{\hat{\sigma}}y\right)dy\nonumber\\
&\leq Cr\exp\left(\frac{\hat{\mu}}{\hat{\sigma}}r+r-\frac{r^2-1}{2\hat{\sigma}^2 (T-t)}\right).
\end{align}
Since the mapping $r\to r\exp(\frac{\hat{\mu}}{\hat{\sigma}} r+r-\frac{r^2-1}{2\hat{\sigma}^2 (T-t)})$ is continuous, and
$\lim_{r\to\infty}r\exp(\frac{\hat{\mu}}{\hat{\sigma}}r +r-\frac{r^2-1}{2\hat{\sigma}^2 (T-t)})=0$, we have 
\begin{align}\label{eq:estimate-theta-4}
&\limsup_{r\to+\infty}\int_{t}^{T}\int_{-\infty}^{r} e^{-\rho s+x}\phi(s-t,x,r)f(s)dx\leq 0.
\end{align}
As the mapping $r\to r\exp(\frac{\hat{\mu}}{\hat{\sigma}} r+r-\frac{r^2-1}{2\hat{\sigma}^2 (T-t)})$ is continuous, and
$\lim_{r\to\infty}r\exp(\frac{\hat{\mu}}{\hat{\sigma}}r +r-\frac{r^2-1}{2\hat{\sigma}^2 (T-t)})=0$, we have 
\begin{align}\label{eq:estimate-theta-6}
&\limsup_{r\to+\infty}\int_{t}^{T}\int_{-\infty}^{r} e^{-\rho s+x}\phi(s-t,x,r)f(s)dx\leq 0.
\end{align}
In a similar fashion to derive \eqref{eq:estimate-theta-3}, we can also get that
\begin{align}\label{eq:estimate-theta-5}
&\limsup_{r\to+\infty}\int_{t}^{T}\int_{-\infty}^{r} \exp\left(-\rho s+\left(1-\frac{\sigma\sigma_Z}{\mu}\right)x+\kappa(s-t)\right) \phi(s-t,x,r)dx ds\leq 0.
\end{align}
Therefore, in view of \eqref{eq:estimate-theta-1} \eqref{eq:estimate-theta-2}, \eqref{eq:estimate-theta-4} and \eqref{eq:estimate-theta-5}, we deduce that
\begin{align}\label{eq:theta-2}
\lim_{y\downarrow 0} y\hat{u}_{yy}(t,y,z)=0.
\end{align}
Together with \eqref{eq:theta-1}, the  feedback control function $\theta^*(t,x,z)$ give by \eqref{eq:optimal-theta} is well-defined.
Moreover, we can see that the function $(x,z) \to \theta^*(t,x,z)$ is locally Lipschitz continuous and satisfies the liner growth with respect to $z$.
The SDE \eqref{eq:optimal-SDE} therefore admits a unique strong solution. Consequently, it is straightforward to verify that $\theta^*\in\mathbb{U}$.

 For any $\theta\in \mathbb{U}$, let $X=(X_t)_{t\in[ 0,T]}$ be the resulting state process. For $n\in \mathbb{N}$ and $1/n<T$, define the stopping time $\tau_n$ by $\tau_n:=\left(T-\frac{1}{n}\right)\wedge\inf \left\{s\geq 0: X_s \wedge Z_s\geq n\right\}$. By applying It\^o's lemma to $e^{-\rho \tau_n}u(\tau_n,X_{\tau_n},Z_{\tau_n})$ and  taking the expectation on both sides, we deduce
\begin{align}\label{eq:value-ineq}
&\Ex\left[e^{-\rho (\tau_n-t)}u(\tau_n,X_{\tau_n},Z_{\tau _n})\right]+\mathbb{E}\left[-\int_t^{\tau_n}e^{-\rho (s-t)}dL_s^X\right]\leq u(t,x,z),
\end{align}
and the equality holds when $\theta=\theta^*$. In view of Proposition \ref{thm:dual-u} and \eqref{eq:sol-u}, we know that $u(t,x,z)\leq 0$ on $\overline{\mathcal{D}}_T$ and $u(t,x,z)=\inf_{y\in(0,1]}\{\hat{u}(t,y,z)+yx\}\geq \inf_{y\in(0,1]}\{\hat{u}(t,y,z)\}\geq -C(1+z)$ on $\mathcal{O}_T$ for some $C>0$. This yields that $|u(t,x,z)|\leq C(1+z)$, for all $(t,x,z)\in\overline{\mathcal{D}}_T$.  Letting $n \rightarrow \infty$ on both sides of \eqref{eq:value-ineq}, by using the dominated convergence theorem and the monotone convergence theorem, we get the desired result. \hfill$\blacksquare$


By using Lemma \ref{lem:derivative-v} and Theorem \ref{thm:verification}, we can directly get Lemma  \ref{lem:u_x}. Then, we here provide the proofs of Lemma \ref{lem:dual-process}, Lemma \ref{lem:fixpoint-r} and Lemma \ref{lem:continuity-f}.

{\it Proof of Lemma \ref{lem:dual-process}:}\quad 
We first consider the case where $(0,x,z)\in {\cal O}_T$ and ${\cal O}_T\neq\varnothing$. In view of Lemma \ref{lem:u_x}, the partial derivative of the value function $u$ with respect to $x$ satisfies that  $u_x\in\C^{1,2}({\cal O}_T)$.  For any $t\in [0,T)$ and $\varepsilon\in(0,1)$, by applying It\^o's formula to $Y_{t\wedge (1-\varepsilon)\tau_0}=u_x({t\wedge (1-\varepsilon)\tau_0},X_{t\wedge (1-\varepsilon)\tau_0}^*,Z_{t\wedge (1-\varepsilon)\tau_0})$, we obtain
\begin{align}\label{eq:ito-Y-1}
&Y_{t\wedge (1-\varepsilon)\tau_0}=Y_0+\int_0^{{t\wedge (1-\varepsilon)\tau_0}} {\cal L}^{\theta_s^*} u_x(s,X_{s}^*)ds+\int_0^{{t\wedge (1-\varepsilon)\tau_0}}  u_{xx}(s,X_{s}^*)dL_s^X\nonumber\\
&\quad+\int_0^{{t\wedge (1-\varepsilon)\tau_0}} ((\theta_s^*\sigma-(1-\lambda)\sigma_Z Z_s )u_{xx}(s,X_{s}^*,Z_s)+\sigma_Z Z_s u_{xz}(s,X_{s}^*,Z_s) ) dW_s,
\end{align}
where  the operator ${\cal L}^{\theta}$ with $\theta\in \R$  acting on $g\in C^{1,2}(\mathcal{D}_T)$  is defined by
\begin{align*}
{\cal L}^{\theta}g&:=g_t+g_x\mu\theta+\frac{1}{2}g_{xx}(\sigma\theta-(1-\lambda)\sigma_Zz)^2+\frac{1}{2}\sigma_Z^2z^2g_{zz}\\
&\quad+g_{xz}(\sigma\theta-(1-\lambda)\sigma_Zz)\sigma_Z z+\mu_Z zg_z -((1-\lambda)\mu_Zz+\lambda f(t)) g_x.
\end{align*}
 We can obtain from \eqref{eq:optimal-theta} that
\begin{align}\label{eq:ito-Y-2}
& (\theta_s^*\sigma-(1-\lambda)\sigma_Z Z_s )u_{xx}(s,X_{s}^*,Z_s)+\sigma_Z Z_s u_{xz}(s,X_{s}^*,Z_s)\nonumber\\
&\quad =- \frac{\mu}{\sigma} u_{x}(s,X_{s}^*,Z_s)=- \frac{\mu}{\sigma}Y_s.
\end{align}
It follows from Theorem \ref{thm:verification} that $({\cal L}^{\theta_s^*} -\rho)u(t,x,z)=0$ for $(t,x,z)\in  {\cal O}_T$. Taking the derivative of $x$ on both sides of the equation, we have
\begin{align}\label{eq:ito-Y-3}
&\int_0^{{t\wedge (1-\varepsilon)\tau_0}}  {\cal L}^{\theta_s^*} u_x(s,X_{s}^*,Z_s)ds=\int_0^{{t\wedge (1-\varepsilon)\tau_0}} \rho  u_x(s,X_{s}^*,Z_s)ds=\int_0^{{t\wedge (1-\varepsilon)\tau_0}}  \rho Y_sds.
\end{align}
Denote by $L_t^Y:=-\int_0^{t}  u_{xx}(s,X_{s}^*,Z_s)dL_s^X$ for $t\in[0,T]$. Consequently, it follows from \eqref{eq:ito-Y-1}-\eqref{eq:ito-Y-3} and the arbitrariness of $\varepsilon\in(0,1)$ that
\begin{align*}
dY_t= \rho Y_tdt-\frac{\mu}{\sigma}Y_tdW_t-dL_t^Y,\quad t\in(0,\tau_0).
\end{align*}
Note that $u_x(t,x,z)\leq 1$, $u_x(t,0,z)= 1$ and $u_{xx}(t,x,z)\leq 0$ for all $(t,x,z)\in{\cal O}_T$. Then, the process $L=(L_t^Y)_{t\in[0,\tau_0)}$ is a continuous and non-decreasing process (with $L_0^{Y}=0$)  which increases on the time set $\{t\in[0,\tau_0);~Y_t=1\}$ only. This implies that  $Y$ taking values on $(0,1]$ is a reflected process and $L^Y$ is the local time process of $Y$. 

Next, we claim that $Y_t=e^{-R_t^{0,r}}$ on $[0,\tau_0)$, where  the reflected process $(R_t^{0,r})_{t\in [0,T]}$ is given by \eqref{eq:R} with $r=-\ln Y_0=-\ln v_x(0,x,z)$. Let $\tilde{Y}_t:=e^{-R_t^{0,r}}$ for $t\in[0,T]$. Then, by using It\^o's rule, $\tilde{Y}=(\tilde{Y}_t)_{t\in[0,T]}$ satisfies the following dynamics given by
\begin{align*}
d\tilde{Y}_t= \rho \tilde{Y}_tdt-\frac{\mu}{\sigma}\tilde{Y}_tdW_t-dL_t^{\tilde{Y}},
\end{align*}
where $L_t^{\tilde{Y}}=\int_0^t e^{-R_t^{0,r}} dL_s^{0,r}=\int_0^t  dL_s^{0,r}=L_t^{0,r}$. We can see that $L^{\tilde{Y}}=(L_t^{\tilde{Y}})_{t\in [0,T]}$ is a continuous and non-decreasing process that increases only on $\{t \in[0, T] ;~R_s^{0,r}=0\}=\{t \in[0, T] ;~\tilde{Y}_s=1\}$ with $L_0^{\tilde{Y}}=0$. This yields that $\tilde{Y}$ is a  the reflected process and $L^{\tilde{Y}}$ is the corresponding local time process. Noting that $\tilde{Y}$ and $Y$ satisfy the same reflected SDE, by the uniqueness of the strong solution, we deduce that $Y_t=\tilde{Y}_t=e^{-R_t^{0,r}}$ on $[0,\tau_0)$. Thus, it holds that, $\Px$-a.s.
\begin{align*}
\tau_0&=\inf\{t\in[0,T];~ X_t^*\geq x_0(t,Z_t)\}\wedge T\\
&=\inf\{t\in[0,T];~ v_x(t,X_t^*,Z_t)\leq0\}\wedge T\nonumber\\
&=\inf\{t\in[0,T];~ Y_t\leq 0\}\wedge T\\
&=\inf\{t\in[0,T];~ e^{-R_t^{0,r}}\leq 0\}\wedge T=T.
\end{align*}
Then, it is not hard to see that $Y$ satisfies the dynamics \eqref{eq:Yt} on $(0,T]$ with $Y_0=v_x(0,x,z)$.

We next let $(0,x,z)\in {\cal O}_T^c\cap \overline{\mathcal{D}}_T$. In this case, we have $x\leq x_0(0,z)$, and hence $\tau_0=0$, $\Px$-a.s., and the value function  $u(0,x,z)=0$. Introduce ${\cal M}:=\{\omega\in\Omega;~(t,X_t^*,Z_t)\in  {\cal O}_T^c\cap \overline{\mathcal{D}}_T\}$. Assume that there exists some $t_0\in(0,T]$ such that $\Px({\cal M})>0$. Then, it follows from the dynamical program principle and the fact $u(t,X_t^*,Z_t)<0$ on ${\cal M}$ that
\begin{align*}
0=u(0,x,z)&=\Ex\left[-\int_0^{t}e^{-\rho s}dL_s^{X^*}+e^{-\rho t}u(t,X_t^*,Z_t)\right]\\
&\leq\Ex\left[e^{-\rho t}u(t,X_t^*,Z_t){\bf 1}_{{\cal M}}\right]<0,
\end{align*}
which yields a contradiction. Thus, for any $t\in[0,T]$, we have $(t,X_t^*,Z_t)\in  {\cal O}_T^c\cap \overline{\mathcal{D}}_T$, $\Px$-a.s., which completes the proof of the lemma. \hfill$\blacksquare$\\

{\it Proof of Lemma \ref{lem:fixpoint-r}:}\quad 
In view of \eqref{eq:G} and \eqref{eq:H}, the functions $G(r,s,t)$ and $H(r,z,t)$ are non-negative and continuous.  Fix $(r,z)\in \R_+^2$. Choose $N>0$ with $0=t_0<t_1<t_2<\cdots<t_N=T$ and $t_{i+1}-t_i=T/N$ for $i=0,\ldots,N-1$ such that
\begin{align*}
M:=\sup_{i=0,...,N-1}  \sup_{t \in  [t_i,t_{i+1}]} \int_{t}^{t_{i+1}}G(r,s,t) ds<1.
\end{align*}
For $t\in [t_{N-1},t_N]$ and $f_1,f_2\in C([t_{N-1},t_N])$, we have 
\begin{align*}
\left|{\cal J}f_1(t)-{\cal J}f_2(t)\right|
&=\left|\int_t^T \left(f_1(s)-f_2(s)\right) G(r,s,t)ds\right|\\
&\leq \int_t^T \left|f_1(s)-f_2(s)\right|G(r,s,t)ds\\
&\leq \sup_{s\in[t,T]}|f_1(s)-f_2(s)| \int_t^T G(r,s,t)ds\\
&\leq M \|f_1-f_2\|_{[t_{N-1},t_N]},
\end{align*}
where $\|f\|_{[t_{N-1},t_N]}:=\sup_{t\in [t_{N-1},t_N]}|f(t)|$ for $f\in\C([t_{N-1},t_N])$ is the uniform norm of continuous function on  $[t_{N-1},t_N]$. This implies that ${\cal J}$ is a contraction map on $\C([t_{N-1},t_N])$, and hence has a unique fixed point $f^N$ on $[t_{N-1},t_N]$. Furthermore, if we start with a non-negative continuous function $f_0\in\C([t_{N-1},t_N])$, and consider the sequence $(f_n)_{n\geq 1}$ with $f_n:={\cal J}f_{n-1}$. It follows from  $G(r,s,t)\geq 0$ and $H(r,z,t)\geq 0$ that $f_n$ is also a non-negative function, for any $n\geq 1$. As $f_n$ converges to the fixed point $f^N$ as $n\to\infty$, we know $f^N(t)\geq 0$ on $[t_{N-1},t_N]$. On $[t_{N-2},t_{N-1}]$, let us consider the mapping ${\cal J}^{N-1}:\C([t_{N-2},t_{N-1}])\mapsto\C([t_{N-2},t_{N-1}])$ defined by, for $t\in[t_{N-2},t_{N-1}]$,
\begin{align*}
{\cal J}^{N-1}f(t)&:=\int_t^{t_{N-1}}f(s)G(r,s,t)ds+\int_{t_{N-1}}^{T} f^N(s)G(r,s,t)ds+H(r,z,t).
\end{align*}
Similarly, we can see that ${\cal J}^{N-1}$ is a contraction mapping on $\C([t_{N-2},t_{N-1}])$ , and hence has a unique fixed point $f^{N-1}(t)\geq 0$ on $[t_{N-2},t_{N-1}]$ with $f^{N-1}(t_{N-1})=f^N(t_{N-1})$. Repeating this procedure, we can get the non-negative continuous function sequence $\{f^i\}_{i=1}^N$. Define the function $f:[0,T]\mapsto \R_+$ with $f(t)=f^i(t)$ for $t\in[t_{i-1},t_i]$. Then, the non-negative continuous $f$ is the unique fixed point of the mapping ${\cal J}$. If there exists some $t_0\in[0,T)$ such that $f(t_0)\leq0$, we introduce $\hat{t}:=\sup\{t\in[0,T];~f(t)\leq 0\}$. As $f(T)>0,f(t_0)\leq 0$ and $f\in\C([0,T])$, we have $\hat{t}\in[t_0,T)$, and hence
\begin{align*}
f(\hat{t})=\int_{\hat{t}}^T f(s)G(r,s,t_0)ds+H(r,z,\hat{t})\leq 0,
\end{align*}
which contradicts with the facts that $f,G\geq 0$, $H>0$ on $(\hat{t},T]$. Thus, we complete the proof.
\hfill$\blacksquare$\\

{\it Proof of Lemma \ref{lem:continuity-f}:}\quad 
Fix $(r_1,t_1)\in\R_+\times [0,T]$. For any $\epsilon\in (0,1)$ and $(r_2,t_2)\in {\cal M}^{(r_1,t_1)}_{\epsilon}:=\{(r,t)\in \R_+\times [0,T];~|r-r_1|+|t-t_1|\leq \epsilon\}$, we have
\begin{align}\label{continuity} 
|f(t_1;r_1)-f(t_2;r_2)|&\leq |f(t_1;r_1)-f(t_2;r_1)|+|f(t_2;r_1)-f(t_2;r_2)|.
\end{align}
It follows from Lemma \ref{lem:fixpoint-r} that
\begin{align}\label{continuity-1}
|f(t_1;r_1)-f(t_2;r_1)|&\leq \left|\int_{t_1}^{t_2} f(s;r_1)G(r_1,s,t_1)ds\right|\left|H(r_1,z,t_1)-H(r_1,z,t_2)\right|\nonumber\\
&\leq \max_{s\in[t_1,t_2]}|f(s;r_1)G(r_1,s,t_1)||t_2-t_1|+\left|H(r_1,z,t_1)-H(r_1,z,t_2)\right|.
\end{align}
Utilizing the continuity of $(r,s,t)\mapsto G(r,s,t)$, $(r,t)\mapsto H(r,z,t)$ and  $s\mapsto f(s;r)$, we deduce from \eqref{continuity-1} that
\begin{align}\label{continuity-2}
\lim_{\epsilon \to0}|f(t_1;r_1)-f(t_2;r_1)|=0.
\end{align}
On the other hand, by Lemma \ref{lem:fixpoint-r} again, for $t\in[0,T]$,  we have
\begin{align}\label{continuity-3}
&|f(t;r_1)-f(t;r_2)|\nonumber\\
&\leq\Bigg|\int_{t}^T (f(s;r_1)-f(s;r_2))G(r_2,s,t)+(G(r_1,s,t)-G(r_2,s,t))f(s;r_1)ds\Bigg|\nonumber\\
&\quad+\left|H(r_1,z,t)-H(r_2,z,t)\right|\nonumber\\
&\leq\int_{t}^T |f(s;r_1)-f(s;r_2)|G(r_2,s,t)ds+\sup_{(s,t)\in[0,T]^2}|(G(r_1,s,t)-G(r_2,s,t))f(s;r_1)|T\nonumber\\
&\quad+\sup_{s\in[0,T]}\left|H(r_1,z,t)-H(r_2,z,t)\right|\nonumber\\
&\leq K_1\int_{t}^T |f(s;r_1)-f(s;r_2)|ds+K_2(r_1,r_2),
\end{align}
where the positive constant 
\begin{align*}
K_1:=\sup_{(r,t_2)\in {\cal M}_1^{(r_1,t_1)},~s\in[t,T]}G(r,s,t)<+\infty,
\end{align*}
 and the function $K_2(r_1,r_2)$ is given by
\begin{align*}
K_2(r_1,r_2):=&\sup_{(s,t)\in[0,T]^2}|(G(r_1,s,t)-G(r_2,s,t))f(s;r_1)|T\\
&+\sup_{s\in[0,T]}\left|H(r_1,z,s)-H(r_2,z,s)\right|,
\end{align*}
which satisfies $\lim_{\epsilon \to 0}K_2(r_1,r_2)\to 0$. Since \eqref{continuity-3} holds for any $t\in[0,T]$, replacing $t$ with $T-t$, we deduce that
\begin{align*}
&\left|f(T-t;r_1)-f(T-t;r_2)\right|\\
&\leq K_1\int_{T-t}^T |f(s;r_1)-f(s;r_2)|ds+K_2(r_1,r_2)\nonumber\\
&=K_1\int_{0}^t |f(T-s;r_1)-f(T-s;r_2)|ds+K_2(r_1,r_2).
\end{align*}
It follows from the Gronwall's inequality that
\begin{align}\label{continuity-4}
&\lim_{\epsilon \to 0}\left|f(T-t;r_1)-f(T-t;r_2)\right|\qquad\leq \lim_{\epsilon \to 0}K_1 e^{K_2(r_1,r_2)T}=0.
\end{align}
Taking $t=T-t_2$ in \eqref{continuity-4} and noting $\lim_{\epsilon \to 0}|f(t_2;r_1)-f(t_2;r_2)|=0$, together with \eqref{continuity} and \eqref{continuity-2}, we conclude that the function $(r,t)\mapsto f(t;r)$ is continuous. \hfill$\blacksquare$

\begin{proof}[Proof of Lemma \ref{lem:approximate}]
We first claim that $\lim_{n\to \infty}\mu^{i,n}\Ex[\theta_s^{*,i,n}]=f^*(s)$ for $s\in[t,T]$. To this purpose, we introduce the following auxiliary stochastic control problem given by, for $(t,x_0,z_0)\in[0,T]\times\R_+^2$,
\begin{align}\label{eq:auxiliary}
\tilde{u}^{i}(t,x_0,z_0):=\sup_{\theta\in \mathbb{U}_t^(x_0,z_0)}\Ex\left[  -\int_t^{T} e^{-\rho (s-t)}d\tilde{L}_s^{i}\big|\tilde{X}_t^{i}=x_0,\tilde{Z}_t^{i}=z_0 \right].
\end{align}
The value function \eqref{eq:auxiliary} is defined identically to \eqref{state-X-i} except that $(\mu^{i,n}, \sigma^{i,n}, \mu_Z^{i,n}, \sigma_Z^{i,n}, \lambda^{i,n})$ are replaced by $(\mu, \sigma, \mu_Z, \sigma_Z, \lambda)$. One can directly see that the optimal portfolio strategy $\tilde{\theta}^{*,i}=(\tilde{\theta}^{*,i}_s)_{s\in[t,T]}$ of problem \eqref{eq:auxiliary} admits the expression \eqref {optimal-portfolio-auxiliary} with $(\mu^{i,n}, \sigma^{i,n}, \mu_Z^{i,n}, \sigma_Z^{i,n}, \lambda^{i,n})$ being replaced by $(\mu, \sigma, \mu_Z, \sigma_Z, \lambda)$.

Using the assumption that $(\mu^{i,n}, \sigma^{i,n}, \mu_Z^{i,n}, \sigma_Z^{i,n}, \lambda^{i,n})$ converge uniformly to a common limit $(\mu, \sigma, \mu_Z, \sigma_Z, \lambda)\in(0,\infty)^4\times[0,1]$ as $n \to \infty$, it follows that, for any $s\in[t,T]$, $\theta_s^{*,i,n}$ converges to $\tilde{\theta}_s^{*,i}$ almost surely as $n\to\infty$. Since $\theta_s^{*,i,n}\in\mathbb{U}_t(x_0,z_0)$, the family $(\theta_s^{*,i,n})_{n\geq 1}$ is uniformly integrable. This  yields that, for any $s\in[t,T]$,
\begin{align*}
\lim_{n\to \infty}\mu^{i,n}\Ex\left[\theta_s^{*,i,n}\right]=\mu\Ex\left[\tilde{\theta}_s^{*,i}\right]=\mu\Ex\left[\theta_s^{*}\right]=f^*(s),
\end{align*}
where, the 2nd equality holds true because $\tilde{\theta}_s^{*,i}$ has the same distribution as $\theta^{*}$ given in Theorem 4.8. Now, by applying It{\^ o}'s formula to $|X_s^{*,i,n}-\bar{X}_s^{*,i,n}|^2$ and taking expectation, we obtain 
\begin{align}\label{eq:ineq-X}
\Ex\left[\left|X_s^{*,i,n}-\bar{X}_s^{*,i,n}\right|^2\right]&=2\Ex\left[\int_t^s  \left|X_\ell^{*,i,n}-\bar{X}_\ell^{*,i,n}\right|\left|\lambda^{i,n} f^*(\ell)- \frac{\lambda^{i,n}}{n}\sum_{j=1}^n \mu^{j,n}\theta^{*,j,n}_\ell\right|d\ell\right]\nonumber\\
&\leq \int_t^s  \Ex\left[\left|X_\ell^{*,i,n}-\bar{X}_\ell^{*,i,n}\right|^2\right]d\ell +\int_t^s \Ex\left[\left|f^*(\ell)- \frac{1}{n}\sum_{j=1}^n \mu^{j,n}\theta^{*,j,n}_\ell\right|^2\right]d\ell\nonumber\\
&:=\int_t^s  \Ex\left[\left|X_\ell^{*,i,n}-\bar{X}_\ell^{*,i,n}\right|^2\right]d\ell +\int_t^sF_n(\ell)d\ell.
\end{align}
Applying the Gronwall's lemma, we arrive at
\begin{align}\label{eq:ineq-X-2}
\Ex\left[\left|X_s^{*,i,n}-\bar{X}_s^{*,i,n}\right|^2\right]\leq 
e^{s-t}\int_t^sF_n(\ell)d\ell.
\end{align}
Note that, it holds that, for any $\ell\in[t,T]$,
\begin{align}\label{eq:F}
F_n(\ell)=&\Ex\left[\left|f^*(\ell)- \frac{1}{n}\sum_{j=1}^n \mu^{j,n}\theta^{*,j,n}_{\ell}\right|^2\right]\nonumber\\
&\leq 2 \Ex\left[\left|f^*(\ell)- \frac{1}{n}\sum_{j=1}^n \mu^{j,n}\Ex\left[\theta^{*,j,n}_\ell\right]\right|^2\right]+2 \Ex\left[\left|\frac{1}{n}\sum_{j=1}^n \mu^{j,n}\Ex\left[\theta^{*,j,n}_\ell\right]-\sum_{j=1}^n \mu^{j,n}\theta^{*,j,n}_\ell\right|^2\right]\nonumber\\
&\leq  \frac{2}{n^2}\sum_{j=1}^n\left|f^*(\ell)-  \mu^{j,n}\Ex[\theta^{*,j,n}_\ell]\right|^2+\frac{2}{n^2}\sum_{j=1}^n \Ex\left[\left| \mu^{j,n}\Ex\left[\theta^{*,j,n}_\ell\right]- \mu^{j,n}\theta^{*,j,n}_\ell\right|^2\right]\\
&\leq  \frac{2}{n^2}\sum_{j=1}^n\left|f^*(\ell)-  \mu^{j,n}\Ex\left[\theta^{*,j,n}_\ell\right]\right|^2+\frac{4}{n^2}\sum_{j=1}^n \left[ (\mu^{j,n})^2\Ex\left[\theta^{*,j,n}_\ell\right]^2+(\mu^{j,n})^2\Ex\left[|\theta^{*,j,n}_\ell|^2\right]\right].\nonumber
\end{align}
Using $\lim_{n\to\infty} \mu^{i,n}\mathbb{E}[\theta_s^{*,i,n}] = f^*(s)$ for $s\in[t,T]$, $\lim_{n\to\infty}\mu^{i,n} = \mu$, and the uniform integrability of $(\theta^{*,j,n})_{n\geq1}\subset\mathbb{U}_t(x_0,z_0)$, we deduce from \eqref{eq:F} that $\lim_{n\to\infty} F_n(\ell) = 0$ for any $\ell\in[t,T]$. From \eqref{eq:ineq-X-2} and Dominated Convergence Theorem (DCT), it follows that
\begin{align}\label{eq:ineq-X-3}
\sup_{s\in[t,T]}\Ex\left[\left|X_s^{*,i,n}-\bar{X}_s^{*,i,n}\right|^2\right]\leq 
&e^{T-t}\int_t^TF_n(\ell)d\ell\to 0,~~ \text{as}~n\to \infty.
\end{align}
Consequently, we have, as $n\to\infty$,
{\small\begin{align*}
\sup_{s\in[t,T]}\Ex\left[\left|L_s^{*,i,n}-\bar{L}_s^{*,i,n}\right|\right]&\leq \sup_{s\in[t,T]}\Ex\left[\left|X_s^{*,i,n}-\bar{X}_s^{*,i,n}\right|\right]+\int_t^T\Ex\left[\left|f^*(\ell)- \frac{\lambda^i}{n}\sum_{j=1}^n \mu^{j,n}\theta^{*,j,n}\ell\right|\right]d\ell\nonumber\\
&\leq \sup_{s\in[t,T]}\Ex\left[\left|X_s^{*,i,n}-\bar{X}_s^{*,i,n}\right|\right]+\int_t^T \sqrt{F_n(\ell)}d\ell.
\end{align*}}In a similar manner, we can establish \eqref{eq:approximate-2}, which completes the proof of the lemma.
\end{proof}

\noindent\textbf{Acknowledgements}  L. Bo and Y. Huang are supported by National Natural Science of Foundation China (No. 12471451), Natural Science Basic Research Program of Shaanxi (No. 2023-JC-JQ-05), Shaanxi Fundamental Science Research Project for Mathematics and Physics (No. 23JSZ010) and Fundamental Research Funds for the Central Universities (No. 20199235177). X. Yu is supported by the Hong Kong RGC General Research Fund (GRF) under grant no. 15306523 and by the Hong Kong Polytechnic University research grant under no. P0045654.

\end{document}